\newcommand{\+}{\nobreakdash-}
\renewcommand{\:}{\colon}
\renewcommand{\;}{,\medspace}
\renewcommand{\.}{\text{$\mskip .5\thinmuskip$}}
\renewcommand{\le}{\leqslant}
\renewcommand{\ge}{\geqslant}
\newcommand{\rarrow}{\longrightarrow}
\newcommand{\larrow}{\longleftarrow}
\newcommand{\birarrow}{\rightrightarrows}
\newcommand{\ot}{\otimes}
\newcommand{\ocn}{\odot}
\newcommand{\st}{\star}
\newcommand{\bu}{{\text{\smaller\smaller$\scriptstyle\bullet$}}}
\newcommand{\lrarrow}{\.\relbar\joinrel\relbar\joinrel\rightarrow\.}
\newcommand{\B}{{\mathcal B}}
\newcommand{\C}{{\mathcal C}}
\newcommand{\D}{{\mathcal D}}
\renewcommand{\O}{{\mathcal O}}
\newcommand{\M}{{\mathcal M}}
\newcommand{\J}{{\mathcal J}}
\newcommand{\fR}{{\mathfrak R}}
\newcommand{\boR}{{\mathbb R}}
\newcommand{\boL}{{\mathbb L}}
\newcommand{\Z}{{\mathbb Z}}
\newcommand{\s}{{\mathbf s}}
\renewcommand{\b}{{\mathsf{b}}}
\newcommand{\co}{{\mathsf{co}}}
\newcommand{\ctr}{{\mathsf{ctr}}}
\newcommand{\abs}{{\mathsf{abs}}}
\newcommand{\inj}{{\mathsf{inj}}}
\newcommand{\proj}{{\mathsf{proj}}}
\newcommand{\sA}{{\mathsf A}}
\newcommand{\sB}{{\mathsf B}}
\newcommand{\sC}{{\mathsf C}}
\newcommand{\sD}{{\mathsf D}}
\newcommand{\sE}{{\mathsf E}}
\newcommand{\sJ}{{\mathsf J}}
\newcommand{\sP}{{\mathsf P}}
\newcommand{\Hot}{{\mathsf{Hot}}}
\newcommand{\modl}{{\operatorname{\mathsf{--mod}}}}
\newcommand{\comodl}{{\operatorname{\mathsf{--comod}}}}
\newcommand{\discr}{{\operatorname{\mathsf{--discr}}}}
\newcommand{\contra}{{\operatorname{\mathsf{--contra}}}}
\newcommand{\qcoh}{{\operatorname{\mathsf{--qcoh}}}}
\newcommand{\ctrh}{{\operatorname{\mathsf{--ctrh}}}}
\newcommand{\tors}{{\operatorname{\mathsf{-tors}}}}
\newcommand{\ctra}{{\operatorname{\mathsf{-ctra}}}}
\newcommand{\com}{{\operatorname{\mathsf{-com}}}}
\newcommand{\adj}{{\operatorname{\mathsf{-adj}}}}
\DeclareMathOperator{\Spec}{Spec}
\DeclareMathOperator{\Hom}{Hom}
\DeclareMathOperator{\Ext}{Ext}
\DeclareMathOperator{\Tor}{Tor}
\DeclareMathOperator{\Ctrtor}{Ctrtor}
\DeclareMathOperator{\Tel}{Tel}
\DeclareMathOperator{\cone}{cone}
\DeclareMathOperator{\cocone}{cocone}
\DeclareMathOperator{\im}{im}
\DeclareMathOperator{\id}{id}
\DeclareMathOperator{\Id}{id}
\DeclareMathOperator{\pd}{pd}
\DeclareMathOperator{\fd}{fd}
\DeclareMathOperator{\cfd}{cfd}
\DeclareMathOperator{\fHom}{\mathfrak{Hom}}
\newcommand{\Section}[1]{\bigskip\section{#1}\medskip}
\theoremstyle{plain}
\newtheorem{thm}{Theorem}[section]
\newtheorem{lem}[thm]{Lemma}
\newtheorem{prop}[thm]{Proposition}
\newtheorem{cor}[thm]{Corollary}
\theoremstyle{definition}
\newtheorem{ex}[thm]{Example}
\newtheorem{rem}[thm]{Remark}
\begin{document}

\title{Dedualizing complexes and MGM duality}

\author{Leonid Positselski}

\address{Department of Mathematics, Faculty of Natural Sciences,
University of Haifa, Mount Carmel, Haifa 31905, Israel; and
\newline\indent Laboratory of Algebraic Geometry, National Research
University Higher School of Economics, Moscow 117312; and
\newline\indent Sector of Algebra and Number Theory, Institute for
Information Transmission Problems, Moscow 127051, Russia}

\email{posic@mccme.ru}

\begin{abstract}
 We show that various derived categories of torsion modules
and contramodules over the adic completion of a commutative ring
by a weakly proregular ideal are full subcategories of the related
derived categories of modules.
 By the work of Dwyer--Greenlees and Porta--Shaul--Yekutieli, this
implies an equivalence between the (bounded or unbounded)
conventional derived categories of the abelian categories of torsion
modules and contramodules.
 Over the adic completion of a commutative ring by an arbitrary
finitely generated ideal, we obtain an equivalence between
the derived categories of complexes of modules with torsion and
contramodule cohomology modules.
 We also define two versions of the notion of
a \emph{dedualizing complex} over the adic completion of
a commutative ring, one for an ideal with an Artinian quotient ring
and the other one for a weakly proregular ideal, and use these
to construct equivalences between the conventional as well as
certain exotic derived categories of the abelian categories
of torsion modules and contramodules.
 The philosophy of derived co-contra correspondence is discussed 
in the introduction.
\end{abstract}

\maketitle

\tableofcontents

\setcounter{section}{-1}
\section{Introduction}
\medskip

\setcounter{subsection}{-1}
\subsection{{}}
 In its simplest and purest form, the \emph{comodule-contramodule
correspondence} is a natural equivalence between the additive
categories of injective left comodules and projective left
contramodules over the same coassociative coalgebra $\C$ over
a field~$k$.
 The equivalence of additive categories induces an equivalence
between the homotopy categories of complexes of injective comodules
and projective contramodules.
 One would like to view this equivalence as an equivalence between
the derived categories of left comodules and left contramodules
over $\C$, using complexes of injective comodules and projective
contramodules as resolutions.
 However, the former are right resolutions and the latter are left
ones, so one has to work with doubly unbounded complexes.
 The categories of unbounded complexes of injective or projective
objects are well-known to differ from the unbounded derived
categories as the latter are conventionally defined.
 The reason is that an unbounded complex of injectives or projectives
may be acyclic, yet not contractible.

 Hence the relevance of the concepts of the \emph{derived categories
of the second kind}, or the \emph{coderived} and \emph{contraderived}
categories, in the comodule-contramodule correspondence constructions.
 In well-behaved situations, the coderived category $\sD^\co(\sA)$
of an abelian or exact category $\sA$ is equivalent to the homotopy
category $\Hot(\sA_\inj)$ of complexes of injective objects in $\sA$,
while the contraderived category $\sD^\ctr(\sB)$ of a category $\sB$
is equivalent to the homotopy category $\Hot(\sB_\proj)$ of complexes
of projective objects in~$\sB$.
 Thus the \emph{derived co-contra correspondence} over a coalgebra
$\C$ takes the form of an equivalence between the coderived category
of the abelian category of left $\C$\+comodules $\C\comodl$ and
the contraderived category of the abelian category of left
$\C$\+contramodules $\C\contra$,
$$
 \sD^\co(\C\comodl)\simeq\sD^\ctr(\C\contra).
$$
 Certain acyclic complexes of comodules and contramodules survive
in the coderived and contraderived categories, and an acyclic
complex is sometimes assigned to an irreducible object by
the derived co-contra correspondence~\cite[Sections~0.2.2, 0.2.5,
and~0.2.6\+-7]{Psemi} (see also~\cite[Sections~4.4 and~5.2]{Pkoszul}).

\subsection{{}}
 In more complicated relative situations mixing algebra and coalgebra
features, derived co-contra correspondence theories can be often
developed using the following guiding principles~\cite{Psli}:
\begin{enumerate}
\renewcommand{\theenumi}{\roman{enumi}}
\item depending on whether one's abelian/exact category is a category
of comodule-like or contramodule-like objects, one takes the coderived
or the contraderived category along the coalgebra variables;
\item one takes the conventional unbounded derived category along
the ring or algebra variables;
\item over a ring or coalgebra of finite homological dimension, there
is no difference between the derived, coderived, and contraderived
categories, so one does not have to pay attention to the distinction.
\end{enumerate}
 
 The rules (i\+iii) are sufficient to build most of the derived
the comodule-contra\-module and semimodule-semicontramodule
correspondence theorems of the book~\cite{Psemi}
(see~\cite[Sections~0.3.7, 5.4\+-5.5, and~6.3]{Psemi}).
 However, there are several simple and important situations which they
do not cover.

\subsection{{}}
 In the papers~\cite{Jor,Kra}, the homotopy categories of unbounded
complexes of projective and injective modules over certain coherent
or Noetherian rings were studied; and in~\cite{IK}, an equivalence
between the homotopy categories of complexes of projective and
injective modules over a \emph{ring} (or two rings, in
the noncommutative case) \emph{with a dualizing complex} was obtained.
 In the paper~\cite{Neem} and the dissertation~\cite{Murf}, these
results were extended to complexes of flat and injective quasi-coherent
sheaves on a Noetherian scheme; and in
the paper~\cite[Section~2.5]{EP}, to matrix factorizations.
 Finally, in~\cite[Section~5.7]{Pcosh} this duality is formulated as
a commutative diagram of equivalences between \emph{four} exotic derived
categories of quasi-coherent sheaves and contraherent cosheaves on
a semi-separated Noetherian scheme.

 The related piece of philosophy appears to look as follows.
 A coalgebra has been ``dualized'' already, being a dual thing to
an algebra or a ring; so a coalgebra $\C$ is a \emph{dualizing
complex over itself}.
 The conventional derived category of modules over a ring is
tautologically equivalent to itself, but constructing an equivalence
between the coderived and contraderived categories of modules over
a (say, commutative Noetherian) ring requires a dualizing complex. 
 Having a dualizing complex over a ring makes it ``more like
a coalgebra''.
 Therefore, 
\begin{enumerate}
\renewcommand{\theenumi}{\roman{enumi}}
\setcounter{enumi}{3}
\item given a dualizing complex for a set of ring or algebra variables,
one can use the coderived and the contraderived category along these
variables on the two sides of the derived co-contra correspondence.
\end{enumerate}

\subsection{{}} \label{naive-quasi-contrah}
 The opposite situation, when one considers the conventional unbounded
derived categories of comodules and contramodules, is not as
well-studied.
 The relation between the conventional derived categories of
DG\+comodules and DG\+contramodules over an arbitrary DG\+coalgebra
over a field is discussed in (particularly, the postpublication
\texttt{arXiv} version of) \cite[Section~5.5]{Pkoszul}, but it does not
have the familiar form of an equivalence of triangulated categories.

 In~\cite[Section~4.6]{Pcosh}, we construct an equivalence between
the conventional (bounded or unbounded; and also absolute, etc.)\
derived categories of the abelian category of quasi-coherent sheaves
and the exact category of contraherent cosheaves on a quasi-compact
semi-separated scheme~$X$,
$$
 \sD^\st(X\qcoh)\simeq\sD^\st(X\ctrh).
$$
 An explanation is that, generally, a scheme ``mixes the ring and
coalgebra variables'' in such a way that the ring(s) are responsible
for the pieces being glued, while the coalgebra (or, rather, coring)
governs the gluing procedure~\cite{KR}.
 In a quasi-compact quasi-separated scheme, the gluing procedure
has finite homological dimension (the quasi-coherent sheaf cohomology
being a functor of finite homological dimension); so the rule~(iii)
``along the gluing variables'' applies.
 Still, one additionally has to explain how to pass from the sheaves to
the cosheaves and back.
 Over a scheme, one takes the contraherent $\fHom_X$ from or
the contratensor product $\ocn_X$ with the structure sheaf $\O_X$ of
the scheme~$X$ \cite[Sections~2.5\+-6]{Pcosh}.

\subsection{{}}
 On the other hand, there is a stream of literature discussing torsion,
completion, and duality theories for modules or sheaves over formal
schemes.
 From the point of view elaborated in the above discussion, there seem
to be \emph{two} such duality theories that need to be property
distinguished from each other.

 One of them is concerned with dualizing complexes on formal
schemes~\cite[Section~5]{Yek0} and, being formulated in the generality
of unbounded complexes of infinitely generated modules, leads to
a covariant equivalence between derived categories of the second kind,
extending the results of~\cite{IK,Murf,EP} and~\cite[Section~5.7]{Pcosh}
to the realm of formal schemes.
 For affine Noetherian formal schemes, this
``covariant Serre--Grothendieck duality'' is formulated
in~\cite[Section~C.1]{Pcosh} (see~\cite[Section~C.5]{Pcosh} for
a noncommutative version); and for ind-affine ind-Noetherian
ind-schemes (the ind-spectra of pro-Noetherian topological rings),
in~\cite[Section~D.2]{Pcosh}.

 To demonstrate a precise assertion here, given a dualizing complex
$\D^\bu$ for a projective system $R_0\larrow R_1\larrow R_2\larrow\dotsb$
of Noetherian commutative rings and surjective morphisms between them
with the projective limit $\fR=\varprojlim_n R_n$, \
\cite[Theorem~D.2.7]{Pcosh}~claims that there is a natural equivalence
between the coderived category of discrete modules and the contraderived
category of contramodules over the topological ring~$\fR$,
$$
 \sD^\co(\fR\discr)\simeq\sD^\ctr(\fR\contra).
$$

 The other duality theory over adically complete rings and formal
schemes is known as the \emph{Matlis--Greenlees--May
duality}~\cite{Mat,DG,PSY}.
 The aim of the present paper is to formulate it explicitly as
an equivalence between the conventional derived categories of
the abelian categories of torsion modules and contramodules over
the adic completions of certain commutative rings.
 Thus the MGM duality is viewed as a species of the ``na\"\i ve
derived co-contra correspondence'' of~\cite[Section~4.6]{Pcosh}.

\subsection{{}}  \label{geometric-subcategories-equivalence}
 Let us devote a few paragraphs to a more substantive discussion of
the issues involved.
 Let $X$ be a scheme and $Z\subset X$ be a closed subscheme.
 Denote by $U=X\setminus Z$ the open complement to $Z$ in~$X$.
 Then the formal completion of $X$ along $Z$ can be viewed heuristically
as the complement to $U$ in $X$,
$$
 X_Z\sphat = X\setminus U.
$$
 In particular, the quasi-coherent torsion sheaves on $X_Z\sphat$ are,
almost by the definition, those quasi-coherent sheaves on $X$ whose
restrictions vanish on~$U$.

 Passing to the triangulated categories and denoting by $k\:U\rarrow X$
the open embedding morphism, we notice that under weak assumptions
on~$k$ the inverse image functor $k^*\:\sD(X\qcoh)\rarrow\sD(U\qcoh)$
between the derived categories of quasi-coherent sheaves on $X$ and $U$
has a right adjoint functor of derived direct image $\boR k_*\:
\sD(U\qcoh)\rarrow\sD(X\qcoh)$.
 The composition $k^*\circ\boR k_*$ is isomorphic to the identity
functor on $\sD(U\qcoh)$, so $k^*$~is a Verdier quotient functor and
the functor $\boR k_*$~is fully faithful.
 To be more precise, one would probably want to have the scheme $U$
quasi-compact and the scheme $X$ semi-separated, or otherwise
the scheme $X$ locally Noetherian, for the functor $\boR k_*$ to be
well-behaved; and under slightly stronger assumptions one can
prove existence of a Neeman extraordinary inverse image functor
$k^!\:\sD(X\qcoh)\rarrow\sD(U\qcoh)$ right adjoint to~$\boR k_*$
\cite{Neem0}.

 It follows immediately that the kernels of the functors~$k^*$
and~$k^!$ are two equivalent subcategories in $\sD(X\qcoh)$
(a purely algebraic description of this picture can be found
in~\cite[Section~8]{PSY}).
 Furthermore, the functor of inverse image of contraherent cosheaves
$k^!\:\sD(X\ctrh)\rarrow\sD(U\ctrh)$ has a left adjoint functor of
derived direct image $\boL k_!\:\sD(U\ctrh)\rarrow\sD(X\ctrh)$.
 The equivalences of derived categories $\sD(X\qcoh)\simeq\sD(X\ctrh)$
and $\sD(U\qcoh)\simeq\sD(U\ctrh)$ from
Section~\ref{naive-quasi-contrah} transform the functor $\boR k_*$ into
the functor $\boL k_!$, identifying the two functors~$k^!$ and providing
a rather explicit construction of Neeman's inverse
image~\cite[Section~4.8]{Pcosh}.

 This is not yet a promised equivalence between the derived categories
of a pair of abelian or exact categories, however.
  E.~g., one still has to prove that the kernel of the functor~$k^*$
is equivalent to the derived category $\sD(X\qcoh_{Z\tors})$ of
quasi-coherent torsion sheaves on $X_Z\sphat$, or that the triangulated
functor $\sD(X\qcoh_{Z\tors})\rarrow\sD(X\qcoh)$ induced by the embedding
of abelian categories $X\qcoh_{Z\tors}\rarrow X\qcoh$ is fully faithful.
 For bounded derived categories $\sD^\b$ or $\sD^+$ in place of $\sD$
and a Noetherian scheme $X$ this is not difficult, as one can use
the Artin--Rees lemma to show that injectives in $X\qcoh_{Z\tors}$ are
also injective in $X\qcoh$; for the unbounded derived categories,
the question appears to be more involved.
 One also has to identify the kernel of the functor~$k^!$ with
the derived category of the abelian or exact category of contramodules
or contraherent cosheaves of contramodules on~$X_Z\sphat$.

\subsection{{}}
 Let us now continue the discussion from a different angle.
 When the scheme $X$ is quasi-compact and quasi-separated, and its
open subscheme $U\subset X$ is quasi-compact, the above heuristics
suggest that the formal scheme $X_Z\sphat$, being the result of
``subtracting'' $U$ from $X$, has the homological dimension of its
``coalgebra variables'' bounded, approximately, by the sum of
the cardinality parameters of the open coverings of $X$ and $U$,
and consequently finite.
 To use another visual metaphor, one can say that the formal completion
of $X$ along $Z$ is obtained by cutting from $X$ a small tubular
neighborhood around~$Z$.
 The observation is that the scissors used to perform the cut tend
to have finite homological dimension.

 Once again, the Artin--Rees lemma, claiming that the completion functor
is exact, would seem to provide an even stronger assertion for finitely
generated modules, but we are interested in infinitely generated ones
(even over a Noetherian ring).
 The precise conditions needed to prove such a result are a more
technical issue.
 In the case of an affine scheme $X=\Spec R$ our heuristics seem to
suggest that the open subscheme $U=X\setminus Z$ should be
quasi-compact, i.~e., the ideal $I\subset R$ defining the closed
subscheme $Z\subset X$ should be finitely generated.
 In assumptions, admittedly, stronger than this, still weaker than
the Noetherianity of $R$, a pair of such homological
finite-dimensionality assertions on the comodule and contramodule
sides is indeed proven in~\cite[Corollaries~4.28 and~5.27]{PSY}.

\subsection{{}}
 No homological dimension condition, though, can automatically solve
the problem of constructing an equivalence between the derived
categories of two different abelian categories, even if these are
the categories of comodules and contramodules over the same
coalgebra-like algebraic structure.
 The piece of data that is missing here is what we call
a \emph{dedualizing complex}. 

 The coderived category of left comodules and the contraderived
category of left contramodules over a coalgebra over a field are
always equivalent to each other; but constructing an equivalence
between the conventional derived categories of comodules and
contramodules over a (co-Noetherian or cocoherent) coalgebra
requires a dedualizing complex.
 A ring is a dedualizing complex over itself; having a dedualizing
complex for a coalgebra makes it more like a ring.
 In other words,
\begin{enumerate}
\renewcommand{\theenumi}{\roman{enumi}}
\setcounter{enumi}{4}
\item given a dedualizing complex for a set of coalgebra variables,
one can have a ``naive'' derived co-contra correspondence with
the conventional derived categories of comodules and contramodules
along these variables on the two sides of a triangulated equivalence.
\end{enumerate}

 The definition of a dedualizing complex $\B^\bu$ is, approximately,
dual to that of a dualizing one.
 It has to be a finite complex of comodules or bicomodules.
 There being, generally speaking, no projective objects in comodule
categories, one cannot ask $\B^\bu$ to be a finite complex
\emph{of projective objects}, but it has to have \emph{finite
projective dimension} as a complex in the bounded derived category.
 It has to satisfy a unitality condition imposed on the graded ring
of its endomorphisms in the derived category. 
 Finally, there should be a finiteness condition similar or dual to
the coherence condition on the cohomology sheaves of a dualizing
complex on a Noetherian scheme.
 The latter one seems to be the hardest to formulate, and
the approaches may vary.

 Let us emphasize that the dualizing and the dedualizing complexes play
rather different roles in the respective theories.
 The purpose of the dualizing complex is to mitigate the infinite
homological dimension problem, bridging the gap between the coderived
and the contraderived category.
 The dedualizing complex is there to bridge the gap between
the abelian categories of comodules and contramodules.
 In both cases, the problem to be solved by the choice of a specific
complex appears due to a mismatch between the kind of abelian
categories being considered and the kind of derived category
constructions being applied.
 Between the coderived category of comodules and the contraderived
category of contramodules (say, over a coalgebra over a field),
there is no gap to be bridged.

\subsection{{}}
 We have yet to explain how a dedualizing complex on a formal scheme
is to be obtained.
 For dualizing complexes on algebraic varieties $X$ over a field~$k$,
the classical prescription is to consider the structure morphism
$p\:X\rarrow\Spec k$ and set $\D_X^\bu=p^+\O_{\Spec k}$, where
$p^+$~denotes the Deligne extraordinary inverse image functor
(i.~e., $f^+$~is equal to~$f^!$ for proper morphisms and to~$f^*$
for open embeddings).

 A similar rule works for formal completions of algebraic varieties:
given a scheme $X$ with a dualizing complex $\D_X^\bu$ and a closed
subscheme $Z\subset X$, a dualizing complex on the formal scheme
$X_Z\sphat$ can be constructed as the derived subcomplex with
set-theoretic supports $\D_{X_Z\sphat}^\bu=\boR i^!\D_X^\bu$,
where $i$~denotes the embedding $X_Z\sphat\rarrow X$.
 As to the \emph{dedualizing} complex on $X_Z\sphat$, it is produced
by applying the functor $\boR i^!$ to the structure sheaf
of the scheme~$X$,
$$
 \B_{X_Z\sphat}^\bu=\boR i^!\O_X,
 \qquad i\:X_Z\sphat\rarrow X.
$$
 Note that the formal completion $X_Z\sphat\subset X$ of a closed
subscheme $Z$ in a scheme $X$ can be considered both as a kind of
``closed subscheme'' or ``open subscheme'' in $X$, depending on
a point of view.
 The above construction of the dedualizing complex can be explained
by saying that one would ``prefer'' to use the $*$\+restriction of
quasi-coherent sheaves in order to obtain a dedualizing complex
on $X_Z\sphat$ from a dedualizing complex on $X$; but given that
a dedualizing complex on a formal scheme must be a complex of
quasi-coherent torsion sheaves, one is ``forced'' to use
the $!$\+restriction, which is a ``satisfactory substitution''
inasmuch as the natural morphism $X_Z\sphat\rarrow X$ can be viewed
as a species of open embedding.

 The complex $\B_{X_Z\sphat}^\bu$ for the formal completion of an affine
scheme $X=\Spec R$ along its closed subscheme $Z\subset X$
defined by a finitely generated ideal $I\subset R$ plays a crucial
role in the MGM duality papers~\cite{DG,PSY}.
 Traditionally, it is viewed as an object of the derived category of
$R$\+modules represented by an explicit Koszul/telescope complex of
infinitely generated free $R$\+modules $\Tel^\bu(R,\s)$ concentrated
in the cohomological degrees from~$0$ to~$m$, where
$\s=\{s_1,\dotsc,s_m\}$ is a chosen set of generators of the ideal~$I$,
and having $I$\+torsion cohomology modules.
 With the perspective of extending the MGM duality to nonaffine
schemes in mind, certainly it is preferable to work with torsion
modules and contramodules over the $I$\+adic completion of $R$ only,
avoiding the use of any other $R$\+modules such as the infinitely
generated free ones.
 Dependence on a fixed set of generators~$\s$ of the ideal $I$,
which may not exist globally along a closed subscheme $Z$ in
a nonaffine scheme $X$, is also undesirable.

\subsection{{}}  \label{description-of-results}
 Let us now describe the contents of this paper and our main results
in some detail.
 For a commutative ring $R$ with a \emph{weakly proregular} finitely
generated ideal $I\subset R$, \ Porta, Shaul, and Yekutieli construct
an equivalence between (what they call) the full subcategories of
\emph{cohomologically $I$\+torsion complexes}
$\sD(R\modl)_{I\tors}\subset\sD(R\modl)$ and
\emph{cohomologically $I$\+adically complete complexes}
$\sD(R\modl)_{I\com}\subset\sD(R\modl)$ in the unbounded derived
category of $R$\+modules $\sD(R\modl)$,
\begin{equation}  \label{subcategories-equivalent}
 \sD(R\modl)_{I\tors}\simeq\sD(R\modl)_{I\com}.
\end{equation}
\cite[Theorems~1.1 and~7.11]{PSY}.
 Moreover, they consider the full abelian subcategory
of $I$\+torsion $R$\+modules $R\modl_{I\tors}\subset R\modl$
in the abelian category of $R$\+modules, and show
that the full subcategory of cohomologically $I$\+torsion
complexes coincides with the full subcategory of complexes
with $I$\+torsion cohomology modules in $\sD(R\modl)$,
$$
 \sD(R\modl)_{I\tors}=\sD_{I\tors}(R\modl)\subset\sD(R\modl)
$$
\cite[Corollary~4.32]{PSY}; a similar result was earlier
obtained in~\cite[Proposition~6.12]{DG}.

 Our first result in Section~\ref{derived-torsion-modules} is that,
for a weakly proregular finitely generated ideal $I$ in a commutative
ring $R$, the derived category of the abelian category of $I$\+torsion
$R$\+modules is a full subcategory in $\sD(R\modl)$ coinciding
with the full subcategory of complexes with $I$\+torsion
cohomology modules,
$$
 \sD_{I\tors}(R\modl)=\sD(R\modl_{I\tors})\subset\sD(R\modl).
$$
 Furthermore, in Section~\ref{derived-contramodules} we define
the full abelian subcategory of \emph{$I$\+contramodules} (or
\emph{$I$\+contramodule $R$\+modules}) $R\modl_{I\ctra}\subset
R\modl$ and show that the derived category of $I$\+contramodule
$R$\+modules is a full subcategory of the derived category of
$R$\+modules,
$$
 \sD(R\modl_{I\ctra})\subset\sD(R\modl).
$$
 Moreover, the full subcategory of cohomologically $I$\+adically
complete complexes coincides with the full subcategory of complexes
with $I$\+contramodule cohomology modules in $\sD(R\modl)$ and with
(the image of) the derived category of the abelian category
of $I$\+contramodule $R$\+modules,
$$
 \sD(R\modl)_{I\com}=\sD_{I\ctra}(R\modl)=\sD(R\modl_{I\ctra}).
$$
(concerning the first equality, cf.~\cite[Proposition~6.15]{DG}).

 Combining these results, we obtain a natural equivalence between
the (bounded or unbounded) conventional derived categories of
the abelian categories of $I$\+torsion and $I$\+contramodule
$R$\+modules,
\begin{equation} \label{derived-categories-equivalent}
 \sD^\st(R\modl_{I\tors})\simeq\sD^\st(R\modl_{I\ctra}).
\end{equation}

 Let us point out that the equivalence of triangulated
subcategories~\eqref{subcategories-equivalent} in the derived
category $\sD(R\modl)$ can be actually established for any
finitely generated ideal $I$ in a commutative ring $R$, as it
follows from the above discussion in
Section~\ref{geometric-subcategories-equivalence}.
 This was shown already by Dwyer and Greenlees in~\cite[Theorem~2.1,
Section~4.1, and Section~6]{DG}, and we explain this anew in our
Section~\ref{duality-theorem-secn}.
 In fact, we prove that for any finitely generated ideal $I\subset R$
the full subcategories of complexes with torsion and contramodule
cohomology modules are naturally equivalent,
\begin{equation}
 \sD^\st_{I\tors}(R\modl)\simeq\sD^\st_{I\ctra}(R\modl).
\end{equation}
 However, the weak proregularity condition appears to be essential
for constructing an equivalence between the derived categories of
the abelian categories $R\modl_{I\tors}$ and $R\modl_{I\ctra}$ in
the formula~\eqref{derived-categories-equivalent}.

\subsection{{}}
 Notice that the formal completions of algebraic varieties over
a field~$k$ at their closed points defined over~$k$ are precisely
the ind-spectra of finitely cogenerated conilpotent cocommutative
coalgebras over~$k$.
 (See~\cite[Section~I.6]{Dem} for a discussion of what we would
now call the equivalence between the categories of ind-finite
ind-schemes over a field and cocommutative coalgebras over
this field.)
 Thus coalgebras occur in connection with closed points on
algebraic varieties, while the adic completions with respect to
ideals correspond to arbitrary closed subsets.

 As the above discussion in this introduction suggests, the notion of
a dedualizing complex in the case of (not even necessarily
cocommutative, but coassociative) coalgebras over a field is easier
to approach than for adic completions of commutative rings.
 In fact, thinking about the coalgebra case was an important
stepping stone for the development of the author's ideas about
the subject.

 Still, from the point of view of many a reader the necessity to
delve into an additional, largely unrelated background and recall
the basics of coalgebras on the way to the desired definition
from commutative algebra may feel like an unnecessary burden.
 Therefore, we decided to move our treatment of the MGM duality
for coalgebras to a separate paper~\cite{Pmc}, while restricting
ourselves here to a brief mention of this theory and its main
result in this section of the introduction.

 The definition of a dedualizing complex as a complex of bicomodules
over two coalgebras satisfying a list of conditions is presented
in~\cite{Pmc}.
 Given a dedualizing complex $\B^\bu$ for a pair of cocoherent
coalgebras $\C$ and $\D$, an equivalence between the bounded or
unbounded, conventional or absolute derived categories of left
comodules over $\C$ and left contramodules over $\D$ is obtained, 
$$
 \sD^\st(\C\comodl)\simeq\sD^\st(\D\contra).
$$

 Instead of the coalgebra story, we have included in this paper
an intermediate Section~\ref{dedualizing-artinian-quotient}, where
a theory very similar to the MGM duality for coalgebras is developed
in the case of a Noetherian ring $R$ with an ideal $I$ such that
the quotient ring $R/I$ is Artinian (cf.~\cite[Section~1.1]{Pweak}).
 This assumption, even though very restrictive, allows to give
a conceptually clear definition of a dedualizing complex, which
can serve as a prototype for the more clumsy condition applicable
in the general case.

 In the final Section~\ref{dedualizing-weakly-proregular}, we work out
the dedualizing complexes-based approach to MGM duality theory
over the adic completions of commutative rings by arbitrary
weakly proregular finitely generated ideals.
 Even though our definition of a dedualizing complex in this setting
is not as neat as in the cases of two coalgebras or an ideal with
the Artinian quotient ring, it allows to obtain a second proof of
the equivalence of derived
categories~\eqref{derived-categories-equivalent}.
 Moreover, while the proof based on the arguments of
Section~\ref{duality-theorem-secn} applies to the conventional
derived categories only, the approach in
Section~\ref{dedualizing-weakly-proregular} produces an equivalence
of the absolute derived categories as well.

 A brief summary of the definitions of exotic derived categories
used in the main body of the paper is included in
Appendix~\ref{exotic-derived}.
 A formalism of derived functors needed for the constructions of
the last two sections is developed in
Appendix~\ref{derived-finite-homol-dim}.

 For a general discussion of the abelian categories of contramodules,
we refer to the survey paper~\cite{Prev}.
 In the role of an introduction to the coderived, contraderived and
absolute derived categories the reader can use~\cite[Sections~0.2
and~2.1]{Psemi} or, better yet, \cite[Sections~3\+-5]{Pkoszul};
for more advanced results, one can look
into~\cite[Sections~1.3\+-1.6]{EP} or~\cite[Appendix~A]{Pcosh}.
 An overview exposition on the derived comodule-contramodule
correspondence phenomenon can be found in the presentation~\cite{Psli}.

\subsection*{Acknowlegdement}
 The author is grateful to all the people who brought the MGM duality
to his attention over the several recent years, including
Sergey Arkhipov, Joseph Bernstein, and Amnon Yekutieli.
 This work was started when I~was visiting Ben Gurion University of
the Negev in Be'er Sheva in the Summer of 2014 and continued when
I~was supported by a fellowship from the Lady Davis Foundation at
the Technion in October 2014--March 2015.
 I~also wish to thank the anonymous referee for several helpful
suggestions.

\Section{Derived Category of Torsion Modules}
\label{derived-torsion-modules}

 Let $R$ be a commutative ring and $I\subset R$ be an ideal.
 An $R$\+module $M$ is said to be \emph{$I$\+torsion} if for every
pair of elements $s\in I$ and $x\in M$ there exists an integer
$n\ge1$ such that $s^nx=0$ in~$M$.
 The full subcategory of $I$\+torsion submodules in $R\modl$ is
denoted by $R\modl_{I\tors}$.
 Clearly, the subcategory $R\modl_{I\tors}$ is closed under
the operations of passage to submodules, quotient modules,
extensions, and infinite direct sums in $R\modl$.
 Therefore, $R\modl_{I\tors}$ is an abelian category with exact functors
of infinite direct sum and its embedding $R\modl_{I\tors}\rarrow
R\modl$ is an exact functor preserving the infinite direct sums.

 Following~\cite{PSY}, we denote by $\Gamma_I(M)$ the maximal
$I$\+torsion submodule in an arbitrary $R$\+module~$M$.
 The functor $\Gamma_I\:R\modl\rarrow R\modl_{I\tors}$ is left 
exact; it is the right adjoint functor to the embedding functor
$R\modl_{I\tors}\rarrow R\modl$.

 Assume that the ideal $I$ is finitely generated.
 Then an $R$\+module $M$ is $I$\+torsion if and only if for every
element $x\in M$ there exists $n\ge1$ such that $I^nx=0$ in~$M$.

 Let $s_1$,~\dots,~$s_m\in R$ be a finite sequence of elements in $R$
generating the ideal~$I$.
 For simplicity of notation, we will denote the sequence
$s_1$,~\dots,~$s_m$ by a single letter~$\s$.
 For any $R$\+module $M$, consider the following \v Cech complex
$C_\s^\bu(M)$
$$\textstyle
 \bigoplus_{j=1}^m M[s_j^{-1}]\lrarrow\bigoplus_{j'<j''}
 M[s_{j'}^{-1},s_{j''}^{-1}]\lrarrow\dotsb\lrarrow
 M[s_1^{-1},\dotsc,s_m^{-1}].
$$
 We place the first term $\bigoplus_j M[s_j^{-1}]$ of the complex
$C_\s^\bu(M)$ in the cohomological degree~$0$, so that there is
a natural morphism of complexes (coaugmentation or unit)
$k\:M\rarrow C_\s^\bu(M)$, and denote by $C_\s^\bu(M)\sptilde$
the cocone of this morphism,
$$\textstyle
 M\lrarrow\bigoplus_{j=1}^m M[s_j^{-1}]\lrarrow\bigoplus_{j'<j''}
 M[s_{j'}^{-1},s_{j''}^{-1}]\lrarrow\dotsb\lrarrow
 M[s_1^{-1},\dotsc,s_m^{-1}].
$$
 So the complex $C_\s^\bu(M)\sptilde$ also has its first term~$M$
at the cohomological degree~$0$.
 Obviously, there are natural isomorphisms of complexes
$C_\s^\bu(M)\simeq C_\s^\bu(R)\ot_R M$ and
$C_\s^\bu(M)\sptilde\simeq C_\s^\bu(R)\sptilde\ot_R M$.
 Furthermore, the complex $C_\s^\bu(R)\sptilde$ is isomorphic to
the tensor product of the similar complexes corresponding to
one-element sequences $\{s_j\}$ over all the elements of
the sequence $s_1$,~\dots,~$s_m$
$$
 C_\s^\bu(R)\sptilde\simeq C_{\{s_1\}}^\bu(R)\sptilde\ot_R\dotsb
 \ot_R C_{\{s_m\}}^\bu(R)\sptilde.
$$

\begin{lem} \label{cech-complex}
\textup{(a)} For any $R$\+module $M$, all the cohomology modules
$H^*C_\s^\bu(M)\sptilde$ of the complex $C_\s^\bu(M)\sptilde$ are
$I$\+torsion $R$\+modules. \par
\textup{(b)} For any $R$\+module $M$, the natural morphism of
complexes $C_\s^\bu(M)\sptilde\rarrow M$ induces an isomorphism
of $R$\+modules
$$
 H^0C_\s^\bu(M)\sptilde\simeq\Gamma_I(M).
$$ \par
\textup{(c)} For any $I$\+torsion $R$\+module $M$, the morphism
of complexes $C_\s^\bu(M)\sptilde \rarrow M$ is an isomorphism.
\end{lem}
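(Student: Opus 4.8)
The plan is to reduce all three statements to a single elementary observation about the ideal $I=(s_1,\dotsc,s_m)$: an $R$\+module $N$ is $I$\+torsion if and only if its localization $N[s_j^{-1}]$ vanishes for every $j=1,\dotsc,m$. The only nonformal direction recovers $I$\+torsion from the vanishing of the separate localizations: if $x\in N$ satisfies $s_j^{n_j}x=0$ for suitable integers $n_j\ge1$, then already $I^{n_1+\dotsb+n_m}x=0$, since every monomial $s_1^{a_1}\dotsm s_m^{a_m}$ of total degree $n_1+\dotsb+n_m$ must have $a_j\ge n_j$ for at least one~$j$ by a pigeonhole count. I will invoke this observation both to compute $\Gamma_I$ and to detect torsion of cohomology.

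Statements~(b) and~(c) I would read off from the explicit shape of $C_\s^\bu(M)\sptilde$. Its degree\+$0$ term is $M$, and its first differential $M\rarrow\bigoplus_j M[s_j^{-1}]$ is, up to sign, the coaugmentation $x\mapsto(x/1)_j$; hence $H^0C_\s^\bu(M)\sptilde=\ker d^0$ consists of those $x\in M$ whose image in each $M[s_j^{-1}]$ vanishes, that is, of those $x$ with $s_j^{n_j}x=0$ for suitable~$n_j$. By the observation above this kernel is exactly $\Gamma_I(M)$, and the morphism $C_\s^\bu(M)\sptilde\rarrow M$ identifies $H^0C_\s^\bu(M)\sptilde$ with the submodule $\Gamma_I(M)\subseteq M$, which is~(b). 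For~(c), if $M$ is $I$\+torsion then $M[s_j^{-1}]=0$ for every~$j$, so every iterated localization $M[s_{j_1}^{-1},\dotsc,s_{j_k}^{-1}]$ occurring in $C_\s^\bu(M)\sptilde$ vanishes; the complex reduces to $M$ placed in degree~$0$ and the projection $C_\s^\bu(M)\sptilde\rarrow M$ is the identity map.

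For~(a) I would pass to localizations. Combining the isomorphisms $C_\s^\bu(M)\sptilde\simeq C_\s^\bu(R)\sptilde\ot_R M$ and $C_\s^\bu(R)\sptilde\simeq C_{\{s_1\}}^\bu(R)\sptilde\ot_R\dotsb\ot_R C_{\{s_m\}}^\bu(R)\sptilde$ recorded before the lemma, the localized complex $C_\s^\bu(M)\sptilde\ot_R R[s_j^{-1}]$ is the tensor product over $R$ of $M$ with the $m$ single\+element complexes, the $j$\+th of which has been tensored with $R[s_j^{-1}]$. That $j$\+th tensorand $C_{\{s_j\}}^\bu(R)\sptilde\ot_R R[s_j^{-1}]=[\,R\rarrow R[s_j^{-1}]\,]\ot_R R[s_j^{-1}]$ becomes the complex $[\,R[s_j^{-1}]\rarrow R[s_j^{-1}]\,]$ with identity differential, by the idempotence $R[s_j^{-1}]\ot_R R[s_j^{-1}]\simeq R[s_j^{-1}]$, and is therefore contractible. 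As a tensor product with a contractible tensorand is contractible, the complex $C_\s^\bu(M)\sptilde\ot_R R[s_j^{-1}]$ is contractible, hence acyclic. Localization being exact, $(H^*C_\s^\bu(M)\sptilde)[s_j^{-1}]=H^*\bigl(C_\s^\bu(M)\sptilde\ot_R R[s_j^{-1}]\bigr)=0$ for every~$j$, and the observation of the first paragraph then shows that every cohomology module $H^*C_\s^\bu(M)\sptilde$ is $I$\+torsion.

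None of the three parts is genuinely difficult. The only point needing care is the pigeonhole bookkeeping of the first paragraph---ensuring that one fixed power $I^{n_1+\dotsb+n_m}$ annihilates a given element as soon as the separate powers $s_j^{n_j}$ do. Everything else is formal, resting only on the exactness of localization, the idempotence of $R[s_j^{-1}]$ under $\ot_R$, and the stability of contractibility under tensor products.
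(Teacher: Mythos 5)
Your proof is correct and follows essentially the same route as the paper's: contractibility of the localized complex $C_\s^\bu(M)\sptilde[s_j^{-1}]$ for~(a), the description of $\ker\bigl(M\to M[s_j^{-1}]\bigr)$ for~(b), and the vanishing of $C_\s^\bu(M)$ for~(c). Your only addition is to spell out, via the pigeonhole count, the fact that vanishing of all localizations $N[s_j^{-1}]$ implies $N$ is $I$\+torsion --- a point the paper's proof leaves implicit.
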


\begin{proof}
 To prove part~(a), notice that the complex
$C_\s^\bu(M)\sptilde[s_j^{-1}]$ is contractible for every~$j$,
because $C_{\{s_j\}}^\bu(R)\sptilde[s_j^{-1}]$ is a contractible
two-term complex $(R[s_j^{-1}]\to R[s_j^{-1}])$.
 To obtain part~(b), recall that for any given element $s\in R$
the kernel of the map $M\rarrow M[s^{-1}]$ consists precisely of
all the elements $x\in M$ for which there exists an integer $n\ge1$
such that $s^nx=0$.
 Part~(c) is obvious, as the complex $C_\s^\bu(M)$ vanishes entirely
for any $I$\+torsion $R$\+module~$M$.
\end{proof}

 Denote by $X$ the affine scheme $\Spec R$, by $Z=\Spec R/I\subset X$
the corresponding closed subscheme in $X$, by $U=X\setminus Z$ its
open complement, and by $U_j$ the principal affine open subschemes
$\Spec R[s_j^{-1}]\subset\Spec R$.
 The open subschemes $U_j\subset X$ form an affine open covering of
the open subscheme $U\subset X$.
 Let $\M$ denote the quasi-coherent sheaf on $X$ corresponding to
the $R$\+module~$M$; then the \v Cech complex $C_\s^\bu(M)$ computes
the sheaf cohomology $H^*(U,\M|_U)$.

 In particular, when $R$ is a Noetherian ring and $J$ is an injective
$R$\+module, the related quasi-coherent sheaf $\J$ on $X$ is flasque
and $\J|_U$ is an injective quasi-coherent sheaf on~$U$
\cite[Lemma~II.7.16 and Theorem~II.7.18]{Har}.
 Hence it follows that the complex $C_\s^\bu(J)$ is quasi-isomorphic
to its zero cohomology module $H^0C_\s^\bu(J)\simeq\J(U)$ and
the complex $C_\s^\bu(J)\sptilde$ is quasi-isomorphic to the two-term
complex $\J(X)\rarrow\J(U)$, which also has its only cohomology group
in degree~$0$.
 We have shown that $H^nC_\s^\bu(J)\sptilde=0$ for $n>0$.
 A direct algebraic proof of this assertion (based on the same
Matlis' classification of injective modules over Noetherian
rings~\cite{Mat0} as Hartshorne's argument in~\cite{Har}) can be
found in~\cite[Theorem~4.34]{PSY}.

 According to one of the equivalent definitions, a finite sequence
of elements~$\s$ in a commutative ring $R$ is said to be
\emph{weakly proregular} if the cohomology groups of the complex
$C_\s^\bu(J)\sptilde$ vanish in the positive cohomological degrees
for all injective $R$\+modules~$J$ (see~\cite[Theorem~3.2(ii)]{Sch}
or~\cite[Theorem~4.24]{PSY}).
 According to~\cite[Corollary~6.2]{PSY}
(see also~\cite[Corollary~3.3]{Sch}), the weak proregularity
property of a finite sequence~$\s$ in a commutative ring $R$ only
depends on the ideal $I$ that this sequence generates, and in fact,
even only on the ideal $\sqrt{I}\subset R$.
 We have seen that any ideal $I$ in a Noetherian ring $R$ is
weakly proregular.
 Another equivalent definition of weak proregularity will appear in
Section~\ref{derived-contramodules}.

\begin{lem}  \label{gamma-finite-homol-dim}
\textup{(a)} For any weakly proregular finite sequence of
elements\/~$\s$ in a commutative ring~$R$, the complex
$C_\s^\bu(M)\sptilde$ assigned to an $R$\+module $M$ computes the right
derived functor\/ $\boR^*\Gamma_I(M)$ of the left exact functor\/
$\Gamma_I$, viewed as taking values in the category $R\modl$. \par
\textup{(b)} For any weakly proregular finitely generated ideal $I$ in
a commutative ring $R$, (the right derived functor\/ $\boR^*\Gamma_I$
of) the left exact functor\/ $\Gamma_I\:R\modl\rarrow R\modl_{I\tors}$
has finite homological dimension not exceeding the minimal number
of generators of the ideal~$I$.
\end{lem}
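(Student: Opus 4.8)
The plan is to deduce part~(a) from a double complex comparison between the \v Cech complex and an injective resolution, and then to read off part~(b) from the length of $C_\s^\bu(M)\sptilde$.

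For part~(a), I would fix an injective resolution $M\rarrow J^\bu$ in $R\modl$, so that $\boR\Gamma_I(M)$ is by definition represented by the complex $\Gamma_I(J^\bu)$. The key observation is that the assignment $N\mapsto C_\s^\bu(N)\sptilde$ is an \emph{exact} functor from $R\modl$ to the category of complexes of $R$\+modules, since each of its terms is a localization $N[s_{j_1}^{-1},\dotsc,s_{j_k}^{-1}]$ and localization is exact. Applying this functor termwise to the resolution produces a bicomplex, concentrated in the horizontal (\v Cech--cocone) degrees $0,\dotsc,m$ and in the vertical (resolution) degrees $\ge0$; let $\operatorname{Tot}(C_\s^\bu(J^\bu)\sptilde)$ denote its total complex. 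Because an exact functor preserves quasi-isomorphisms, the quasi-isomorphism $M\rarrow J^\bu$ is carried to a quasi-isomorphism
$$
 C_\s^\bu(M)\sptilde\rarrow\operatorname{Tot}(C_\s^\bu(J^\bu)\sptilde).
$$

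It then remains to evaluate the total complex from the other side. Taking the horizontal (\v Cech) cohomology first, I would invoke weak proregularity together with Lemma~\ref{cech-complex}(b): for every injective module $J^q$ one has $H^pC_\s^\bu(J^q)\sptilde=0$ for $p>0$ and $H^0C_\s^\bu(J^q)\sptilde\simeq\Gamma_I(J^q)$. Since the bicomplex is bounded below, the corresponding spectral sequence converges, and it collapses onto the single column $\Gamma_I(J^\bu)$; hence $\operatorname{Tot}(C_\s^\bu(J^\bu)\sptilde)\simeq\Gamma_I(J^\bu)=\boR\Gamma_I(M)$. Combining the two quasi-isomorphisms yields $C_\s^\bu(M)\sptilde\simeq\boR\Gamma_I(M)$, which is part~(a).

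For part~(b), part~(a) exhibits $\boR\Gamma_I(M)$ as computed by the complex $C_\s^\bu(M)\sptilde$, which lives in cohomological degrees $0,\dotsc,m$; therefore $\boR^i\Gamma_I(M)=0$ for $i>m$. By Lemma~\ref{cech-complex}(a) these cohomology modules are $I$\+torsion, so they are also the values of the derived functor of $\Gamma_I$ regarded with target $R\modl_{I\tors}$, and the homological dimension bound is unaffected by the change of target. Since weak proregularity depends only on the ideal $I$, I may take $\s$ to be a generating set of minimal cardinality, so that $m$ equals the minimal number of generators of $I$, giving the claimed bound. The one point requiring care is the collapse argument in part~(a): one must ensure that weak proregularity is applied uniformly to every injective term $J^q$ and that no convergence issue arises --- both guaranteed here because the horizontal direction has fixed finite length~$m$ and the vertical direction is bounded below.
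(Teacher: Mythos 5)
Your proposal is correct and takes essentially the same route as the paper's proof: both compare the total complex of the bicomplex $C_\s^\bu(J^\bu)\sptilde$ on one side with $\Gamma_I(J^\bu)$, using weak proregularity and Lemma~\ref{cech-complex}(b) termwise on the injectives $J^q$, and on the other side with $C_\s^\bu(M)\sptilde$, using exactness of the functor $N\longmapsto C_\s^\bu(N)\sptilde$; the paper merely states these two quasi-isomorphisms directly where you organize them as a collapsing spectral sequence (legitimate here, as you note, since the \v Cech direction is finite and the resolution direction bounded below). Your part~(b) likewise matches the paper's, which deduces the bound from the length of $C_\s^\bu(M)\sptilde$ together with the fact that the exact embedding $R\modl_{I\tors}\rarrow R\modl$ takes nonzero objects to nonzero objects.
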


\begin{proof}
 This is~\cite[Theorem~3.2(iii)]{Sch} or~\cite[Corollaries~4.26
and~4.28]{PSY}.
 Part~(a): given an $R$\+module $M$, consider its right injective
resolution~$J^\bu$.
 Then the total complex of the bicomplex $C_\s^\bu(J^\bu)\sptilde$
is quasi-isomorphic both to the complex $\Gamma_I(J^\bu)$,
since the morphism $\Gamma_I(J^n)\rarrow C_\s^\bu(J^n)\sptilde$
is a quasi-isomorphism for every~$n$, and to the complex
$C_\s^\bu(M)\sptilde$, because the functor $N\longmapsto
C_\s^\bu(N)\sptilde$ takes exact sequences of $R$\+modules $N$
to exact sequences of complexes $C_\s^\bu({-})\sptilde$.
 As the embedding functor $R\modl_{I\tors}\rarrow R\modl$ is exact
and takes nonzero objects to nonzero objects, part~(b)
immediately follows.
\end{proof}

 For the most part of
Sections~\ref{derived-torsion-modules}\+-\ref{derived-contramodules},
we will consider simultaneously and almost on equal footing a number
of conventional and exotic derived category constructions introduced
in~\cite{Psemi,Pkoszul}, and~\cite[Appendix~A]{Pcosh}.
 The definitions of these derived categories are briefly recalled
in Appendix~\ref{exotic-derived} to this paper.
 A reader unfamiliar with derived categories of the second kind will
not loose much restricting his attention to the conventional
derived categories $\sD^\b$, $\sD^+$, $\sD^-$, and $\sD$, and
forgetting about the rest.
 In Section~\ref{duality-theorem-secn}, we will only work with
the four conventional derived categories; and then return to a list
of symbols including both the conventional and the absolute derived
categories in the final Sections~\ref{dedualizing-artinian-quotient}\+-%
\ref{dedualizing-weakly-proregular} and
Appendix~\ref{derived-finite-homol-dim}.

\begin{thm}  \label{torsion-fully-faithful}
 Let $R$ be a commutative ring and $I\subset R$ be a weakly proregular
finitely generated ideal.
 Then for any symbol\/ $\st=\b$, $+$, $-$, $\varnothing$, $\abs+$,
$\abs-$, $\co$, or\/~$\abs$, the triangulated functor\/
$\sD^\st(R\modl_{I\tors})\rarrow\sD^\st(R\modl)$ induced by the embedding
of abelian categories $R\modl_{I\tors}\rarrow R\modl$ is fully faithful.
\end{thm}

\begin{proof}
 In the case of the derived categories $\sD^\b$ or $\sD^+$ over
a Noetherian ring $R$, it suffices to notice that, according to
the Artin--Rees lemma, the functor $R\modl_{I\tors}\rarrow R\modl$
preserves injectivity of objects (cf.~\cite[Proposition~2.2.2(a)]{Prev}).
 For the other derived categories in our list,
Lemma~\ref{gamma-finite-homol-dim}(b) essentially says that
the question ``has finite homological dimension'' and therefore
``reduces to finite complexes''.
 A formal proof in the general case proceeds as follows.

 Denote by $R\modl_{I\tors\adj}$ the full subcategory of objects adjusted
to $\Gamma_I$ in $R\modl$; in other words, the subcategory
$R\modl_{I\tors\adj}\subset R\modl$ consists of all the $R$\+modules $M$
for which $\boR^n\Gamma_I(M)=0$ for $n>0$, or equivalently,
$H^nC_\s^\bu(M)\sptilde=0$ for $n>0$.
 The full subcategory $R\modl_{I\tors\adj}$ is closed under extensions,
cokernels of injective morphisms, and infinite direct sums
in $R\modl$.
 Hence the subcategory $R\modl_{I\tors\adj}$ inherits an exact category
structure of the abelian category $R\modl$.

 Moreover, by Lemma~\ref{gamma-finite-homol-dim}(b) any $R$\+module has
a finite right resolution of uniformly bounded length by objects of
the subcategory $R\modl_{I\tors\adj}$.
  It follows that for any symbol~$\st$ on our list the triangulated
functor $\sD^\st(R\modl_{I\tors\adj}) \rarrow\sD^\st(R\modl)$ induced by
the exact embedding $R\modl_{I\tors\adj}\rarrow R\modl$ is
an equivalence of triangulated
categories~\cite[Proposition~A.5.6]{Pcosh},
$$
 \sD^\st(R\modl_{I\tors\adj})\simeq\sD^\st(R\modl).
$$

 Obviously, the restriction of functor $\Gamma_I$ to the full exact
subcategory $R\modl_{I\tors\adj}\allowbreak\subset R\modl$ is an exact
functor $\Gamma_I\:R\modl_{I\tors\adj}\rarrow R\modl_{I\tors}$.
 Applying the functor $\Gamma_I$ to complexes of modules from
the category $R\modl_{I\tors\adj}$ termwise, we obtain the right
derived functor
$$
 \boR\Gamma_I\:\sD^\st(R\modl)\rarrow\sD^\st(R\modl_{I\tors}).
$$
 As the passage to derived functors in the sense of
Deligne~\cite[n$^{\mathrm{os}}$\,1.2.1\+-2]{Del} generally preserves
adjoint pairs of functors, the derived functor $\boR\Gamma_I$ is
right adjoint to the triangulated functor $\sD^\st(R\modl_{I\tors})
\rarrow\sD^\st(R\modl)$ induced by the embedding of abelian
categories $R\modl_{I\tors}\rarrow R\modl$
(see also~\cite[Lemma~8.3]{Psemi}).

 Furthermore, by Lemma~\ref{cech-complex}(c) the abelian subcategory
$R\modl_{I\tors}\subset R\modl$ is contained in the exact subcategory
$R\modl_{I\tors\adj}$,
$$
 R\modl_{I\tors}\subset R\modl_{I\tors\adj}.
$$
 Therefore, the composition of adjoint functors
$$
 \sD^\st(R\modl_{I\tors})\rarrow\sD^\st(R\modl)\rarrow
 \sD^\st(R\modl_{I\tors})
$$
is isomorphic to the identity functor on $\sD^\st(R\modl_{I\tors})$.
 It follows immediately that the functor $\sD^\st(R\modl_{I\tors})
\rarrow\sD^\st(R\modl)$ is fully faithful, while the functor
$\boR\Gamma_I\:\sD^\st(R\modl)\rarrow\sD^\st(R\modl_{I\tors})$ is
a Verdier quotient functor.
\end{proof}

\begin{cor} \label{complexes-with-torsion-cohomology}
 Let $R$ be a commutative ring and $I\subset R$ be a weakly
proregular finitely generated ideal.
 Then for any symbol\/ $\st=\b$, $+$, $-$, or\/~$\varnothing$,
the triangulated functor\/ $\sD^\st(R\modl_{I\tors})\rarrow
\sD^*(R\modl)$ identifies the derived category\/
$\sD^\st(R\modl_{I\tors})$ with the full subcategory\/
$\sD^\st_{I\tors}(R\modl)\subset\sD^\st(R\modl)$ consisting of
all the complexes with the cohomology modules belonging
to $R\modl_{I\tors}$.
\end{cor}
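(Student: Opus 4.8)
The plan is to combine Theorem~\ref{torsion-fully-faithful} with an explicit computation of the cone of the counit of the adjunction it produces. By Theorem~\ref{torsion-fully-faithful}, for each symbol $\st=\b$, $+$, $-$, $\varnothing$ the embedding-induced functor $F\:\sD^\st(R\modl_{I\tors})\rarrow\sD^\st(R\modl)$ is fully faithful, its right adjoint is the derived functor $\boR\Gamma_I$, and the unit $\Id\rarrow\boR\Gamma_I\,F$ is an isomorphism. Since $F$ is fully faithful, the content of the Corollary is exactly that the essential image of $F$ coincides with the full subcategory $\sD^\st_{I\tors}(R\modl)$ of complexes with $I$\+torsion cohomology. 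That this essential image is contained in $\sD^\st_{I\tors}(R\modl)$ is immediate: the subcategory $R\modl_{I\tors}$ is closed under sub- and quotient objects in $R\modl$, so the cohomology modules of a complex of $I$\+torsion modules are again $I$\+torsion, and cohomology is invariant under quasi-isomorphism.

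For the reverse inclusion, I would take an arbitrary $M^\bu\in\sD^\st_{I\tors}(R\modl)$ and analyze the augmentation triangle. By Lemma~\ref{gamma-finite-homol-dim}(a), the object $F\,\boR\Gamma_I(M^\bu)$ is represented in $\sD^\st(R\modl)$ by the total complex of $C_\s^\bu(M^\bu)\sptilde$, with the natural augmentation $C_\s^\bu(M^\bu)\sptilde\rarrow M^\bu$ representing the counit $F\,\boR\Gamma_I(M^\bu)\rarrow M^\bu$. As $C_\s^\bu(M^\bu)\sptilde$ is by construction the cocone of the coaugmentation $M^\bu\rarrow C_\s^\bu(M^\bu)$, the cone of this map is the \v Cech complex $C_\s^\bu(M^\bu)\simeq C_\s^\bu(R)\ot_R M^\bu$. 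Hence $M^\bu$ will lie in the essential image of $F$ as soon as $C_\s^\bu(R)\ot_R M^\bu$ is acyclic: the augmentation is then an isomorphism in $\sD^\st(R\modl)$, identifying $M^\bu$ with $F\,\boR\Gamma_I(M^\bu)$.

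To establish this acyclicity I would exploit that $C_\s^\bu(R)$ is a \emph{bounded} complex of \emph{flat} $R$\+modules, each term being a finite direct sum of localizations of~$R$. The bicomplex $C_\s^\bu(R)\ot_R M^\bu$ then carries a spectral sequence with second page $E_2^{p,q}=H^p(C_\s^\bu(H^q(M^\bu)))$ abutting to $H^{p+q}(C_\s^\bu(R)\ot_R M^\bu)$; boundedness of the \v Cech direction guarantees that only finitely many columns are nonzero in any fixed total degree, so the spectral sequence converges even in the unbounded case $\st=\varnothing$. Now for any $I$\+torsion module $N$ one has $N[s_j^{-1}]=0$ for every generator $s_j$, so the entire \v Cech complex $C_\s^\bu(N)$ vanishes; taking $N=H^q(M^\bu)$ kills the whole $E_2$ page, and therefore $C_\s^\bu(R)\ot_R M^\bu$ is acyclic.

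The main point requiring genuine care is the convergence of this spectral sequence in the unbounded case, where a naive d\'evissage over the (possibly infinitely many) cohomology modules of $M^\bu$ would not obviously be legitimate. This is precisely where the finiteness of the generating set of $I$ enters, through the boundedness of $C_\s^\bu(R)$; equivalently, it reflects the finite homological dimension of $\boR\Gamma_I$ recorded in Lemma~\ref{gamma-finite-homol-dim}(b). Everything else reduces to formal properties of the adjunction together with the vanishing $C_\s^\bu(N)=0$ for $I$\+torsion $N$ already observed in the proof of Lemma~\ref{cech-complex}(c).
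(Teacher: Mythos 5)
Your argument is correct and follows essentially the paper's own route: you use the same adjunction from Theorem~\ref{torsion-fully-faithful}, characterize the essential image by the counit $F\.\boR\Gamma_I(M^\bu)\rarrow M^\bu$ being an isomorphism, and reduce everything to the vanishing $C_\s^\bu(N)=0$ for $I$\+torsion modules~$N$. Your convergent spectral sequence on the cone $C_\s^\bu(R)\ot_R M^\bu$ (convergent thanks to the finite \v Cech filtration) is simply an explicit rendering of the paper's appeal to the finite homological dimension of $\boR\Gamma_I$ and the way-out functor argument, so the two proofs coincide in substance.
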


\begin{proof}
 This is~\cite[Corollary~4.32]{PSY}; see
also~\cite[Proposition~6.12]{DG}.
 It is obvious that the image of the functor $\sD^\st(R\modl_{I\tors})
\rarrow\sD^\st(R\modl)$ is contained in $\sD^\st_{I\tors}(R\modl)$.
 To prove the converse inclusion, one notices that, according to
the proof of Theorem~\ref{torsion-fully-faithful}, the image of
the fully faithful functor $\sD^\st(R\modl_{I\tors})\rarrow
\sD^\st(R\modl)$ consists precisely of all the complexes fixed
by the composition of adjoint functors $\sD^\st(R\modl)\rarrow
\sD^\st(R\modl_{I\tors})\rarrow\sD^\st(R\modl)$.
 Since the functor $\boR\Gamma_I$ has finite homological dimension,
a complex viewed as an object of the conventional derived category is
fixed by this composition whenever its cohomology modules are.
\end{proof}

\Section{Derived Category of Contramodules}
\label{derived-contramodules}

 An abelian group $P$ with an additive operator $s\:P\rarrow P$ is
said to be an \emph{$s$\+contramodule} if for every sequence of
elements $p_0$, $p_1$, $p_2$,~\dots~$\in P$ there is a unique
sequence of elements $q_0$, $q_1$, $q_2$,~\dots~$\in P$ satisfying
the infinite system of nonhomogeneous linear equations
\begin{equation} \label{p-q-s-linear-system}
 q_n=sq_{n+1}+p_n \qquad \text{for all $n\ge0$.}
\end{equation}
 The \emph{infinite summation operation with $s$\+power coefficients}
in an $s$\+contramodule $P$ is defined by the rule
$$
 \sum_{n=0}^\infty s^n p_n=q_0\in P.
$$
 Conversely, given an additive, associative, and unital infinite
summation operation
$$
 (p_n)_{n=0}^\infty\longmapsto\sum_{n=0}^\infty s^np_n
$$
in an abelian group $P$ one can uniquely solve the system of
equations~\eqref{p-q-s-linear-system} by setting
$$
 q_n=\sum_{i=0}^\infty s^ip_{n+i}.
$$
(see~\cite[Section~1.6]{Prev} and~\cite[Section~B.5]{Pweak};
cf.~\cite[Section~C.2]{Pcosh}).

 A module $P$ over a ring $R$ with a chosen element $s\in R$ is
said to be an \emph{$s$\+contra\-module} if it is a contramodule with
respect to the operator of multiplication with~$s$.
 When $s$~is a central element in $R$, this condition is equivalent
to the Ext group vanishing $\Ext_R^i(R[s^{-1}],P)=0$ for $i=0$
and~$1$ (notice that the $R$\+module $R[s^{-1}]$ has projective
dimension at most~$1$, so such Ext groups with $i\ge2$ always vanish)
\cite[Lemma~B.7.1]{Pweak}.

 Let $R$ be a commutative ring, $I$ be an ideal in $R$, and $s_j\in R$
be a set of generators of the ideal~$I$.
 Then the property of an $R$\+module $P$ to be a contramodule with
respect to all the elements~$s_j$ depends only on the ideal $I\subset R$
and not on the choice of a particular set of its generators.
 Indeed, it suffices to check this assertion for rings finitely
generated over the ring of integers~$\Z$, and for Noetherian rings
it follows from~\cite[Theorem~B.1.1]{Pweak}
(see also~\cite[Section~2.2]{Prev}).
 Another proof of this assertion will be obtained as a byproduct of
the arguments below in this section.

 So an $R$\+module $P$ is called an \emph{$I$\+contramodule}, or
an \emph{$I$\+contramodule $R$\+module}, if it is a contramodule
with respect to some set of generators of the ideal~$I$.
 Making use of the definition of contramodules in terms of Ext
vanishing, one easily deduces the assertion that the property of
an $R$\+module to be an $I$\+contramodule only depends on
the radical $\sqrt{I}\subset R$ of the ideal~$I$.

 The full subcategory of $I$\+contramodule $R$\+modules
$R\modl_{I\ctra}\subset R\modl$ is closed under the kernels and
cokernels of morphisms, extensions, and infinite products in $R\modl$.
 Hence $R\modl_{I\ctra}$ is an abelian category with exact functors
of infinite product and its embedding $R\modl_{I\ctra}\rarrow R\modl$
into the category of $R$\+modules is an exact functor preserving
the infinite products.

 Let $M$ and $P$ be two $R$\+modules.
 Then whenever \emph{either} $M$ is $I$\+torsion, \emph{or} $P$ is
an $I$\+contramodule, the $R$\+module $\Hom_R(M,P)$ is
an $I$\+contramodule.
 Indeed, for any element $s\in I$ the infinite summation operations
with $s$\+power coefficients in $\Hom_R(M,P)$ are provided by
the formula
$$
 \Big(\sum_{n=0}^\infty s^nf_n\Big)(x)=\sum_{n=0}^\infty f_n(s^nx),
$$
where the sum in the right-hand side only has a finite number of
nonvanishing summands, in the former case, and
$$
 \Big(\sum_{n=0}^\infty s^nf_n\Big)(x)=\sum_{n=0}^\infty s^nf_n(x),
$$
where the sum in the right-hand side is the $s$\+power infinite
summation operation in the $s$\+contramodule $P$, in the latter one.
 Here $f_n$~are arbitrary elements of the $R$\+module $\Hom_R(M,P)$
and $x$ is an arbitrary element of~$M$.

 Assume that the ideal $I\subset R$ is finitely generated.
 In any $R$\+module that is separated and complete in the $I$\+adic
topology, i.~e., an $R$\+module $P$ for which the natural map
$P\rarrow\varprojlim_n P/I^nP$ is an isomorphism, the infinite
summation operation with $s$\+power coefficients can be defined
for any $s\in I$ as the $I$\+adic limit of finite partial sums.
 So all the $I$\+adically separated and complete $R$\+modules are
$I$\+contramodules.
 Conversely, all the $I$\+contramodule $R$\+modules are $I$\+adically
complete, but they do not have to be $I$\+adically separated
(see~\cite[Example~4.33]{PSY} or~\cite[Section~1.5]{Prev}
and the references therein).
 Still, one has $P/IP\ne0$ for any
$I$\+contramodule $R$\+module $P\ne0$
(see~\cite[Corollary~2.5]{PSY2}, \cite[Lemma~1.3.1]{Pweak},
or~\cite[Lemma~2.1(b)]{Prev}).

 The theory of ``cohomologically $I$\+adically complete complexes
of $R$\+modules'' is developed in~\cite{PSY} using the $I$\+adic
completion functor $\Lambda_I\:M\longmapsto \varprojlim_n M/I^nM$,
which is neither left, nor right exact (and generally seems to be
only well-behaved for flat $R$\+modules~$M$).
 We prefer to construct and use the functor left adjoint to
the embedding functor of the full subcategory of $I$\+contramodule
$R$\+modules.

\begin{prop}  \label{functor-delta}
 Assume that the ideal $I\subset R$ is finitely generated.
 Then the embedding functor $R\modl_{I\ctra}\rarrow R\modl$ has
a left adjoint functor
$$
 \Delta_I\:R\modl\rarrow R\modl_{I\ctra}.
$$
\end{prop}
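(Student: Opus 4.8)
The plan is to construct $\Delta_I$ by prescribing its values on free $R$\+modules and then extending by right exactness, exploiting the fact (established just above) that $R\modl_{I\ctra}$ is an abelian category closed under cokernels and infinite products inside $R\modl$. Since a left adjoint necessarily preserves cokernels, and every $R$\+module $M$ admits a free presentation $R^{(Y)}\rarrow R^{(X)}\rarrow M\rarrow0$, it suffices to define the reflection on the free modules $R^{(X)}$, to verify its universal property, and then to set $\Delta_I(M)$ equal to the cokernel of the induced map of reflections. In this way the whole construction reduces to producing the \emph{free $I$\+contramodule} on a set~$X$.

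For the free case the natural candidate is the $I$\+adic completion $\Lambda_I(R^{(X)})=\varprojlim_n R^{(X)}/I^nR^{(X)}$, a functor manifestly depending only on the ideal $I$ and not on a choice of its generators. First I would check that $\Lambda_I(R^{(X)})$ is an $I$\+contramodule; since it is $I$\+adically complete, this follows from the remark recalled above once separatedness is secured. The essential point is then the universal property: for every $I$\+contramodule $P$ there should be a natural isomorphism $\Hom_R(\Lambda_I(R^{(X)}),P)\cong P^X=\Hom_R(R^{(X)},P)$. A family $(p_x)_{x\in X}$ is extended to a homomorphism out of $\Lambda_I(R^{(X)})$ by sending an $I$\+adically convergent sum $\sum_x r_xx$ to $\sum_x r_xp_x$, the latter interpreted through the infinite summation operations with $s_j$\+power coefficients available in the $I$\+contramodule~$P$; conversely, a homomorphism is determined by its values on the generators $x\in X$ by the same convergence argument, which yields both surjectivity and injectivity of the comparison map.

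With the free case in hand, for an arbitrary $R$\+module $M$ I would fix a free presentation $R^{(Y)}\overset{\phi}\rarrow R^{(X)}\rarrow M\rarrow0$ and set $\Delta_I(M)=\operatorname{coker}\bigl(\Lambda_I(\phi)\colon\Lambda_I(R^{(Y)})\rarrow\Lambda_I(R^{(X)})\bigr)$, which is a cokernel of $I$\+contramodules and hence itself an $I$\+contramodule by the closure of $R\modl_{I\ctra}$ under cokernels. Applying $\Hom_R({-},P)$ for an $I$\+contramodule $P$ and using left exactness of $\Hom_R({-},P)$ on the right exact presentation, I obtain $\Hom_R(\Delta_I(M),P)\cong\ker(P^X\rarrow P^Y)\cong\Hom_R(M,P)$, which is precisely the required adjunction. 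Independence of the presentation, functoriality in $M$, and naturality of the adjunction isomorphism then follow formally from this universal property.

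The main obstacle is the universal property of the free $I$\+contramodule, i.e.\ the identification of $\Lambda_I(R^{(X)})$ as the reflection of $R^{(X)}$, and it has two delicate components. First, one must verify that $\Lambda_I(R^{(X)})$ really is an $I$\+contramodule; the most robust route is to check the defining unique solvability of the systems~\eqref{p-q-s-linear-system} for each generator~$s_j$, equivalently the vanishing $\Ext_R^i(R[s_j^{-1}],\Lambda_I(R^{(X)}))=0$ for $i=0,1$. For infinitely generated free modules over a non\+Noetherian ring this genuinely uses the finite generation of $I$ and is conveniently reduced to the universal case $R=\Z[s_1,\dots,s_m]$, exactly as in the argument for generator\+independence above. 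Second, the prescription $\sum_x r_xx\mapsto\sum_x r_xp_x$ must be shown to be well\+defined and $R$\+linear, which amounts to verifying the convergence and mutual compatibility of the iterated $s_j$\+power summations in $P$ over a possibly infinite index set~$X$; this bookkeeping is the technical heart of the proof. As a bonus, because $\Lambda_I$ and the resulting functor $\Delta_I$ depend only on the ideal $I$, this construction simultaneously delivers the generator\+independence of the notion of an $I$\+contramodule promised earlier in the section.
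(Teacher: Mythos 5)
Your overall architecture -- define the reflection on free modules and extend by right exactness along free presentations -- is sound, and the paper's own proof uses the same principle in its reduction to $\Z[s]$\+modules of the form $A[s]$. The fatal gap is your identification of the reflection of a free module with its $I$\+adic completion. The proposition assumes only that $I$ is finitely generated, whereas the isomorphism $\Delta_I(R[X])\simeq\Lambda_I(R[X])=\fR[[X]]$ is precisely Lemma~\ref{agree-on-flats} (and Lemma~\ref{free-contramodules}(a)) of the paper, proved under the \emph{weak proregularity} hypothesis. In general there is only a natural surjection $\Delta_I(M)\rarrow\Lambda_I(M)$ with kernel $\varprojlim^1_n H_1\Hom_R(\Tel_n^\bu(R,\s),M)$, and the Example following Lemma~\ref{agree-on-flats} exhibits a ring $R=k[x_1,x_2,\dotsc,s]$ with relations $x_ix_j=0=s^ix_i$ for which this kernel is nonzero already for $M=R$ and $I=(s)$: it is $W[[s]]$ with $W=\prod_i kx_i\big/\bigoplus_i kx_i$. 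Consequently $\Lambda_I(R)$ cannot satisfy your universal property there: take $P=\Delta_I(R)$ (an $I$\+contramodule) and $p$~the image of~$1$; the unique extension of $1\mapsto p$ to the true reflection is the identity of $\Delta_I(R)$, and if $\Hom_R(\Lambda_I(R),P)\cong P$ held, this identity would factor through $\Lambda_I(R)$, splitting a surjection with nonzero kernel --- a contradiction. Concretely, your prescription $\sum_x r_xx\mapsto\sum_x r_xp_x$ is not well defined when $P$ is complete but not $I$\+adically separated (which $I$\+contramodules need not be, as the paper stresses): the value depends on the chosen expansions of the coefficients $r_x$ as iterated $s_j$\+power series, and the discrepancies are measured exactly by the $\varprojlim^1$ term above. (Your verification that $\Lambda_I(R^{(X)})$ \emph{is} an $I$\+contramodule is fine for finitely generated~$I$; it is the reflection property that fails.)

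The repair is the paper's construction: instead of $\Lambda_s$, define $\Delta_s(M)$ as the cokernel of the explicit map $\phi_M^s\:\prod_{n\ge1}M\rarrow\prod_{n\ge0}M$ sending $(r_1,r_2,\dotsc)$ to $(-sr_1,\.r_1-sr_2,\.r_2-sr_3,\dotsc)$ --- equivalently $H_0\Hom_R(T^\bu(R,s),M)$, by Lemma~\ref{telescope-complex}(b) --- prove the $s$\+contramodule property by reduction to $\Z[s]$\+modules $A[s]$, where $\Delta_s(A[s])=A[[s]]$, verify the universal property directly from the unique solvability of the system $q_n=sq_{n+1}+p_n$, and then compose, $\Delta_I=\Delta_{s_1}\dotsm\Delta_{s_m}$, using that each $\Delta_s$ preserves the subcategory of $t$\+contramodules (being a cokernel of a map of infinite products). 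This quotient of a product of copies of $M$ surjects onto $\Lambda_I(M)$ but is strictly larger in general, and it, not the completion, is the reflection; the coincidence of the two on free (indeed flat) modules is exactly what weak proregularity buys later in the paper.
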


\begin{proof}
 We will present an explicit construction of the functor~$\Delta_I$.
 Let us first consider the case of an ideal $I$ generated by
a single element $s\in R$.
 To any $R$\+module $M$ we assign the $R$\+module morphism
$$\textstyle
 \phi_M^s\:\prod_{n=1}^\infty M\lrarrow\prod_{n=0}^\infty M
$$
taking every sequence of elements $r_1$, $r_2$,~\dots~$\in M$
to the sequence of elements
$$
 m_0=-sr_1, \ \ m_1=r_1-sr_2, \ \ m_2=r_2-sr_3, \ \dotsc 
$$
 Denote by $\Delta_s(M)$ the cokernel of the morphism~$\phi_M^s$.
 
 Let us show, first of all, that the $R$\+module $\Delta_s(M)$ is
an $s$\+contramodule.
 The assertion only depends on the $\Z[s]$\+module structure
on $M$, so one can assume that $R=\Z[s]$.
 For any abelian group $A$, consider the $\Z[s]$\+module $A[s]$ of
polynomials in~$s$ with the coefficients in~$A$.
 It is straightforward to compute that $\Delta_s(A[s])=A[[s]]$ is
the $\Z[s]$\+module of formal Taylor power series in~$s$ with
the coefficients in~$A$; the isomorphism is provided by the map
$\prod_{n=0}^\infty A[s]\rarrow A[[s]]$ taking any sequence of
polynomials $m_0$, $m_1$, $m_2$,~\dots\ to the power series
$\sum_{n=0}^\infty s^nm_n(s)$.
 Clearly, any $\Z[s]$\+module is the cokernel of a morphism of
modules of the form $A[s]$, the functor $\Delta_s$ preserves
cokernels, and the $\Z[s]$\+modules $A[[s]]$ are $s$\+contramodules.
 Since the class of $s$\+contramodules is closed under cokernels,
the assertion is proven.

 Now let us show that the group of $R$\+module morphisms
$\Hom_R(M,P)$ is naturally isomorphic to $\Hom_R(\Delta_s(M),P)$
whenever an $R$\+module $P$ is an $s$\+contramodule.
 The embedding $M\rarrow\prod_{n=0}^\infty M$ taking every element
$m\in M$ to the sequence $m_0=m$, $m_1=m_2=\dotsb=0$, induces
a natural morphism $M\rarrow\Delta_s(M)$ for any $R$\+module~$M$.
 For any $s$\+contramodule $P$, any $R$\+linear map $f\:M\rarrow P$
is extended to an $R$\+linear map $g\:\Delta_s(M)\rarrow P$ by the rule
$$
 g(m_0,m_1,m_2,\dotsc) = \sum_{n=0}^\infty s^n f(m_n),
$$
where the summation sign in the right-hand side stands for
the infinite summation operation in an $s$\+contramodule~$P$.

 It remains to show that there exists no other $R$\+linear map
$\Delta_s(M)\rarrow P$ whose composition with the natural map
$M\rarrow\Delta_s(M)$ is equal to the given $R$\+linear map
$f\:M\rarrow P$.
 Indeed, let $h\:\Delta_s(M)\rarrow P$ be such a map; we will denote
its composition with the natural surjection $\prod_{n=0}^\infty M
\rarrow\Delta_s(M)$ also by~$h$.  Set
$$
 q_n=h(m_n,m_{n+1},m_{n+2},\dotsc)\in P\qquad \text{for every $n\ge0$.}
$$
 The map~$\phi_M^s$ takes the sequence of elements $(r_1,r_2,\dotsc)
=(m_{n+1},m_{n+2},\dotsc)$ to the sequence
$$
 \phi(m_{n+1},m_{n+2},\dotsc)=
 (0,m_{n+1},m_{n+2},\dotsc)-s(m_{n+1},m_{n+2},\dotsc),
$$
so the elements $q_n$ and $f(m_n)\in P$ satisfy the system of
linear equations
$$
 q_n-f(m_n)=sq_{n+1} \qquad\text{for all $n\ge0$.}
$$
 By the definition of an $s$\+contramodule, the sequence of
elements~$q_n$ is uniquely determined by the sequence~$f(m_n)$.

 We have constructed the functor $\Delta_I=\Delta_s$ for an ideal
$I=(s)$ generated by a single element $s\in R$.
 To produce the functor $\Delta_I$ for an ideal $I=(s_j)_{j=1}^m$
generated by a finite sequence of elements $s_j\in R$, notice that for
any two elements $s$ and $t\in R$ the functor $\Delta_s\:R\modl
\rarrow R\modl_{(s)\ctra}\subset R\modl$ takes the full subcategory
$R\modl_{(t)\ctra}\subset R\modl$ into itself.
 Indeed, the full subcategory $R\modl_{(t)\ctra}$ is closed under
the infinite products and cokernels in $R\modl$.

 Hence it is clear that the restriction of the functor $\Delta_s$
to the full subcategory $R\modl_{(t)\ctra}\subset R\modl$ provides
a functor $R\modl_{(t)\ctra}\rarrow R\modl_{(s,t)\ctra}$ left adjoint
to the embedding of full subcategory $R\modl_{(s,t)\ctra}\rarrow
R\modl_{(t)\ctra}$.
 Composing the functors $\Delta_{s_j}$ over all the elements in
a chosen finite set of generators of the ideal $I\subset R$, we
obtain the desired functor $\Delta_I=\Delta_{s_1}\dotsm\Delta_{s_m}$.
\end{proof}

 In the paper~\cite[Definition~5.1]{PSY}, the notation
$\Tel^\bu(R,s)$ is used for the following two-term complex of
infinitely generated free $R$\+modules sitting in the cohomological
degrees~$0$ and~$1$
$$\textstyle
 \bigoplus_{n=0}^\infty R\delta_n\lrarrow
 \bigoplus_{n=0}^\infty R\delta_n
$$
with the differential $d(\delta_0)=\delta_0$, \,$d(\delta_n)=
\delta_{n-1}-s\delta_n$ for $n\ge 1$.
 Essentially the same complex can be found in~\cite[equation~(6.7)]{DG}.
 Passing to the quotient modules of both the terms of this complex
by the submodules $R\delta_0$ and redenoting $\epsilon_n=\delta_{n+1}$
in the left-hand side, we obtain a homotopy equivalent complex
$T^\bu(R,s)$ of the form
$$\textstyle
 \bigoplus_{n=0}^\infty R\epsilon_n\lrarrow
 \bigoplus_{n=1}^\infty R\delta_n
$$
with the differential $d(\epsilon_0)=-s\delta_1$, \,$d(\epsilon_n)=
\delta_n-s\delta_{n+1}$ for $n\ge 1$.

 The telescope complex $\Tel^\bu(R,s)$ is interesting because
it is quasi-isomorphic to the two-term \v Cech complex
$C^\bu_{\{s\}}(R)\sptilde=(R\to R[s^{-1}])$ from
Section~\ref{derived-torsion-modules}; the quasi-isomorphism
$\Tel^\bu(R,s)\rarrow C^\bu_{\{s\}}(R)\sptilde$ is given by
the rules $\delta_0\longmapsto 1$, \,$\delta_n\longmapsto0$
in degree~$0$ and $\delta_n\longmapsto s^{-n}$ in degree~$1$.
 Setting $\Tel^\bu(R,\s)=\Tel^\bu(R,s_1)\ot_R\dotsb\ot_R\Tel^\bu(R,s_m)$
and $T^\bu(R,\s)=T^\bu(R,s_1)\ot_R\dotsb\ot_R T^\bu(R,s_m)$ for a finite
sequence $s_1$,~\dots, $s_m\in R$, we have a quasi-isomorphism
of finite complexes of flat $R$\+modules
$$
 \Tel^\bu(R,\s)\lrarrow C^\bu_\s(R)\sptilde
$$
and a homotopy equivalence of finite complexes of free $R$\+modules
$$
 \Tel^\bu(R,\s)\lrarrow T^\bu(R,\s).
$$

\begin{lem} \label{telescope-complex}
\textup{(a)} For any $R$\+module $M$, all the homology modules
of the complex\/ $\Hom_R(\Tel^\bu(R,\s),M)$ are $I$\+contramodule
$R$\+modules. \par
\textup{(b)} For any $R$\+module $M$, there is a natural isomorphism
of $R$\+modules
$$
 H_0\Hom_R(\Tel^\bu(R,\s),M)\simeq\Delta_I(M).
$$ \par
\textup{(c)} For any $I$\+contramodule $R$\+module $P$, the morphism
of complexes\/ $\Tel^\bu(R,\s)\rarrow R$ induces a quasi-isomorphism
of complexes of $R$\+modules
$$
 P\lrarrow\Hom_R(\Tel^\bu(R,\s),P).
$$
\end{lem}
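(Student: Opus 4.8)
The plan is to reduce all three parts to the one-generator case by means of the tensor-product decomposition $\Tel^\bu(R,\s)=\Tel^\bu(R,s_1)\ot_R\dotsb\ot_R\Tel^\bu(R,s_m)$ together with the quasi-isomorphism $\Tel^\bu(R,\s)\rarrow C_\s^\bu(R)\sptilde$, which transports the facts about the \v Cech complex from Section~\ref{derived-torsion-modules} to the telescope side. Since $\Tel^\bu(R,\s)$ is a bounded complex of free (hence projective) $R$\+modules quasi-isomorphic to the bounded complex of flat modules $C_\s^\bu(R)\sptilde$, the complex $\Hom_R(\Tel^\bu(R,\s),N)$ computes $\boR\Hom_R(C_\s^\bu(R)\sptilde,N)$ for any $R$\+module or complex $N$; I will use this identification throughout. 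As the working criterion for an $I$\+contramodule I take the vanishing $\Ext_R^i(R[s_j^{-1}],P)=0$ for $i=0,1$ and all~$j$, i.e.\ $\boR\Hom_R(R[s_j^{-1}],P)=0$ for every~$j$.

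For part~(b) I first treat a single element $s$. Replacing $\Tel^\bu(R,s)$ by the homotopy equivalent $T^\bu(R,s)$ and dualizing, one identifies $\Hom_R(\Tel^\bu(R,s),M)$ with the two-term complex $\prod_{n\ge1}M\xrightarrow{\ \phi_M^s\ }\prod_{n\ge0}M$, whose degree-zero homology is precisely $\operatorname{coker}\phi_M^s=\Delta_s(M)$ by the construction in Proposition~\ref{functor-delta}. For several generators I argue by induction, writing $\Tel^\bu(R,\s)=\Tel^\bu(R,s_1)\ot_R\Tel^\bu(R,\s'')$ with $\s''=(s_2,\dotsc,s_m)$ and using the adjunction $\Hom_R(\Tel^\bu(R,\s),M)\simeq\Hom_R(\Tel^\bu(R,s_1),K^\bu)$ for $K^\bu=\Hom_R(\Tel^\bu(R,\s''),M)$. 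Because $\Hom_R(\Tel^i(R,s_1),{-})$ is a product, hence exact, the degree-zero homology of the total complex commutes with passage to $H_0K^\bu$, giving $H_0\Hom_R(\Tel^\bu(R,\s),M)\simeq\Delta_{s_1}(H_0K^\bu)\simeq\Delta_{s_1}\Delta_{\s''}(M)=\Delta_I(M)$, naturally in~$M$.

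For part~(a) I avoid any direct analysis of the higher kernels and instead establish the Ext-vanishing criterion for all cohomology modules simultaneously. Fixing a generator $s_j$, the decisive computation is
$$
 \boR\Hom_R\bigl(R[s_j^{-1}],\,\Hom_R(\Tel^\bu(R,\s),M)\bigr)\simeq
 \boR\Hom_R\bigl(R[s_j^{-1}]\ot_R^{\boL}C_\s^\bu(R)\sptilde,\,M\bigr),
$$
and here $R[s_j^{-1}]\ot_R C_\s^\bu(R)\sptilde$ contains the tensor factor $C_{\{s_j\}}^\bu(R)\sptilde[s_j^{-1}]$, which is contractible as recorded in the proof of Lemma~\ref{cech-complex}(a); hence the derived tensor product is acyclic and the right-hand side vanishes. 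The hyper-Ext spectral sequence $\Ext_R^p(R[s_j^{-1}],H^q\Hom_R(\Tel^\bu(R,\s),M))\Rightarrow 0$ is concentrated in the two columns $p=0,1$ (as $R[s_j^{-1}]$ has projective dimension at most~$1$), so it degenerates at $E_2$; its vanishing forces $\Ext_R^0=\Ext_R^1=0$ on each cohomology module, i.e.\ every $H^q$ is an $s_j$\+contramodule. Since $j$ was arbitrary, each homology module is an $I$\+contramodule.

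For part~(c) the natural map $P\rarrow\Hom_R(\Tel^\bu(R,\s),P)$ is induced by the composite $\Tel^\bu(R,\s)\rarrow C_\s^\bu(R)\sptilde\rarrow R$, so under the identification $\Hom_R(\Tel^\bu(R,\s),P)\simeq\boR\Hom_R(C_\s^\bu(R)\sptilde,P)$ it is a quasi-isomorphism if and only if $\boR\Hom_R(C_\s^\bu(R),P)=0$, where $C_\s^\bu(R)=\cone(C_\s^\bu(R)\sptilde\rarrow R)$ is the plain \v Cech complex. Each term of $C_\s^\bu(R)$ is a finite direct sum of localizations $R[s_S^{-1}]$ over nonempty $S$; choosing $j\in S$ and using $\boR\Hom_R(R[s_S^{-1}],P)\simeq\boR\Hom_R(R[s_{S\setminus\{j\}}^{-1}],\boR\Hom_R(R[s_j^{-1}],P))$ together with $\boR\Hom_R(R[s_j^{-1}],P)=0$ (valid since $P$ is an $s_j$\+contramodule), each such term is annihilated, whence $\boR\Hom_R(C_\s^\bu(R),P)=0$ by induction on the length of the complex via its stupid truncations. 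The main obstacle is precisely the higher homology in part~(a): these modules admit no convenient explicit description, and a hands-on verification of the contramodule equations would be awkward, so the real content is the device of collapsing the whole question to the acyclicity of $C_{\{s_j\}}^\bu(R)\sptilde[s_j^{-1}]$; its legitimacy rests on the routine but essential identification of $\Hom_R(\Tel^\bu(R,\s),{-})$ with $\boR\Hom_R(C_\s^\bu(R)\sptilde,{-})$, with the degree-zero bookkeeping and naturality in part~(b) being a secondary point to check carefully.
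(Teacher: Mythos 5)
Your proof is correct and follows essentially the same route as the paper's: the decisive input in all three parts is the contractibility of $C_\s^\bu(R)\sptilde[s_j^{-1}]$ transported through the Hom--tensor adjunction, with your $\boR\Hom_R(R[s_j^{-1}],{-})$ playing exactly the role of the paper's two-term free resolution $T^\bu(R,s_j)'\subset\Tel^\bu(R,s_j)$ of $R[s_j^{-1}]$, and with the same identification of $\Delta_s$ with $H_0\Hom_R(T^\bu(R,s),{-})$ in part~(b). The differences are presentational only: your hyper-Ext spectral sequence in~(a), the cone triangle over $C_\s^\bu(R)\sptilde\rarrow R$ in~(c), and the explicit induction in~(b) spell out steps the paper asserts directly.
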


\begin{proof}
 Part~(a): for any element $s\in R$, denote by $T^\bu(R,s)'$
the subcomplex of the two-term complex $\Tel^\bu(R,s)$ spanned by all
the generators~$\delta_n$ with the exception of
the generator~$\delta_0$ in degree~$0$.
 The complex $T^\bu(R,s)'$ a free $R$\+module resolution of
the $R$\+module $R[s^{-1}]$.
 Consequently, an $R$\+module $P$ is an $s$\+contramodule if and only
if the two-term complex $\Hom_R(T^\bu(R,s)',P)$ is acyclic, that is
the $R$\+module morphism $\Hom_R(T^1(R,s)',P)\rarrow\Hom_R(T^0(R,s)',P)$
is an isomorphism.
 It follows that a complex of $R$\+modules $P^\bu$ has $s$\+contramodule
cohomology modules if and only if the morphism of complexes
$\Hom_R(T^1(R,s)',P^\bu)\rarrow\Hom_R(T^0(R,s)',P^\bu)$ is
a quasi-isomorphism, that is the complex $\Hom_R(T^\bu(R,s)',P^\bu)$
is acyclic.
 Now for any $R$\+module $M$ and any $1\le j\le m$ one has
$\Hom_R(T^\bu(R,s_j)',\Hom_R(\Tel^\bu(R,\s),M))\simeq
\Hom_R(\Tel^\bu(R,\s)\ot_R T^\bu(R,s_j)'\;M)$, and the complex
$\Tel^\bu(R,\s)\ot_R T^\bu(R,s_j)'$, being a complex of free
$R$\+modules quasi-isomorphic to the contractible complex
$C_\s^\bu(R)\sptilde[s_j^{-1}]$ (see the proof of
Lemma~\ref{cech-complex}(a)), is also contractible.

 Part~(b): the functor $\Delta_s$ from the proof of
Proposition~\ref{functor-delta} is isomorphic to the functor
$M\longmapsto H_0\Hom_R(T^\bu(R,s),M)$ by construction, as
the map~$\phi_M^s$ is precisely the differential in
the two-term complex $\Hom_R(T^\bu(R,s),M)$.
 Consequently, the functor $\Delta_I$ is isomorphic to
the functor $M\longmapsto H_0\Hom_R(T^\bu(R,\s),M)$ when the ideal $I$
is generated by a sequence of elements $s_1$,~\dots, $s_m\in R$.
 The latter functor is isomorphic to the functor
$M\longmapsto H_0\Hom_R(\Tel^\bu(R,\s),M)$, because the complexes
$\Tel^\bu(R,\s)$ and $T^\bu(R,\s)$ are homotopy equivalent.

 Part~(c): all the terms of the complexes on the both sides of
the quasi-isomorphism $\Tel^\bu(R,\s)\rarrow C_\s^\bu(R)\sptilde$ have
the property that all the modules $\Ext_R^n$ from them into any
$I$\+contramodule $R$\+module $P$ in the positive degrees $n>0$
vanish, so the complexes $\Hom_R(\Tel^\bu(R,\s),P)$ and
$\Hom_R(C_\s^\bu(R)\sptilde,P)$ are quasi-isomorphic.
 Furthermore, all the terms of the complex $C_\s^\bu(R)$ in the kernel
of the natural morphism of complexes $C_\s^\bu(R)\sptilde\rarrow R$
have the property that all the modules $\Ext_R^n$ from them into
any $I$\+contramodule $R$\+module $P$ in all the degrees $n\ge0$
vanish, so the complex $\Hom_R(C_\s^\bu(R)\sptilde,P)$ is
quasi-isomorphic to~$P$.
\end{proof}

 In particular, we have obtained another proof of the fact that
the property of an $R$\+module to be a contramodule with respect
to each element~$s_j$ in a finite sequence $s_1$,~\dots, $s_m\in R$
only depends on the radical $\sqrt{I}\subset R$ of the ideal $I$
generated by the elements $s_1$,~\dots,~$s_m$ in~$R$.
 Indeed, according to Lemma~\ref{telescope-complex}(b) together with
the proof of Proposition~\ref{functor-delta}, an $R$\+module $P$ is
a contramodule with respect to $s_1$,~\dots, $s_m$ if and only if it has
the form $H_0\Hom_R(\Tel^\bu(R,\s),M)$ for some $R$\+module~$M$.
 Since the complexes $\Tel^\bu(R,\s)$ for various sequences~$\s$
corresponding to the same ideal $\sqrt{I}$ are homotopy
equivalent~\cite[Theorem~6.1]{PSY}, it follows that the contramodule
property only depends on the ideal~$\sqrt{I}$.
 (Alternatively, one can use Lemma~\ref{telescope-complex}(a) and~(c).)

 A projective system of $R$\+modules $M_1\larrow M_2\larrow M_3\larrow
\dotsb$ indexed by the positive integers is said to be \emph{pro-zero}
if for every $l\ge1$ there exists $n>l$ such that the composition
of maps $M_l\larrow\dotsb\larrow M_n$ vanishes.
 Both the countable projective limit functor and its (first and only)
derived functor vanish on pro-zero projective systems,
$\varprojlim_n M_n=0=\varprojlim^1_nM_n$.

 The two-term complex $\Tel^\bu(R,s)$ is the union of its subcomplexes
of free $R$\+modules $\Tel_n^\bu(R,s)$ generated by the symbols
$\delta_0$,~\dots, $\delta_n$ in the degrees~$0$ and~$1$.
 Accordingly, the complex $\Tel^\bu(R,\s)$ is the union of its
subcomplexes of finitely generated free $R$\+modules $\Tel_n^\bu(R,s_1)
\ot_R\dotsb\ot_R\Tel_n^\bu(R,s_m)$.
 The dual complexes of finitely generated free $R$\+modules
$\Hom_R(\Tel_n^\bu(R,\s),R)$, concentrated in the homological degrees
from~$0$ to~$m$, form a projective system.
 A finite sequence of elements~$\s$ in a commutative ring $R$ is called
\emph{weakly proregular} if the projective system of homology modules
$H_i\Hom_R(\Tel_n^\bu(R,\s),R)$ is pro-zero for every $i>0$
(see~\cite[Definition~2.3]{Sch} or~\cite[Definition~4.21 and
Lemma~5.7]{PSY}.
 The following lemma shows that this definition is equivalent to
the one that we used in Section~\ref{derived-torsion-modules}.

\begin{lem}
 Let $M^1_\bu\larrow M^2_\bu\larrow M^3_\bu\larrow\dotsb$ be
a projective system of complexes of $R$\+modules.
 Then the projective system of homology modules $H_i(M^1_\bu)\larrow
H_i(M^2_\bu)\larrow H_i(M^3_\bu)\larrow\dotsb$ is pro-zero
if and only if for any injective $R$\+module $J$ one has\/
$\varinjlim_n H^i\Hom_R(M^n_\bu,J)=0$.
\end{lem}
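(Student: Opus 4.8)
The plan is to reduce the assertion to an elementary statement about projective systems of $R$\+modules by exploiting the exactness of the functor $\Hom_R({-},J)$ for an injective module~$J$. Since $J$ is injective, $\Hom_R({-},J)$ is an exact contravariant functor, so by the usual argument with the short exact sequences of cycles and boundaries it converts the homology of a complex into the cohomology of the dual complex: for each~$n$ there is a natural isomorphism
$$
 H^i\Hom_R(M^n_\bu,J)\simeq\Hom_R(H_i(M^n_\bu),J).
$$
Writing $N_n=H_i(M^n_\bu)$, this is a projective system $N_1\larrow N_2\larrow N_3\larrow\dotsb$ of $R$\+modules, and applying $\Hom_R({-},J)$ turns it into an inductive system. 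As the displayed isomorphism is natural in~$n$, it yields an isomorphism of inductive systems and hence of their direct limits, so the whole problem becomes the claim that the projective system $(N_n)_n$ is pro-zero if and only if $\varinjlim_n\Hom_R(N_n,J)=0$ for every injective $R$\+module~$J$. First I would establish this reformulation; it is the only place where the complexes $M^n_\bu$ enter, and everything downstream concerns the modules $N_n$ alone.

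For the ``only if'' direction, suppose $(N_n)_n$ is pro-zero. Denoting by $f_{n,l}\:N_n\rarrow N_l$ the transition map for $n\ge l$, the pro-zero condition provides, for each~$l$, an index $n>l$ with $f_{n,l}=0$. Applying $\Hom_R({-},J)$, the corresponding map $\Hom_R(N_l,J)\rarrow\Hom_R(N_n,J)$ vanishes. Since every element of $\varinjlim_n\Hom_R(N_n,J)$ is represented by some $\phi\in\Hom_R(N_l,J)$, and its image in $\Hom_R(N_n,J)$ is $\phi\circ f_{n,l}=0$, every element dies at a finite stage and the direct limit vanishes.

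The hard part will be the ``if'' direction, where the delicate point is to choose the injective module~$J$ appropriately, and separately for each index. Fix~$l$ and embed $N_l$ into an injective module, say its injective hull, via a monomorphism $\iota\:N_l\rarrow J$; such a~$J$ exists because $R\modl$ has enough injectives. The homomorphism $\iota$ represents an element of $\varinjlim_n\Hom_R(N_n,J)$, which is zero by hypothesis, so there is $n>l$ with $\iota\circ f_{n,l}=0$ in $\Hom_R(N_n,J)$. Because $\iota$ is a monomorphism, this forces $f_{n,l}=0$. As $l$ was arbitrary, the projective system $(N_n)_n$ is pro-zero, which completes the equivalence. The entire argument is short once the reduction of the first paragraph is in place; the only genuine subtlety is recognizing that one is free to use a different~$J$ for each~$l$ and that any injective module containing $N_l$ does the job.
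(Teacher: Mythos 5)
Your proof is correct and follows essentially the same route as the paper's: the identification $H^i\Hom_R(M^n_\bu,J)\simeq\Hom_R(H_i(M^n_\bu),J)$ via exactness of $\Hom_R({-},J)$, the immediate ``only if'' direction, and, for the ``if'' direction, embedding $H_i(M^l_\bu)$ into an injective module and using that this monomorphism must die at some finite stage of the inductive limit, forcing the transition map $H_i(M^n_\bu)\rarrow H_i(M^l_\bu)$ to vanish. Your observation that one may choose a different injective $J$ for each index~$l$ is precisely the point implicit in the paper's argument as well.
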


\begin{proof}
 This is essentially~\cite[Lemma~2.4]{Sch} or~\cite[Theorem~4.24]{PSY}.
 One has $H^i\Hom_R(M^n_\bu,J)\simeq\Hom_R(H_i(M^n_\bu),J)$,
so the ``only if'' assertion is obvious.
 To prove the ``if'', pick an injective $R$\+module morphism from
the $R$\+module $H_i(M^l_\bu)$ to some injective $R$\+module~$J$.
 This morphism represents an element of $H^i\Hom_R(M^l_\bu,J)$,
which has to die in the inductive limit; so there exists $n>k$
such that the composition $H_i(M^n_\bu)\rarrow H_i(M^l_\bu)
\rarrow J$ is a zero morphism.
 It follows that the morphism $H_i(M^n_\bu)\rarrow H_i(M^l_\bu)$
is zero. \hbadness=1050
\end{proof}

\begin{ex}
 It was already mentioned in Section~\ref{derived-torsion-modules}
that by~\cite[Corollary~6.2]{PSY} the weak proregularity is
a property of the radical $\sqrt{I}\subset R$ of the ideal $I$ that
a finite sequence $s_1$,~\dots, $s_m\in R$ generates and not of
the sequence itself.
 We have also seen that any ideal in a Noetherian commutative ring
is weakly proregular.
 Here is a simple observation providing a class of examples of weakly
proregular ideals in non-Noetherian rings~\cite[Remark~1.14]{Sha}
(cf.~\cite[Example~4.35 and Theorem~6.5]{PSY}).

 Let $R\rarrow T$ be a morphism of commutative rings such that $T$
is a flat module over~$R$.
 Let $\s$~be a finite sequence of elements in the ring~$R$, and
let $I$ be the ideal generated by~$\s$ in~$R$.
 Then the ideal $TI$ in the ring $T$ is weakly proregular whenever
the ideal $I$ in the ring $R$ is.
 In other words, the image of a finite sequence~$\s$ is weakly
proregular in $T$ if a sequence~$\s$ is weakly proregular in~$R$.
 Indeed, one has $H_i\Hom_T(\Tel_n^\bu(T,\s),T)\simeq
T\ot_R H_i\Hom_R(\Tel_n^\bu(R,\s),R)$, so for any given~$i$
the projective system $H_i\Hom_T(\Tel_n^\bu(T,\s),T)$ is pro-zero
whenever the projective system $H_i\Hom_R(\Tel_n^\bu(R,\s),R)$ is.
\end{ex}

\begin{lem} \label{agree-on-flats}
 For any weakly proregular finitely generated ideal $I$ in a commutative
ring $R$, the restriction of the functor $\Delta_I$ to the full
subcategory of flat $R$\+modules $F$ in $R\modl$ is an exact functor
isomorphic to the functor of $I$\+adic completion $\Lambda_I\:
F\longmapsto\varprojlim_n F/I^nF$.
\end{lem}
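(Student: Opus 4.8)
The plan is to reduce everything to a homological computation with the telescope complex and then feed in weak proregularity through a $\varprojlim$--$\varprojlim^1$ exact sequence. By Lemma~\ref{telescope-complex}(b) the functor $\Delta_I$ is computed as $\Delta_I(M)\simeq H_0\Hom_R(\Tel^\bu(R,\s),M)$, so I would start from the presentation of $\Tel^\bu(R,\s)$ as the union $\varinjlim_n\Tel_n^\bu(R,\s)$ of its subcomplexes of finitely generated free modules. Since $\Hom_R$ turns this filtered colimit in the first argument into a limit, and since $\Tel_n^\bu(R,\s)$ consists of finitely generated free modules, I would write
$$
\Hom_R(\Tel^\bu(R,\s),F)\simeq\varprojlim_n\bigl(K_n^\bu\ot_R F\bigr),\qquad K_n^\bu:=\Hom_R(\Tel_n^\bu(R,\s),R),
$$
where each $K_n^\bu$ is a complex of finitely generated free $R$\+modules concentrated in homological degrees $0$ through~$m$, and the transition maps $K_{n+1}^\bu\rarrow K_n^\bu$ are termwise surjective, being duals of termwise split inclusions of free modules.

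The heart of the argument is the next step. For a flat module $F$, flatness gives $H_i(K_n^\bu\ot_R F)\simeq H_i(K_n^\bu)\ot_R F$ for all~$i$. By the definition of weak proregularity recalled above, the projective system $H_i(K_n^\bu)=H_i\Hom_R(\Tel_n^\bu(R,\s),R)$ is pro-zero for every $i>0$, and tensoring a pro-zero system with $F$ keeps it pro-zero; hence $H_i(K_n^\bu\ot_R F)$ is pro-zero for $i>0$. Because the tower $(K_n^\bu\ot_R F)_n$ is termwise surjective, the Milnor exact sequence
$$
0\rarrow\varprojlim^1_n H_{i+1}(K_n^\bu\ot_R F)\rarrow H_i\Hom_R(\Tel^\bu(R,\s),F)\rarrow\varprojlim_n H_i(K_n^\bu\ot_R F)\rarrow0
$$
applies. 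For $i>0$ both outer terms vanish, since $\varprojlim$ and $\varprojlim^1$ both annihilate pro-zero systems; thus $\Hom_R(\Tel^\bu(R,\s),F)$ has homology only in degree~$0$, and for $i=0$ the sequence collapses to $\Delta_I(F)\simeq\varprojlim_n H_0(K_n^\bu\ot_R F)$.

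It remains to identify this limit with the $I$\+adic completion. A direct computation of the single-element complex shows $H_0\Hom_R(\Tel_n^\bu(R,s),R)\simeq R/s^nR$, with the transition maps being the natural projections; passing to the tensor product over $s_1,\dotsc,s_m$ (so that the bottom homology of the total complex is the tensor product of the bottom homologies, by right exactness of $\ot$) gives $H_0(K_n^\bu)\simeq R/(s_1^n,\dotsc,s_m^n)$. Since the cokernel functor commutes with $\ot_R F$, one has $H_0(K_n^\bu\ot_R F)\simeq F/(s_1^n,\dotsc,s_m^n)F$, and the cofinality $I^{mn}\subseteq(s_1^n,\dotsc,s_m^n)\subseteq I^n$ identifies $\varprojlim_n F/(s_1^n,\dotsc,s_m^n)F$ with $\varprojlim_n F/I^nF=\Lambda_I(F)$, naturally in~$F$.

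Finally, for exactness I would argue as follows. Each term of $\Tel^\bu(R,\s)$ is a free module, so $\Hom_R(\Tel^\bu(R,\s),{-})$ is, termwise, a product of copies of its argument, hence an exact functor on $R\modl$; a short exact sequence $0\rarrow F'\rarrow F\rarrow F''\rarrow0$ of flat modules therefore yields a short exact sequence of complexes, whose homology long exact sequence, together with the vanishing of $H_1\Hom_R(\Tel^\bu(R,\s),F'')$ established above, forces $0\rarrow\Lambda_I(F')\rarrow\Lambda_I(F)\rarrow\Lambda_I(F'')\rarrow0$ to be exact (right exactness being automatic, as $\Delta_I$ is a left adjoint by Proposition~\ref{functor-delta}). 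The main obstacle is the vanishing step: this is precisely where weak proregularity is indispensable, and it must be combined with flatness to pull $F$ through the homology and with the termwise surjectivity of the tower to license the $\varprojlim^1$ sequence. By contrast, the identification of $H_0$ with the classical completion is comparatively routine, hinging only on the cofinality of the ideals $(s_1^n,\dotsc,s_m^n)$ and $I^n$.
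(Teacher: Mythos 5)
Your proof is correct and follows essentially the same route as the paper's: both reduce to $\Delta_I(F)\simeq H_0\Hom_R(\Tel^\bu(R,\s),F)$ via Lemma~\ref{telescope-complex}(b), write this Hom complex as the termwise surjective tower $\varprojlim_n\Hom_R(\Tel_n^\bu(R,\s),F)$, use flatness to identify $H_i\Hom_R(\Tel_n^\bu(R,\s),F)\simeq H_i\Hom_R(\Tel_n^\bu(R,\s),R)\ot_RF$ and weak proregularity to kill the higher terms and the $\varprojlim^1$, and finish with $H_0\simeq F/(\s^n)F$ and the cofinality of $(\s^n)$ with $I^n$. You merely spell out explicitly (the Milnor sequence, the $H_0$ computation, and the exactness via the long exact homology sequence) what the paper compresses by citing the computations in~\cite{PSY}.
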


\begin{proof}
 In view of Lemma~\ref{telescope-complex}(b), this
is~\cite[computations~(2\+-3) in the proof of Theorem~5.21]{PSY}.
 The complex $\Tel^\bu_n(R,\s)$ is homotopy equivalent to the tensor
product over~$j$ of the two-term complexes of free $R$\+modules
$(R\to s_j^{-n}R)$.
 Therefore, one has $H_0\Hom_R(\Tel^\bu_n(R,\s),M)\simeq M/(\s^n)M$
for any $R$\+module $M$, where $(\s^n)$~denotes the ideal generated
by the sequence of elements $s_1^n$,~\dots, $s_m^n$ in~$R$.

 Clearly, $\varprojlim_n M/(\s^nM)\simeq\varprojlim_n M/I^nM$ for
any $R$\+module $M$; it remains to check the isomorphism
$$\textstyle
 H_0\varprojlim_n\Hom_R(\Tel_n^\bu(R,\s),F)\simeq
 \varprojlim_nH_0\Hom_R(\Tel_n^\bu(R,\s),F)
$$
for a flat $R$\+module~$F$.
 Since the terms of the subcomplexes $\Tel_n^\bu(R,\s)$ are direct
summands of the respective terms of the ambient complex
$\Tel^\bu(R,\s)$, the transition maps in the projective system
of complexes $\Hom_R(\Tel_n^\bu(R,\s),M)$ are surjective for
any $R$\+module~$M$.
 Finally, if the projective system $H_i\Hom_R(\Tel_n^\bu(R,\s),R)$
is pro-zero for every $i>0$, then the projective systems
$$
 H_i\Hom_R(\Tel_n^\bu(R,\s),F)\simeq
 H_i\Hom_R(\Tel_n^\bu(R,\s),R)\ot_RF
$$
are also pro-zero for any flat $R$\+module $F$ and every~$i>0$.
 Consequently, $\varprojlim^1_n H_i\Hom_R(\Tel_n^\bu(R,\s),F)=0$
for all~$i\in\Z$.
\end{proof}

\begin{ex}
 It is clear from the above computation that for any ideal $I$
generated by a finite sequence of elements~$\s$ in $R$ and any
$R$\+module $M$ there is a natural surjective morphism of
$R$\+modules $\Delta_I(M)\rarrow\Lambda_I(M)$ with the kernel
isomorphic to $\varprojlim^1_n H_1\Hom_R(\Tel_n^\bu(R,\s),M)$.
 The following example shows that this kernel may be nontrivial
even for the $R$\+module $M=R$ when the weak proregularity is not
assumed.

 Let the sequence $\s$ consist of a single element $s\in R$.
 Then the homology group $H_1\Hom_R(\Tel_n^\bu(R,s),M)$ is isomorphic
to the submodule of elements annihilated by~$s^n$ in~$M$; for any
$n>l$, the map $H_1\Hom_R(\Tel_n^\bu(R,s),M)\rarrow
H_1\Hom_R(\Tel_l^\bu(R,s),M)$ is the multiplication with~$s^{n-l}$.
 Now let $R$ be the commutative algebra over a field~$k$ generated
by an infinite sequence of elements $x_1$, $x_2$, $x_3$,~\dots\
and an additional generator~$s$ with the relations $x_ix_j=0$ and
$s^ix_i=0$ for all $i$, $j\ge1$.
 Then the submodule of elements annihilated by~$s^n$ in $R$ is
generated by the elements $x_1$,~\dots, $x_n$, $sx_{n+1}$,
$s^2x_{n+2}$,~\dots, and the map of multiplication with $s^{n-l}$
never annihilates this submodule entirely for $l\ge1$, so
the projective system $H_1\Hom_R(\Tel_n^\bu(R,s),R)$ is not pro-zero
and the one-element sequence $s\in R$ is not weakly proregular.

 Furthermore, one can compute the $R$\+module $\varprojlim^1_n
H_1\Hom_R(\Tel_n^\bu(R,s),R)$ as being isomorphic to the module
of formal power series $W[[s]]$ in the variable~$s$ with
the coefficient space $W$ equal to the derived projective limit
$\prod_i kx_i/\bigoplus_i kx_i$ of the projective system of
vector spaces $kx_n\oplus kx_{n+1}\oplus\dotsb$ and their obvious
embeddings.
 The generators $x_i\in R$ act by zero in the $R$\+module $W[[s]]$,
while the generator $s\in R$ acts in the power series in~$s$ in
the standard way.
\end{ex}

\begin{lem} \label{delta-finite-homol-dim}
\textup{(a)} For any weakly proregular finite sequence of
elements\/~$\s$ in a commutative ring $R$, the complex\/
$\Hom_R(\Tel^\bu(R,\s),M)$ assigned to an $R$\+module $M$ computes
the left derived functor\/ $\boL_*\Delta_I(M)$ of the right exact
functor\/ $\Delta_I$, viewed as taking values in the category
$R\modl$. \par
\textup{(b)} For any weakly proregular finitely generated ideal~$I$
in a commutative ring~$R$, (the left derived functor\/
$\boL_*\Delta_I$ of) the right exact functor $\Delta_I\:R\modl\rarrow
R\modl_{I\ctra}$ has finite homological dimension not exceeding
the minimal number of generators of the ideal~$I$.
\end{lem}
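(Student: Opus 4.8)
The plan is to dualize the argument that proves Lemma~\ref{gamma-finite-homol-dim}. There the left exact functor $\Gamma_I$ was handled by resolving a module $M$ on the right by injectives $J^\bu$ and forming the bicomplex $C_\s^\bu(J^\bu)\sptilde$; here, since $\Delta_I$ is right exact, I would resolve $M$ on the \emph{left} by a projective resolution $P_\bu\rarrow M$ and work with the bicomplex $\Hom_R(\Tel^\bu(R,\s),P_\bu)$. All the terms of $\Tel^\bu(R,\s)$ are free $R$\+modules, so each functor $\Hom_R(\Tel^i(R,\s),{-})$ is exact; and all the terms of $P_\bu$ are flat, which is what will let the telescope complex compute $\Delta_I$ on them.

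For part~(a), I would compare the total complex of $\Hom_R(\Tel^\bu(R,\s),P_\bu)$ with two complexes via its two filtrations. Taking homology first in the $P_\bu$ direction: because each $\Hom_R(\Tel^i(R,\s),{-})$ is exact and $P_\bu\rarrow M$ is a resolution, the vertical homology is concentrated in resolution degree~$0$ and equals $\Hom_R(\Tel^\bu(R,\s),M)$, so the total complex is quasi-isomorphic to $\Hom_R(\Tel^\bu(R,\s),M)$. Taking homology first in the $\Tel$ direction: for each flat $P_n$ I would invoke Lemma~\ref{agree-on-flats} (and the computation in its proof) to conclude that $\Hom_R(\Tel^\bu(R,\s),P_n)$ has homology concentrated in degree~$0$, equal to $\Delta_I(P_n)\simeq\Lambda_I(P_n)$; hence the total complex is also quasi-isomorphic to $\Delta_I(P_\bu)$, which by definition computes $\boL_*\Delta_I(M)$. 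Since $\Tel^\bu(R,\s)$ is bounded (cohomological degrees $0$ to~$m$) and $P_\bu$ is bounded below, both filtrations converge and the two identifications yield the claim.

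The step I expect to carry the real content is the assertion that $\Hom_R(\Tel^\bu(R,\s),F)$ is concentrated in homological degree~$0$ for flat $F$, this being precisely where weak proregularity enters; but it is exactly the computation underlying Lemma~\ref{agree-on-flats}, where one writes $\Hom_R(\Tel^\bu(R,\s),F)=\varprojlim_n\Hom_R(\Tel_n^\bu(R,\s),F)$, notes that the transition maps are termwise surjective so that the derived limit contributes nothing, and uses that the homology systems $H_i\Hom_R(\Tel_n^\bu(R,\s),F)$ are pro-zero for $i>0$. Once this is in hand, the rest of part~(a) is the formal bicomplex bookkeeping above. For part~(b), I would simply observe that $\Hom_R(\Tel^\bu(R,\s),M)$ lives in homological degrees $0$ through~$m$, so by part~(a) one has $\boL_i\Delta_I(M)=0$ for $i>m$; choosing $\s$ to be a minimal set of generators makes $m$ the minimal number of generators of~$I$. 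Finally, since the homology modules in question are $I$\+contramodules by Lemma~\ref{telescope-complex}(a), and the embedding $R\modl_{I\ctra}\rarrow R\modl$ is exact and sends nonzero objects to nonzero objects, the vanishing of $\boL_i\Delta_I$ is detected equally in $R\modl$ and in $R\modl_{I\ctra}$, so the bound on the homological dimension of $\Delta_I\:R\modl\rarrow R\modl_{I\ctra}$ follows exactly as in the proof of Lemma~\ref{gamma-finite-homol-dim}(b).
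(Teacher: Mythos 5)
Your argument is correct and is essentially the paper's own proof: the paper establishes part~(a) from the vanishing $H_i\Hom_R(\Tel^\bu(R,\s),F)=0$ for flat $F$ and $i>0$ (cited as \cite[Theorem~5.21]{PSY}, with the same $\varprojlim$/pro-zero computation as in the proof of Lemma~\ref{agree-on-flats}) together with the identification $H_0\Hom_R(\Tel^\bu(R,\s),M)\simeq\Delta_I(M)$ of Lemma~\ref{telescope-complex}(b), and then proceeds ``in the standard way dual to the proof of Lemma~\ref{gamma-finite-homol-dim}(a)'', which is precisely the projective-resolution bicomplex bookkeeping you spell out. Part~(b) is likewise deduced in the paper from the length of the telescope complex and the fact that the embedding $R\modl_{I\ctra}\rarrow R\modl$ is exact and takes nonzero objects to nonzero objects, exactly as in your final paragraph.
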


\begin{proof}
 Part~(a): according to~\cite[Theorem~5.21]{PSY}, one has
$H_i\Hom_R(\Tel^\bu(R,\s),F)=0$ for any flat $R$\+module $F$ and all
$i>0$ (we have essentially explained as much already in the proof of
Lemma~\ref{agree-on-flats}).
 Taking also into account Lemma~\ref{telescope-complex}(b),
the argument proceeds from this point in the (standard) way dual to
the proof of Lemma~\ref{gamma-finite-homol-dim}(a) above.
 As the embedding $R\modl_{I\ctra}\rarrow R\modl$ is an exact functor
taking nonzero objects to nonzero objects, part~(b) follows.
 (Cf.~\cite[Corollaries~5.25 and~5.27]{PSY}.)
\end{proof}

\begin{lem} \label{contramodules-adjusted}
 For any weakly proregular finitely generated ideal~$I$ in a commutative
ring $R$, the derived functor of the functor $\Delta_I$ takes
$I$\+contramodules to themselves, i.~e., there is a natural isomorphism
$\Delta_I(P)\simeq P$ and one has\/ $\boL_n\Delta_I(P)=0$ for all $n>0$
and any $I$\+contramodule $R$\+module~$P$.
\end{lem}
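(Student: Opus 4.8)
The plan is to deduce this directly from the two computational lemmas already established, so that the statement becomes essentially a bookkeeping matter of homological degrees. The substantive work has all been done: the acyclicity of the telescope-Hom complex on contramodules and the identification of $\Hom_R(\Tel^\bu(R,\s),{-})$ with $\boL\Delta_I$.

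First I would invoke Lemma~\ref{delta-finite-homol-dim}(a), which asserts that for any $R$\+module $M$ the complex $\Hom_R(\Tel^\bu(R,\s),M)$ computes the left derived functor $\boL_*\Delta_I(M)$. Specializing to $M=P$ an $I$\+contramodule, this identifies $\boL_n\Delta_I(P)$ with the homology module $H_n\Hom_R(\Tel^\bu(R,\s),P)$ for every~$n$. Next I would apply Lemma~\ref{telescope-complex}(c): for an $I$\+contramodule $P$, the morphism $\Tel^\bu(R,\s)\rarrow R$ induces a quasi-isomorphism $P\rarrow\Hom_R(\Tel^\bu(R,\s),P)$, with $P$ placed in homological degree~$0$. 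Hence the homology of $\Hom_R(\Tel^\bu(R,\s),P)$ is concentrated in degree~$0$, where it equals~$P$.

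Combining the two, I conclude $\boL_n\Delta_I(P)=H_n\Hom_R(\Tel^\bu(R,\s),P)=0$ for all $n>0$. The remaining isomorphism is then $\Delta_I(P)=\boL_0\Delta_I(P)=H_0\Hom_R(\Tel^\bu(R,\s),P)\simeq P$, using that $\Delta_I$ is right exact so that $\boL_0\Delta_I=\Delta_I$, together with the identification $H_0\Hom_R(\Tel^\bu(R,\s),{-})\simeq\Delta_I$ from Lemma~\ref{telescope-complex}(b). The naturality of this isomorphism can alternatively be read off from Proposition~\ref{functor-delta}: since $R\modl_{I\ctra}$ is a reflective subcategory of $R\modl$ with reflector $\Delta_I$, the adjunction unit $P\rarrow\Delta_I(P)$ is an isomorphism for every object $P$ already lying in $R\modl_{I\ctra}$.

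I do not anticipate any genuine obstacle here. The only point requiring care is the matching of conventions: $\Hom_R(\Tel^\bu(R,\s),{-})$ is a homological, nonnegatively graded complex, so a quasi-isomorphism from $P$ concentrated in degree~$0$ forces the vanishing precisely in the positive homological degrees that index $\boL_n\Delta_I$, which is exactly the range claimed in the statement.
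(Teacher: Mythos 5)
Your proof is correct and follows exactly the paper's own route: the paper proves the lemma by combining Lemma~\ref{telescope-complex}(c) with Lemma~\ref{delta-finite-homol-dim}(a), and likewise observes that the isomorphism $\Delta_I(P)\simeq P$ already follows from $\Delta_I$ being left adjoint to the fully faithful embedding $R\modl_{I\ctra}\rarrow R\modl$, with only the vanishing of $\boL_n\Delta_I(P)$ for $n>0$ depending on weak proregularity. Your degree bookkeeping is the same as the paper's implicit argument, so there is nothing to add.
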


\begin{proof}
 This is Lemma~\ref{telescope-complex}(c) together with
Lemma~\ref{delta-finite-homol-dim}(a).
 Notice that the isomorphism $\Delta_I(P)\simeq P$ follows already
from the fact that the functor $\Delta_I$ is left adjoint to
the fully faithful embedding functor $R\modl_{I\ctra}\rarrow R\modl$.
 It is only the second assertion of the lemma that depends on
the weak proregularity assumption.
\end{proof}

 Now we are in the position to prove the main result of this section.

\begin{thm} \label{contra-fully-faithful}
 Let $R$ be a commutative ring and $I\subset R$ be a weakly proregular
finitely generated ideal.
 Then for any symbol\/ $\st=b$, $+$, $-$, $\varnothing$, $\abs+$,
$\abs-$, $\ctr$, or\/~$\abs$, the triangulated functor\/
$\sD^\st(R\modl_{I\ctra})\rarrow\sD^\st(R\modl)$ induced by
the embedding of abelian categories $R\modl_{I\ctra}\rarrow R\modl$
is fully faithful.
\end{thm}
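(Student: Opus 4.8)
The plan is to mirror the proof of Theorem~\ref{torsion-fully-faithful}, dualizing throughout: the right-exact right-adjoint functor $\Gamma_I$ is replaced by the right-exact left-adjoint functor $\Delta_I$, the right (injective-type) resolutions by left (flat-type) resolutions, and the coderived symbol~$\co$ by the contraderived symbol~$\ctr$ in the list of admissible~$\st$.

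First I would introduce the full subcategory $R\modl_{I\ctra\adj}\subset R\modl$ of objects \emph{adjusted to}~$\Delta_I$, consisting of all $R$\+modules $M$ with $\boL_n\Delta_I(M)=0$ for $n>0$, or equivalently (by Lemma~\ref{delta-finite-homol-dim}(a)) with $H_n\Hom_R(\Tel^\bu(R,\s),M)=0$ for $n>0$. Using the long exact sequence of the derived functor $\boL\Delta_I$, one checks that $R\modl_{I\ctra\adj}$ is closed under extensions, kernels of surjective morphisms, and infinite products in $R\modl$, so it inherits an exact category structure with exact functor of infinite product from the abelian category $R\modl$. By Lemma~\ref{agree-on-flats} every flat $R$\+module belongs to $R\modl_{I\ctra\adj}$, and by Lemma~\ref{delta-finite-homol-dim}(b) the functor $\Delta_I$ has finite homological dimension; hence every $R$\+module admits a finite left resolution of uniformly bounded length by objects of $R\modl_{I\ctra\adj}$ (for instance by flats). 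The left-resolution version of \cite[Proposition~A.5.6]{Pcosh} then gives, for each symbol~$\st$ on our list, an equivalence of triangulated categories
$$
 \sD^\st(R\modl_{I\ctra\adj})\simeq\sD^\st(R\modl).
$$

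Next I would note that the restriction of $\Delta_I$ to the exact subcategory $R\modl_{I\ctra\adj}$ is an exact functor $\Delta_I\:R\modl_{I\ctra\adj}\rarrow R\modl_{I\ctra}$; applying it termwise to complexes yields the left derived functor $\boL\Delta_I\:\sD^\st(R\modl)\rarrow\sD^\st(R\modl_{I\ctra})$. Since the formation of derived functors in the sense of Deligne~\cite[n$^{\mathrm{os}}$\,1.2.1\+-2]{Del} preserves adjunctions, $\boL\Delta_I$ is left adjoint to the functor $\sD^\st(R\modl_{I\ctra})\rarrow\sD^\st(R\modl)$ induced by the embedding of abelian categories. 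By Lemma~\ref{contramodules-adjusted} every $I$\+contramodule is adjusted to $\Delta_I$, so that $R\modl_{I\ctra}\subset R\modl_{I\ctra\adj}$ and $\Delta_I$ returns each $I$\+contramodule unchanged with no higher derived terms. Therefore the composition
$$
 \sD^\st(R\modl_{I\ctra})\rarrow\sD^\st(R\modl)\rarrow
 \sD^\st(R\modl_{I\ctra}),
$$
whose second arrow is $\boL\Delta_I$, is isomorphic to the identity functor on $\sD^\st(R\modl_{I\ctra})$. It follows, exactly as in the torsion case, that the embedding-induced functor is fully faithful, while $\boL\Delta_I$ is a Verdier quotient functor.

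The step requiring the most care will be the verification that $R\modl_{I\ctra\adj}$ is closed under kernels of surjective morphisms---so that its exact structure supplies the short exact sequences needed in the resolution argument---and that the resolution equivalence holds uniformly across \emph{all} the listed symbols, including the contraderived symbol~$\ctr$ and the absolute symbols. These closure properties are dual to those established in the torsion case and follow from the vanishing of $\boL_n\Delta_I$ for $n>0$ on adjusted objects together with the long exact sequence; the essential point is that one now resolves on the \emph{left} by flat modules, which is exactly why the contraderived category~$\ctr$---and not the coderived category~$\co$---occurs here, and why the left-resolution form of \cite[Proposition~A.5.6]{Pcosh} is the appropriate input.
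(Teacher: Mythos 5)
Your proposal is correct and takes essentially the same route as the paper's own proof: the adjusted subcategory $R\modl_{I\ctra\adj}=\{M: \boL_n\Delta_I(M)=0 \text{ for } n>0\}$, the equivalence $\sD^\st(R\modl_{I\ctra\adj})\simeq\sD^\st(R\modl)$ via \cite[Proposition~A.5.6]{Pcosh}, the adjunction between $\boL\Delta_I$ and the embedding-induced functor, and Lemma~\ref{contramodules-adjusted} to identify the composition with the identity, concluding full faithfulness and that $\boL\Delta_I$ is a Verdier quotient functor. The one small imprecision is the parenthetical ``for instance by flats'': an arbitrary module need not admit a \emph{finite} resolution by flat modules, so the uniformly bounded left resolution by adjusted objects is obtained by truncating a flat resolution, the leftmost syzygy being adjusted thanks to the finite homological dimension of Lemma~\ref{delta-finite-homol-dim}(b) --- which is precisely how the paper invokes that lemma.
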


\begin{proof}
 The argument is similar to the proof of
Theorem~\ref{torsion-fully-faithful}.
 In the case of a Noetherian ring $R$ and the derived categories
$\sD^\b$ or $\sD^-$, the assertion follows from~\cite[Propositions~B.9.1
and~B.10.1]{Pweak} (see also~\cite[Proposition~2.2.2(b)]{Prev}).
 A proof of the general case proceeds as follows.

 Consider the full subcategory $R\modl_{I\ctra\adj}\subset R\modl$
consisting of all the $R$\+modules $M$ for which $\boL_n\Delta_I(M)=0$
for all $n>0$.
 The full subcategory $R\modl_{I\ctra\adj}$ is closed under extensions,
kernels of surjective morphisms, and infinite products in $R\modl$;
and, according to Lemma~\ref{delta-finite-homol-dim}(b),
every $R$\+module has a finite left resolution of uniformly bounded
length by objects from $R\modl_{I\ctra\adj}$.
 Hence the subcategory $R\modl_{I\ctra\adj}$ inherits the exact category
structure of the abelian category $R\modl$, and for any symbol~$\st$
on our list the triangulated functor $\sD^\st(R\modl_{I\ctra\adj})
\rarrow\sD^\st(R\modl)$ is an equivalence of triangulated
categories~\cite[Proposition~A.5.6]{Pcosh},
$$
 \sD^\st(R\modl_{I\ctra\adj})\simeq\sD^\st(R\modl).
$$

 Obviously, the restriction of the functor $\Delta_I$ to the full
exact subcategory $R\modl_{I\ctra\adj}\subset R\modl$ is an exact
functor $\Delta_I\:R\modl_{I\ctra\adj}\rarrow R\modl_{I\ctra}$.
 Applying the functor $\Delta_I$ to complexes of modules from
the category $R\modl_{I\ctra\adj}$ termwise, we obtain the left
derived functor
$$
 \boL\Delta_I\:\sD^\st(R\modl)\rarrow\sD^\st(R\modl_{I\ctra}).
$$
 The derived functor $\boL\Delta_I$ is left adjoint to the triangulated
functor $\sD^\st(R\modl_{I\ctra})\allowbreak \rarrow\sD^\st(R\modl)$
induced by the embedding of abelian categories $R\modl_{I\ctra}\rarrow
R\modl$ \cite[Lemma~8.3]{Psemi}.

 Furthermore, the abelian subcategory $R\modl_{I\ctra}\subset R\modl$
is contained in the exact subcategory $R\modl_{I\ctra\adj}$,
$$
 R\modl_{I\ctra}\subset R\modl_{I\ctra\adj}
$$
by Lemma~\ref{contramodules-adjusted}, and the functor $\Delta_I$ takes
$I$\+contramodule $R$\+modules to themselves.
 Therefore, the composition of adjoint functors
$$
 \sD^\st(R\modl_{I\ctra})\rarrow\sD^\st(R\modl)\rarrow
 \sD^\st(R\modl_{I\ctra})
$$
is isomorphic to the identity functor on $\sD^\st(R\modl_{I\ctra})$.
 It follows immediately that the functor $\sD^\st(R\modl_{I\ctra})
\rarrow\sD^\st(R\modl)$ is fully faithful, while the functor
$\boL\Delta_I\:\sD^\st(R\modl)\rarrow\sD^\st(R\modl_{I\ctra})$
is a Verdier quotient functor.
\end{proof}

\begin{cor} \label{complexes-with-contramodule-cohomology}
 Let $R$ be a commutative ring and $I\subset R$ be a weakly proregular
finitely generated ideal.
 Then for any symbol\/ $\st=\b$, $+$, $-$, or\/~$\varnothing$,
the triangulated functor\/ $\sD^\st(R\modl_{I\ctra})\rarrow
\sD^\st(R\modl)$ identifies the derived category\/
$\sD^\st(R\modl_{I\ctra})$ with the full subcategory\/
$\sD^\st_{I\ctra}(R\modl)\subset\sD^\st(R\modl)$ consisting of all
the complexes with the cohomology modules belonging to $R\modl_{I\ctra}$.
\end{cor}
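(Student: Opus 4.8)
The plan is to mirror the proof of Corollary~\ref{complexes-with-torsion-cohomology} almost verbatim, replacing the right adjoint $\boR\Gamma_I$ by the left adjoint $\boL\Delta_I$ throughout. One inclusion is essentially formal: since $R\modl_{I\ctra}\subset R\modl$ is an abelian subcategory closed under kernels and cokernels, the cohomology modules of any complex of $I$\+contramodules are again $I$\+contramodules, so the image of the functor $\sD^\st(R\modl_{I\ctra})\rarrow\sD^\st(R\modl)$ is automatically contained in $\sD^\st_{I\ctra}(R\modl)$. The content lies in the reverse inclusion.

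For the reverse inclusion I would extract from the proof of Theorem~\ref{contra-fully-faithful} the reflection picture: the embedding $\sD^\st(R\modl_{I\ctra})\rarrow\sD^\st(R\modl)$ is fully faithful with left adjoint $\boL\Delta_I$, so its essential image consists precisely of those complexes $M^\bu$ for which the adjunction unit $M^\bu\rarrow\boL\Delta_I(M^\bu)$, computed inside $\sD^\st(R\modl)$, is an isomorphism; equivalently, the complexes fixed by the composition $\sD^\st(R\modl)\rarrow\sD^\st(R\modl_{I\ctra})\rarrow\sD^\st(R\modl)$. It then remains to show that this unit is an isomorphism whenever all the cohomology modules of $M^\bu$ belong to $R\modl_{I\ctra}$.

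The key step is this last verification, and it is where weak proregularity enters through finite homological dimension. By Lemma~\ref{delta-finite-homol-dim}(b) the functor $\boL\Delta_I$ has homological dimension at most the number of generators of $I$, hence it is bounded and is computed in a band of fixed width. I would run the hypercohomology spectral sequence $\boL_{-p}\Delta_I\bigl(H^q(M^\bu)\bigr)\Rightarrow H^{p+q}\bigl(\boL\Delta_I(M^\bu)\bigr)$, concentrated in the range $-d\le p\le0$ with $d$ the homological dimension. When every $H^q(M^\bu)$ is an $I$\+contramodule, Lemma~\ref{contramodules-adjusted} gives $\boL_n\Delta_I\bigl(H^q(M^\bu)\bigr)=0$ for $n>0$ and $\Delta_I\bigl(H^q(M^\bu)\bigr)\simeq H^q(M^\bu)$, so the spectral sequence degenerates and the unit induces an isomorphism on each cohomology module; equivalently, one argues by dévissage along the canonical filtration, the base case of a single $I$\+contramodule being handled directly by Lemma~\ref{contramodules-adjusted}.

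I expect the main obstacle to be the unbounded case $\st=\varnothing$, where the dévissage along the cohomology of $M^\bu$ is an infinite process: here one genuinely needs the finite homological dimension of $\boL\Delta_I$ to guarantee convergence of the spectral sequence (so that each term $H^{p+q}$ receives contributions from only finitely many $E_2$ entries) and thus to conclude that the unit is a degreewise isomorphism. This is also, I believe, the structural reason the statement is confined to the conventional derived categories $\b$, $+$, $-$, $\varnothing$ and excludes the coderived and absolute versions: the characterization of the image purely in terms of cohomology modules is a feature of the conventional derived category, whereas in the derived categories of the second kind an acyclic complex of $I$\+contramodules need not be contractible, and the reflection does not detect membership by cohomology alone.
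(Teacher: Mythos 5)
Your proposal is correct and follows essentially the same route as the paper: the easy inclusion, the identification of the essential image with the complexes fixed by the composition $\sD^\st(R\modl)\rarrow\sD^\st(R\modl_{I\ctra})\rarrow\sD^\st(R\modl)$ extracted from the proof of Theorem~\ref{contra-fully-faithful}, and then the reduction to cohomology modules via Lemma~\ref{contramodules-adjusted} and the finite homological dimension of $\boL\Delta_I$ from Lemma~\ref{delta-finite-homol-dim}(b). The only cosmetic difference is that where you run the hypercohomology spectral sequence (or d\'evissage along the canonical filtration) explicitly, the paper simply cites Hartshorne's way-out functor argument \cite[Proposition~I.7.1]{Har}, which encapsulates precisely that d\'evissage, including the unbounded case you single out.
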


\begin{proof}
 The proof is similar to that of
Corollary~\ref{complexes-with-torsion-cohomology}.
 It is obvious that the image of the functor $\sD^\st(R\modl_{I\ctra})
\rarrow\sD^\st(R\modl)$ is contained in $\sD^\st_{I\ctra}(R\modl)$.
 To prove the converse inclusion, notice that, according to the proof
of Theorem~\ref{contra-fully-faithful}, the image of the fully
faithful functor $\sD^\st(R\modl_{I\ctra})\rarrow\sD^\st(R\modl)$
consists precisely of all the complexes fixed by the composition of
adjoint functors $\sD^\st(R\modl)\rarrow\sD^\st(R\modl_{I\ctra})
\rarrow\sD^\st(R\modl)$.
 Since the functor $\boL\Delta_I$ has finite homological dimension,
by the way-out functor argument of~\cite[Proposition~I.7.1]{Har}
a complex viewed as an object of the conventional derived category
is fixed by this composition whenever its cohomology modules are.
\end{proof}

\Section{MGM Duality Theorem} \label{duality-theorem-secn}

 Let $R$ be a commutative ring and $\s$~be a finite sequence of
its elements $s_1$,~\dots, $s_m\in R$.
 We recall the constructions of complexes $C_\s^\bu(R)$ and
$C_\s^\bu(R)\sptilde$ from Section~\ref{derived-torsion-modules}.

\begin{lem} \label{tensor-product-contractible}
 The tensor product complex $C_\s^\bu(R)\ot_R C_\s^\bu(R)\sptilde$ is
a contractible complex of $R$\+modules.
\end{lem}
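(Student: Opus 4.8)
The plan is to reduce the statement to the contractibility facts already established in the proof of Lemma~\ref{cech-complex}(a), exploiting the multiplicative structure of the \v Cech complex. First I would recall the two ingredients. On the one hand, $C_\s^\bu(R)\sptilde$ is the tensor product $C_{\{s_1\}}^\bu(R)\sptilde\ot_R\dotsb\ot_R C_{\{s_m\}}^\bu(R)\sptilde$ of the two-term complexes $K_j=(R\to R[s_j^{-1}])$. On the other hand, for each $j$ the localized complex $C_\s^\bu(R)\sptilde[s_j^{-1}]=C_\s^\bu(R)\sptilde\ot_R R[s_j^{-1}]$ is contractible, since its factor $K_j[s_j^{-1}]=(R[s_j^{-1}]\to R[s_j^{-1}])$ is a contractible complex and tensoring a contractible complex with any complex (here the remaining factors $K_i$) yields a contractible complex; this is exactly the content of the proof of Lemma~\ref{cech-complex}(a).

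The first step is a termwise contractibility observation. Every term $C_\s^\bu(R)^p$ of the \v Cech complex is a finite direct sum of localizations $R[s_S^{-1}]$ indexed by nonempty subsets $S\subset\{1,\dotsc,m\}$. For any such $S$ and any chosen $j\in S$, the element $s_j$ already acts invertibly on $R[s_S^{-1}]$, so there is a natural isomorphism $R[s_S^{-1}]\ot_R C_\s^\bu(R)\sptilde\simeq R[s_S^{-1}]\ot_R\bigl(C_\s^\bu(R)\sptilde[s_j^{-1}]\bigr)$. Applying the additive functor $R[s_S^{-1}]\ot_R({-})$ to the contractible complex $C_\s^\bu(R)\sptilde[s_j^{-1}]$ keeps it contractible, and passing to finite direct sums shows that the whole ``column'' $C_\s^\bu(R)^p\ot_R C_\s^\bu(R)\sptilde$ is contractible for every~$p$.

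The second step is to assemble these partial contractions into a contraction of the total complex. I would view $C_\s^\bu(R)\ot_R C_\s^\bu(R)\sptilde$ as the total complex of the bicomplex with terms $C_\s^\bu(R)^p\ot_R C_\s^\bu(R)\sptilde^q$ and filter it by the \v Cech degree, setting $F_k=\bigoplus_{p\ge k}C_\s^\bu(R)^p\ot_R C_\s^\bu(R)\sptilde$. Since the \v Cech differential raises~$p$, each $F_k$ is a subcomplex; the filtration is finite because $C_\s^\bu(R)$ lives in degrees $0$ through $m-1$, and each inclusion $F_{k+1}\to F_k$ is split in every degree, its quotient being precisely the contractible column from the first step. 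Hence each $F_{k+1}\to F_k$ fits into a triangle in $\Hot(R\modl)$ with contractible cone, so it is an isomorphism there; composing these isomorphisms identifies $C_\s^\bu(R)\ot_R C_\s^\bu(R)\sptilde$ with the zero complex in $\Hot(R\modl)$, which is the asserted contractibility. The genuinely non-formal input is imported wholesale from Lemma~\ref{cech-complex}(a); the one point that must be handled carefully is the passage from columnwise to total contractibility, where the boundedness of $C_\s^\bu(R)$ in the \v Cech direction and the degreewise splitness of the filtration are exactly what make the homotopy-category argument go through.
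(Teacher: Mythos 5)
Your proof is correct and takes essentially the same approach as the paper's: both reduce the contractibility of the total complex to that of the columns $C_\s^i(R)\ot_R C_\s^\bu(R)\sptilde$ (the paper phrases this as obtaining the tensor product complex by iterated shifts and cones in the homotopy category, which your finite, degreewise split filtration by the \v Cech degree merely makes explicit), and both then import the key non-formal input, the contractibility of $C_\s^\bu(R)\sptilde[s_j^{-1}]$, from the proof of Lemma~\ref{cech-complex}(a). There are no gaps.
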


\begin{proof}
 It suffices to show that for every term $C_\s^i(R)$ of the complex
$C_\s^\bu(R)$ the tensor product $C_\s^i(R)\ot_R C_\s^\bu(R)\sptilde$
is a contractible complex, because the complex $C_\s^\bu(R)\ot_R
C_\s^\bu(R)\sptilde$ can be obtained from the complexes
$C_\s^i(R)\ot_R C_\s^\bu(R)\sptilde$ by iterating the operations of
shift and cone in the homotopy category of complexes of $R$\+modules.
 Hence it is enough to check that the complex
$C_\s^\bu(R)\sptilde[s_j^{-1}]$ is contractible for every~$j$.
 This was already done in the proof of
Lemma~\ref{cech-complex}(a).
\end{proof} 

 Following~\cite[Section~8]{PSY}, let us endow the complex $C_\s^\bu(R)$
with the \v Cech/singular cochain multiplication, making it
a (noncommutative) DG\+ring.
 The natural morphism of complexes $k\:R\rarrow C_\s^\bu(R)$ is
a morphism of DG\+rings with the image lying in the center of
$C_\s^\bu(R)$, making $C_\s^\bu(R)$ a DG\+algebra over~$R$.
 The following lemma is our version of~\cite[Lemmas~4.29 and~7.9]{PSY}.

\begin{lem}  \label{tensor-product-equivalences}
\textup{(a)} The two morphisms of complexes
$$
 C_\s^\bu(R)\sptilde\ot_RC_\s^\bu(R)\sptilde\birarrow C_\s^\bu(R)\sptilde
$$
induced by the natural morphism $C_\s^\bu(R)\sptilde\rarrow R$ are
homotopy equivalences of complexes of $R$\+modules. \par
\textup{(b)} The three morphisms of complexes
$$
 C_\s^\bu(R)\birarrow C_\s^\bu(R)\ot_R C_\s^\bu(R)\rarrow C_\s^\bu(R)
$$
provided by the unit and multiplication in the DG\+algebra
$C_\s^\bu(R)$ are homotopy equivalences of complexes of $R$\+modules.
\end{lem}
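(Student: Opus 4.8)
The plan is to derive both statements formally from Lemma~\ref{tensor-product-contractible}, using that for any fixed complex $K$ the functor $K\ot_R({-})$ is a triangulated endofunctor of the homotopy category $\Hot(R\modl)$: it commutes with the shift and carries mapping cones to mapping cones, hence takes distinguished triangles to distinguished triangles (no flatness or quasi-isomorphism considerations are needed, since we work throughout in $\Hot(R\modl)$ rather than a derived category). Write $C=C_\s^\bu(R)$ and $T=C_\s^\bu(R)\sptilde$. By the very definition of $T$ as the cocone of the coaugmentation $k\:R\rarrow C$, there is a distinguished triangle
$$
 T\rarrow R\rarrow C\rarrow T[1]
$$
in $\Hot(R\modl)$, whose first arrow is the natural morphism $\epsilon\:T\rarrow R$ and whose second arrow is the unit~$k$ of the DG\+algebra $C$. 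By Lemma~\ref{tensor-product-contractible} the complex $C\ot_R T$, which coincides with $T\ot_R C$, is contractible, i.e.\ is a zero object of $\Hot(R\modl)$. The entire argument then rests on the elementary observation that, in a distinguished triangle one of whose three vertices is a zero object, the morphism between the remaining two vertices is an isomorphism.

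For part~(a), I would tensor the triangle above on each side with~$T$. Applying $T\ot_R({-})$ produces a distinguished triangle
$$
 T\ot_R T\xrightarrow{\id\ot\epsilon}T\rarrow T\ot_R C\rarrow(T\ot_R T)[1],
$$
whose third vertex $T\ot_R C$ is contractible; hence the map $\id\ot\epsilon$ --- the first of the two arrows in question --- is a homotopy equivalence. Applying $({-})\ot_R T$ instead gives the distinguished triangle
$$
 T\ot_R T\xrightarrow{\epsilon\ot\id}T\rarrow C\ot_R T\rarrow(T\ot_R T)[1],
$$
whose third vertex is again contractible, so the second arrow $\epsilon\ot\id$ is a homotopy equivalence as well. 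This proves part~(a).

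For part~(b), I would instead tensor the triangle with~$C$. Applying $C\ot_R({-})$ yields the distinguished triangle
$$
 C\ot_R T\rarrow C\xrightarrow{\id\ot k}C\ot_R C\rarrow(C\ot_R T)[1],
$$
whose \emph{first} vertex $C\ot_R T$ vanishes in $\Hot(R\modl)$; therefore the following morphism, the unit map $\id\ot k\:C\rarrow C\ot_R C$, is a homotopy equivalence. Applying $({-})\ot_R C$ does the same for the other unit map $k\ot\id$. For the multiplication $\mu\:C\ot_R C\rarrow C$, I would invoke unitality of the DG\+algebra: the composite of $\mu$ with either unit map equals $\id_C$. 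Since $\id\ot k$ is a homotopy equivalence and $\mu\circ(\id\ot k)=\id_C$, the map $\mu$ is homotopic to a homotopy inverse of $\id\ot k$, and is therefore itself a homotopy equivalence.

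The substance of the lemma is thus entirely contained in Lemma~\ref{tensor-product-contractible} (and ultimately in the contractibility of $C_\s^\bu(R)\sptilde[s_j^{-1}]$ established in the proof of Lemma~\ref{cech-complex}(a)); granting it, the present statement is a purely formal manipulation in the triangulated category $\Hot(R\modl)$. The only point requiring care is the bookkeeping: in part~(a) the contractible complex occurs as the \emph{third} vertex, forcing the leading morphism to be invertible, whereas in part~(b) it occurs as the \emph{first} vertex, forcing the \emph{next} morphism to be invertible, and one must track which tensor factor carries the structure map $\epsilon$ or the unit~$k$.
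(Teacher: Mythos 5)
Your proof is correct and is essentially the paper's argument: the paper's own proof consists of the single remark that the lemma ``follows immediately from Lemma~\ref{tensor-product-contractible},'' and your triangle manipulations in $\Hot(R\modl)$ --- tensoring the distinguished triangle $C_\s^\bu(R)\sptilde\rarrow R\rarrow C_\s^\bu(R)\rarrow C_\s^\bu(R)\sptilde[1]$ with $C_\s^\bu(R)\sptilde$ or $C_\s^\bu(R)$ and using strict unitality plus a two-out-of-three argument for the multiplication --- are precisely the routine verification left implicit there. The bookkeeping (which vertex of each rotated triangle is contractible, and which tensor factor carries $\epsilon$ or~$k$) is handled correctly.
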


\begin{proof}
 Follows immediately from Lemma~\ref{tensor-product-contractible}.
\end{proof}

 Our next goal is to rewrite the geometric arguments of
Section~\ref{geometric-subcategories-equivalence} in the algebraic
language.
 A key ingredient is the derived category $\sD^\st(C_\s^\bu(R)\modl)$
of left DG\+modules over the DG\+ring $C_\s^\bu(R)$, which is used
in lieu of the derived category $\sD^\st(U\qcoh)\simeq\sD^\st(U\ctrh)$
of quasi-coherent sheaves/contraherent cosheaves on the open subscheme
$U=X\setminus Z$, where $X=\Spec R$ and $Z=\Spec R/I$.

 The DG\+ring $C_\s^\bu(R)$ being positively cohomologically graded,
derived categories of the second kind of DG\+modules over it may
differ from the conventional derived categories even for bounded
DG\+modules (cf.~\cite[Section~3.4, last paragraph of Section~0.4,
and Example~6.6]{Pkoszul}).
 So we restrict our exposition to the four conventional
derived categories $\sD^\b$, \,$\sD^+$, \,$\sD^-$, and~$\sD$.
 Notice that some, though not all, of the related geometric results
of~\cite[Sections~4.6 and~4.8]{Pcosh} are applicable to the absolute
derived categories $\sD^{\abs+}$, \,$\sD^{\abs-}$, and $\sD^{\abs}$ as well.

 So let $\st$ be one of the conventional derived category symbols $\b$,
$+$, $-$, or~$\varnothing$.
 The derived categories $\sD^\st(C_\s^\bu(R)\modl)$ are defined by
inverting the classes of quasi-isomorphisms in the homotopy categories
of (respectively bounded) left DG\+modules over $C_\s^\bu(R)$.
 Denote by $k_*\:\sD^\st(C_\s^\bu(R)\modl)\rarrow\sD^\st(R\modl)$
the functor of restriction of scalars with respect to the DG\+ring
morphism $k\:R\rarrow C_\s^\bu(R)$.

\begin{prop}  \label{two-adjoints-faithful-functor}
\textup{(a)} The triangulated functor~$k_*$ has a left adjoint functor
$k^*\:\sD^\st(R\modl)\rarrow\sD^\st(C_\s^\bu(R)\modl)$ and a right
adjoint functor\/ $\boR k^!\:\sD^\st(R\modl)\allowbreak\rarrow
\sD^\st(C_\s^\bu(R)\modl)$. \par
\textup{(b)} The compositions $k^*\circ k_*$ and\/ $\boR k^!\circ k_*$
are isomorphic to the identity functors on the category\/
$\sD^\st(C_\s^\bu(R)\modl)$, functor~$k_*$ is fully faithful, and
the functors~$k^*$ and\/~$\boR k^!$ are Verdier quotient functors.
\end{prop}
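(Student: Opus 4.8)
The plan is to treat this as the standard homotopy\+idempotent (smashing) localization picture: construct the two adjoints of $k_*$ explicitly, verify that the counit $k^*k_*\rarrow\Id$ is an isomorphism using the contractibility supplied by Lemma~\ref{tensor-product-contractible}, and then read off fully faithfulness of $k_*$ together with the two quotient statements by purely formal adjunction arguments. Write $C=C_\s^\bu(R)$ for brevity. For part~(a): restriction of scalars along the DG\+ring morphism $k\:R\rarrow C$ already has, on the level of DG\+modules, a left adjoint given by extension of scalars $C\ot_R({-})$ and a right adjoint given by coinduction $\Hom_R(C,{-})$. Since $C$ is a finite complex of flat $R$\+modules, extension preserves quasi\+isomorphisms and descends directly to $k^*=C\ot_R({-})\:\sD^\st(R\modl)\rarrow\sD^\st(C\modl)$ for every symbol on our list. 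Each term of $C$ is a localization $R[s_{j_0}^{-1},\dotsc]$, of projective dimension at most~$m$, and $C$ is bounded; hence coinduction has finite cohomological amplitude and its right derived functor $\boR k^!=\boR\Hom_R(C,{-})$ is defined on every $\sD^\st$ (using homotopy\+injective resolutions in the unbounded case). Deriving the two underived adjunctions, which preserves adjoint pairs, yields $k^*\dashv k_*\dashv\boR k^!$ on $\sD^\st$.

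The heart of the matter is the counit computation in part~(b). Under the identification $k^*k_*N\simeq C\ot_R N$, the counit is the action map $\mu_N\:C\ot_R N\rarrow N$, and the extension-restriction unit $\eta_N\:N\rarrow C\ot_R N$ induced by~$k$ satisfies $\mu_N\eta_N=\Id_N$. Tensoring the cocone triangle $C_\s^\bu(R)\sptilde\rarrow R\overset{k}{\rarrow}C\rarrow C_\s^\bu(R)\sptilde[1]$ over~$R$ with $N$ produces a distinguished triangle $C_\s^\bu(R)\sptilde\ot_R N\rarrow N\overset{\eta_N}{\rarrow}C\ot_R N\rarrow C_\s^\bu(R)\sptilde\ot_R N[1]$, so $\eta_N$ is an isomorphism precisely when $C_\s^\bu(R)\sptilde\ot_R N$ is acyclic. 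To see the latter I would use $N\simeq C\ot_C^{\boL}N$ together with associativity of the derived tensor product to write $C_\s^\bu(R)\sptilde\ot_R N\simeq\bigl(C_\s^\bu(R)\sptilde\ot_R C\bigr)\ot_C^{\boL}N$; the complex $C_\s^\bu(R)\sptilde\ot_R C$ is contractible by Lemma~\ref{tensor-product-contractible}, whence the whole expression vanishes. Thus $\eta_N$, and therefore its retraction $\mu_N$, is an isomorphism in $\sD^\st(C\modl)$, i.e.\ $k^*k_*\simeq\Id$.

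The remaining assertions are then formal. That the counit $k^*k_*\rarrow\Id$ for the adjunction $k^*\dashv k_*$ is an isomorphism means exactly that the right adjoint $k_*$ is fully faithful; equivalently, for the adjunction $k_*\dashv\boR k^!$ the unit $\Id\rarrow\boR k^!k_*$ is an isomorphism, which delivers $\boR k^!k_*\simeq\Id$ at no extra cost. Finally, a fully faithful functor $k_*$ equipped with a left adjoint $k^*$ exhibits $\sD^\st(C\modl)$ as the reflective Verdier quotient $\sD^\st(R\modl)/\ker(k^*)$, so $k^*$ is a Verdier quotient functor; dually, $k_*$ fully faithful with right adjoint $\boR k^!$ makes $\boR k^!$ a coreflective Verdier quotient functor.

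The step I expect to be the main obstacle is the associativity isomorphism $C_\s^\bu(R)\sptilde\ot_R N\simeq(C_\s^\bu(R)\sptilde\ot_R C)\ot_C^{\boL}N$ over the \emph{noncommutative, positively graded} DG\+ring~$C$: one must keep careful track of the left and right $C$\+module structures (the right $C$\+action coming from the right tensor factor), and verify that contractibility of $C_\s^\bu(R)\sptilde\ot_R C$ as a complex of right $C$\+modules indeed forces the derived tensor product with $N$ to vanish. Because $C$ is positively cohomologically graded, derived categories of the second kind over it are ill\+behaved, so it is essential that the entire argument stay within the conventional derived categories, where K\+flat and homotopy\+injective resolutions of DG\+modules are available. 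A secondary technical point is the good behavior of $\boR k^!$ on the unbounded category, which rests on the finite projective dimension of the localizations comprising~$C$ rather than on any perfectness, the localizations being infinitely generated over~$R$.
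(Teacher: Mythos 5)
Your proof is correct, but the engine of part~(b) is genuinely different from the paper's. You compute the counit directly: tensoring the cocone triangle $C_\s^\bu(R)\sptilde\rarrow R\rarrow C_\s^\bu(R)$ with a DG\+module $N^\bu$ reduces everything to the acyclicity of $C_\s^\bu(R)\sptilde\ot_RN^\bu$, which you obtain from the contractibility of $C_\s^\bu(R)\sptilde\ot_RC_\s^\bu(R)$ (Lemma~\ref{tensor-product-contractible}) via the associativity $C_\s^\bu(R)\sptilde\ot_RN^\bu\simeq(C_\s^\bu(R)\sptilde\ot_RC_\s^\bu(R))\ot^{\boL}_{C_\s^\bu(R)}N^\bu$; this needs K\+flat (semiflat) resolutions of DG\+modules over the noncommutative DG\+ring $C_\s^\bu(R)$, and your closing remarks correctly locate the delicate points (the $R$\+linear-but-not-$C$\+linear contracting homotopy only gives acyclicity, which is all the derived tensor vanishing needs, and acyclicity claims are independent of the boundedness symbol~$\st$). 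The paper instead never leaves formal adjunction territory: from Lemma~\ref{tensor-product-equivalences}(b) it gets $k_*k^*\simeq k_*k^*k_*k^*$, upgrades this to $k^*\simeq k^*k_*k^*$ by a represented-functors argument on the essential image of~$k^*$, then to $k_*\simeq k_*k^*k_*$, and finally invokes conservativity of the restriction functor between conventional derived categories to conclude $k^*k_*\simeq\Id$. Your route is shorter and makes the kernel-identifying functors $C_\s^\bu(R)\sptilde\ot_R{-}$ used in Theorem~\ref{complexes-with-t-c-cohomology-equivalence} visible already here, at the cost of importing resolution machinery over a positively cohomologically graded DG\+ring; the paper's route buys uniformity across all the listed symbols without ever resolving a $C_\s^\bu(R)$\+DG\+module (note it also implicitly uses the same conservativity you need when you pass from ``$\mu_{N^\bu}$ is a quasi-isomorphism of complexes of $R$\+modules'' to ``isomorphism in $\sD^\st(C_\s^\bu(R)\modl)$''). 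One small correction: the terms of $C_\s^\bu(R)$ have projective dimension at most~$1$, not~$m$, since $R[s_{j_1}^{-1},\dotsc,s_{j_k}^{-1}]=R[(s_{j_1}\dotsm s_{j_k})^{-1}]$ is a localization at a single element; this only strengthens your finite-amplitude argument for~$\boR k^!$.
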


\begin{proof}
 Part~(a): the functor~$k^*$ is easy to define, as $C_\s^\bu(R)$ is
a finite complex of flat $R$\+modules, so setting $k^*(M^\bu)=C_\s^\bu(R)
\ot_RM^\bu$ suffices for any symbol~$\st$.
 In the case of $\st=\varnothing$, both the adjoint functors
$\boL f^*$ and $\boR f^!$ exist for any morphism of DG\+rings~$f$
\cite[Section~1.7]{Pkoszul}.
 To construct the derived functor $\boR k^!$ for bounded derived
categories $\sD^\st$, notice that $C_\s^\bu(R)$ is a finite complex of
$R$\+modules of projective dimension at most~$1$, so embedding
a complex of $R$\+modules $M^\bu$ into a complex of injective
$R$\+modules $J^\bu$, replacing $M^\bu$ with the cocone $N^\bu$ of
the morphism $J^\bu\rarrow J^\bu/M^\bu$ and setting
$\boR k^!(M^\bu)=\Hom_R(C_\s^\bu(R),N^\bu)$ does the job.
 These constructions of the functors~$k_*$, $k^*$, and $\boR k^!$
are actually applicable to the derived categories with exotic
symbols $\st=\abs+$, $\abs-$, or~$\abs$ just as well.
 The constructions of the functors $k_*$ and~$k^*$ also work for
the coderived categories, while the constructions of the functors
$k_*$ and~$\boR k^!$ are applicable to the contraderived categories.

 To prove part~(b), notice that by
Lemma~\ref{tensor-product-equivalences}(b) the adjunction morphisms
$$
 k_*k^*\birarrow k_*k^*k_*k^*\rarrow k_*k^*
$$
are isomorphisms of functors on the category $\sD^\st(R\modl)$, \
$k_*k^*\simeq k_*k^*k_*k^*$.
 It follows that for any objects $M^\bu$ and $N^\bu\in\sD^\st(R\modl)$
the adjunction morphisms $k^*N^\bu\rarrow k^*k_*k^*N^\bu\rarrow
k^*N^\bu$ induce isomorphisms of Hom modules
$$
 \Hom_\sD(k^*M^\bu,k^*N^\bu)\rarrow\Hom_\sD(k^*M^\bu,k^*k_*k^*N^\bu)
 \rarrow\Hom_\sD(k^*M^\bu,k^*N^\bu)
$$
in the derived category $\sD^\st(C_\s^\bu(R)\modl)$.
 In other words, the adjunction morphisms induce isomorphisms of
the functors represented by the objects $k^*N^\bu$ and $k^*k_*k^*N^\bu$
on the essential image of the functor~$k^*$ (viewed as a full
subcategory in $\sD^\st(C_\s^\bu(R)\modl)$).
 As the objects $k^*N^\bu$ and $k^*k_*k^*N^\bu$ also belong to
this essential image, it follows that the adjunction morphisms
$$
 k^*N^\bu\rarrow k^*k_*k^*N^\bu\rarrow k^*N^\bu
$$
are isomorphisms of functors, $k^*\simeq k^*k_*k^*$.
 Hence the adjunction morphisms
$$
 k^*k_*\rarrow k^*k_*k^*k_*\birarrow k^*k_*
$$
are also isomorphisms.
 Applying the same argument with the roles of the two categories
$\sD^\st(R\modl)$ and $\sD^\st(C_\s^\bu(R)\modl)$ switched, one can see
that the adjunction morphisms
$$
 k_*\rarrow k_*k^*k_*\rarrow k_*
$$
are also isomorphisms of functors, $k_*\simeq k_*k^*k_*$.

 Finally, we observe that the restrictions of scalars are conservative
functors between conventional derived categories, i.~e., a morphism of
DG\+modules over $C_\s^\bu(R)$ is a quasi-isomorphism whenever it is
a quasi-isomorphism of DG\+modules over $R$ (cf.~\cite[Remark~8.4.3
and Section~8.4.4]{Psemi} for a discussion of the conservativity
problem for functors between derived categories of the second kind).
 The functor~$k_*$ being conservative and the natural morphism
$k_*k^*k_*B^\bu\rarrow k_*B^\bu$ being an isomorphism in
$\sD^\st(R\modl)$ for any DG\+module $B^\bu\in\sD^\st(C_\s^\bu(R)\modl)$,
we can conclude that the morphism $k^*k_*B^\bu\rarrow B^\bu$ is
an isomorphism in $\sD^\st(C_\s^\bu(R)\modl)$.

 Hence it follows that the functor~$k_*$ is fully faithful,
the functors~$k^*$ and $\boR k^!$ are Verdier quotient functors,
and the composition $\boR k^!\circ k_*$ is isomorphic to
the identity functor.
 Alternatively, the latter assertion can be proven directly in
the way similar to the above argument.
\end{proof}

 Now we are ready to prove our version of the MGM duality theorem
for commutative rings with finitely generated ideals.
 For any commutative ring $R$ with an ideal $I$ generated by a finite
sequence of elements $s_1$,~\dots, $s_m\in R$, consider the full
subcategory of complexes with $I$\+torsion cohomology modules
$$
 \sD^\st_{I\tors}(R\modl)\subset\sD^\st(R\modl)
$$
and the full subcategory of complexes with $I$\+contramodule
cohomology modules
$$
 \sD^\st_{I\ctra}(R\modl)\subset\sD^\st(R\modl)
$$
in the derived category of $R$\+modules.
 The essential image of the fully faithful functor~$k_*$
from Proposition~\ref{two-adjoints-faithful-functor}
$$
 k_*\sD^\st(C_\s^\bu(R)\modl)\subset\sD^\st(R\modl)
$$
is a third triangulated subcategory in $\sD^\st(R\modl)$ that is
of interest to us.

\begin{thm}  \label{complexes-with-t-c-cohomology-equivalence}
 The subcategory $k_*\sD^\st(C_\s^\bu(R)\modl)$ is the right orthogonal
complement to the subcategory\/ $\sD^\st_{I\tors}(R\modl)$ and
the left orthogonal complement to the subcategory\/
$\sD^\st_{I\ctra}(R\modl)$ in\/ $\sD^\st(R\modl)$.
 The passage to the Verdier quotient category by the triangulated
subcategory $k_*\sD^\st(C_\s^\bu(R)\modl)\subset\sD^\st(R\modl)$
establishes an equivalence between the triangulated categories\/
$\sD^\st_{I\tors}(R\modl)$ and\/ $\sD^\st_{I\ctra}(R\modl)$,
$$
 \sD^\st_{I\tors}(R\modl)\simeq
 \sD^\st(R\modl)/k_*\sD^\st(C_\s^\bu(R)\modl)\simeq
 \sD^\st_{I\ctra}(R\modl).
$$
\end{thm}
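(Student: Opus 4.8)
The plan is to realize $k_*\sD^\st(C_\s^\bu(R)\modl)$ as the common middle term of two semiorthogonal decompositions of $\sD^\st(R\modl)$---one cutting it against the torsion subcategory, the other against the contramodule subcategory---and then to read off the two Verdier quotient equivalences. Write $\mathcal U=k_*\sD^\st(C_\s^\bu(R)\modl)$, \ $\mathcal T=\sD^\st_{I\tors}(R\modl)$, and $\mathcal P=\sD^\st_{I\ctra}(R\modl)$; since $k_*$ is fully faithful by Proposition~\ref{two-adjoints-faithful-functor}, $\mathcal U$ is a triangulated subcategory. Applying the functors ${-}\ot_R M^\bu$ and $\boR\Hom_R({-},M^\bu)$ to the tautological triangle $C_\s^\bu(R)\sptilde\rarrow R\rarrow C_\s^\bu(R)\rarrow{}$ produces, functorially in $M^\bu\in\sD^\st(R\modl)$, two distinguished triangles
\begin{gather*}
 C_\s^\bu(R)\sptilde\ot_R M^\bu\rarrow M^\bu\rarrow k_*k^*M^\bu\rarrow{} \\
 k_*\boR k^!M^\bu\rarrow M^\bu\rarrow\boR\Hom_R(C_\s^\bu(R)\sptilde,M^\bu)\rarrow{}
\end{gather*}
in which the outer terms $k_*k^*M^\bu=C_\s^\bu(R)\ot_R M^\bu$ and $k_*\boR k^!M^\bu=\boR\Hom_R(C_\s^\bu(R),M^\bu)$ lie in $\mathcal U$ by the constructions of $k^*$ and $\boR k^!$ in Proposition~\ref{two-adjoints-faithful-functor}. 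The term $C_\s^\bu(R)\sptilde\ot_R M^\bu$ has $I$\+torsion cohomology by Lemma~\ref{cech-complex}(a), hence lies in $\mathcal T$, while $\boR\Hom_R(C_\s^\bu(R)\sptilde,M^\bu)\simeq\Hom_R(\Tel^\bu(R,\s),M^\bu)$ has $I$\+contramodule cohomology by Lemma~\ref{telescope-complex}(a), hence lies in $\mathcal P$.

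The heart of the matter is the pair of orthogonality relations $\Hom(\mathcal T,\mathcal U)=0$ and $\Hom(\mathcal U,\mathcal P)=0$ in $\sD^\st(R\modl)$. By the adjunctions $k^*\dashv k_*\dashv\boR k^!$ and the conservativity of $k_*$, these amount to the two vanishings $k^*T^\bu=C_\s^\bu(R)\ot_R T^\bu=0$ for $T^\bu\in\mathcal T$ and $\boR k^!P^\bu=\boR\Hom_R(C_\s^\bu(R),P^\bu)=0$ for $P^\bu\in\mathcal P$. I expect this to be the main obstacle, because the statement permits unbounded complexes and does not assume weak proregularity, so one cannot reduce to a single cohomology module by truncation. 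The remedy is that $C_\s^\bu(R)$ is a \emph{finite} complex whose terms are the flat modules $\bigoplus_{|S|=k+1}R[s_S^{-1}]$ (localizations at the elements $s_j$, $j\in S$, with $S\ne\varnothing$); filtering by \v Cech degree yields a finite filtration whose associated graded pieces are the ``columns''. On the torsion side each column $R[s_S^{-1}]\ot_R T^\bu\simeq T^\bu[s_S^{-1}]$ is acyclic, since localizing the $I$\+torsion modules $H^*(T^\bu)$ at any $s_j$ with $j\in S$ annihilates them; a finite filtration with acyclic graded pieces has acyclic total complex, so $k^*T^\bu=0$.

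The contramodule side is more delicate, and this is where the argument works hardest. For a \emph{single} generator, the vanishing $\boR\Hom_R(R[s_j^{-1}],P^\bu)=0$ is precisely the statement that $P^\bu$ has $s_j$\+contramodule cohomology, established in the proof of Lemma~\ref{telescope-complex}(a). For a general nonempty $S$ one picks $j\in S$, regards $R[s_S^{-1}]$ as a module over $R'=R[s_j^{-1}]$, and invokes the derived restriction--coinduction adjunction along $R\rarrow R'$:
$$
 \Hom_{\sD(R)}(R[s_S^{-1}],P^\bu[n])\simeq
 \Hom_{\sD(R')}\bigl(R[s_S^{-1}],\boR\Hom_R(R',P^\bu)[n]\bigr)=0,
$$
because $\boR\Hom_R(R',P^\bu)=\boR\Hom_R(R[s_j^{-1}],P^\bu)=0$ by the single-generator case. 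Thus every column $\boR\Hom_R(R[s_S^{-1}],P^\bu)$ is acyclic, and the finite \v Cech filtration gives $\boR k^!P^\bu=\boR\Hom_R(C_\s^\bu(R),P^\bu)=0$.

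With the two triangles and the two orthogonalities in hand, the remaining assertions are formal. To identify $\mathcal T^\perp=\mathcal U$: one inclusion is $\Hom(\mathcal T,\mathcal U)=0$; conversely, if $N\in\mathcal T^\perp$ then in the first triangle the map $C_\s^\bu(R)\sptilde\ot_R N\rarrow N$ lies in $\Hom(\mathcal T,N)=0$, so the triangle splits as $k_*k^*N\simeq N\oplus(C_\s^\bu(R)\sptilde\ot_R N)[1]$, and the resulting split monomorphism $(C_\s^\bu(R)\sptilde\ot_R N)[1]\hookrightarrow k_*k^*N$ lies in $\Hom(\mathcal T,\mathcal U)=0$, forcing the torsion summand to vanish and hence $N\simeq k_*k^*N\in\mathcal U$. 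The symmetric bookkeeping with the two triangles yields likewise ${}^\perp\mathcal U=\mathcal T$, \ $\mathcal U^\perp=\mathcal P$, and ${}^\perp\mathcal P=\mathcal U$, which is the first sentence of the theorem. Finally, the first triangle exhibits $\mathcal U$ as the kernel of a colocalization with essential image ${}^\perp\mathcal U=\mathcal T$, and the second exhibits it as the kernel of a localization with essential image $\mathcal U^\perp=\mathcal P$; in either case the relevant orthogonality makes the Verdier quotient functor $\sD^\st(R\modl)\rarrow\sD^\st(R\modl)/\mathcal U$ fully faithful and essentially surjective on $\mathcal T$, respectively on $\mathcal P$. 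Composing the two resulting equivalences gives the asserted chain $\sD^\st_{I\tors}(R\modl)\simeq\sD^\st(R\modl)/\mathcal U\simeq\sD^\st_{I\ctra}(R\modl)$.
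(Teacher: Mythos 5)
Your proposal is correct, and its skeleton coincides with the paper's: both realize $k_*\sD^\st(C_\s^\bu(R)\modl)$ as the common middle term of two semiorthogonal decompositions coming from the triangle $C_\s^\bu(R)\sptilde\rarrow R\rarrow C_\s^\bu(R)$ (the paper phrases your two triangles as the functorial cocone of $\Id\rarrow k_*k^*$ and cone of $k_*\boR k^!\rarrow\Id$), both rest on Proposition~\ref{two-adjoints-faithful-functor}, and both conclude by standard localization theory. Where you genuinely diverge is the central step, the identification of $\ker(k^*)$ and $\ker(\boR k^!)$ with $\sD^\st_{I\tors}(R\modl)$ and $\sD^\st_{I\ctra}(R\modl)$. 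The paper reduces to one-term complexes by Hartshorne's way-out functor argument \cite[Propositions~I.7.1 and~I.7.3]{Har} --- legitimate even for $\st=\varnothing$ since $C_\s^\bu(R)\sptilde$ and $\Tel^\bu(R,\s)$ are finite complexes --- and then quotes Lemmas~\ref{cech-complex}(c) and~\ref{telescope-complex}(c); you instead prove the vanishings $k^*T^\bu=0$ and $\boR k^!P^\bu=0$ directly, via the finite \v Cech-degree filtration, killing the columns by exactness of localization on the torsion side, and on the contramodule side by the two-term free resolution $T^\bu(R,s_j)'$ of $R[s_j^{-1}]$ (the complex-level contramodule criterion appearing in the proof of Lemma~\ref{telescope-complex}(a), valid for unbounded complexes because $\Hom_R$ from a free module is exact) combined with the derived restriction--coinduction adjunction along $R\rarrow R[s_j^{-1}]$. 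Your route buys a self-contained argument that is manifestly insensitive to unboundedness and to weak proregularity; the paper's buys brevity by delegating to the module-level lemmas, and it obtains the chain $\sD^\st_{I\tors}(R\modl)\simeq\sD^\st(R\modl)/\im k_*\simeq\sD^\st_{I\ctra}(R\modl)$ immediately from Proposition~\ref{two-adjoints-faithful-functor} rather than by your splitting-triangle bookkeeping (which is nonetheless correct, since the essential images in question are closed under direct summands). Two small touch-ups: Lemmas~\ref{cech-complex}(a) and~\ref{telescope-complex}(a) are stated for modules, so your claim that the inner terms of the two triangles lie in $\sD^\st_{I\tors}(R\modl)$, resp.\ $\sD^\st_{I\ctra}(R\modl)$, for an arbitrary complex $M^\bu$ needs the same finite-filtration remark you use later (e.g., contractibility of $C_\s^\bu(R)\sptilde[s_j^{-1}]$ shows the cohomology of $C_\s^\bu(R)\sptilde\ot_RM^\bu$ is $s_j$\+torsion for every~$j$, hence $I$\+torsion); and in identifying ${}^\perp$ of the middle subcategory with $\sD^\st_{I\tors}(R\modl)$ one should note explicitly that the latter is closed under direct summands, which is obvious on cohomology.
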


 The assertion of the theorem can be equivalently restated as
the existence of two semiorthogonal decompositions of the category
$\sD^\st(R\modl)$, one of them formed by the two full subcategories
$k_*\sD^\st(C_\s^\bu(R)\modl)$ and $\sD^\st_{I\tors}(R\modl)\subset
\sD^\st(R\modl)$, and the other one consisting of the two full
subcategories $\sD^\st_{I\ctra}(R\modl)$ and
$k_*\sD^\st(C_\s^\bu(R)\modl)$ in $\sD^\st(R\modl)$
(see, e.~g., \cite[Section~1.3]{Pkoszul}).

\begin{proof}
 It is clear from Proposition~\ref{two-adjoints-faithful-functor} that
the passage to the quotient category by the image of the functor~$k_*$
establishes an equivalence between the kernels of the functors~$k^*$
and~$\boR k^!$,
$$
 \ker(k^*)\simeq \sD^\st(R\modl)/\im k_*\simeq \ker (\boR k^!).
$$
 In other words, we have two semiorthogonal decompositions, one of them
formed by the two full subcategories $\im(k_*)$ and $\ker(k^*)$, and
the other one by the two full subcategories $\ker(\boR k^!)$ and
$\im(k_*)$, in the derived category $\sD^\st(R\modl)$.
 To prove the theorem, we only have to identify the full subcategory
$\ker(k^*)$ with the full subcategory $\sD^\st_{I\tors}(R\modl)\subset
\sD^\st(R\modl)$ and the full subcategory $\ker(\boR k^!)$ with
the full subcategory $\sD^\st_{I\ctra}(R\modl)\subset\sD^\st(R\modl)$.

 As it always happens with semiorthogonal decompositions,
the adjunction morphisms $\Id\rarrow k_*k^*$ and $k_*\boR k^!\rarrow\Id$
have functorial cones.
 The subcategory $\ker(k^*)\subset\sD^\st(R\modl)$ coincides with
the image of the functor
$$
 \cocone(\Id\to k_*k^*)\:\sD^\st(R\modl)\rarrow\sD^\st(R\modl),
$$
while the subcategory $\ker(\boR k^!)\subset\sD^\st(R\modl)$ coincides
with the image of the functor
$$
 \cone(k_*\boR k^!\to\Id)\:\sD^\st(R\modl)\rarrow\sD^\st(R\modl).
$$
 Just as the functors $k_*k^*$ and $k_*\boR k^!$, both the functors
$\cocone(\Id\to k_*k^*)$ and $\cone(k_*\boR k^!\to\Id)$ are projectors
on their respective images: each of them is naturally isomorphic
to its composition with itself.
 Moreover, there are natural transformations
$$
 \cocone(\Id\to k_*k^*)\rarrow\Id \quad\text{and}\quad
 \Id\rarrow\cone(k_*\boR k^!\to\Id).
$$
 A complex $M^\bu\in\sD^\st(R\modl)$ belongs to the subcategory
$\ker(k^*)$ if and only if the morphism
$\cocone(\Id\to k_*k^*)(M^\bu) \rarrow M^\bu$ is a quasi-isomorphism,
while a complex $P^\bu\in\sD^\st(R\modl)$ belongs to the subcategory
$\ker(\boR k^!)$ if and only if the morphism
$P^\bu\rarrow\cone(k_*\boR k^!\to\Id)(P^\bu)$ is a quasi-isomorphism.

 In view of the constructions of the functors~$k^*$ and $\boR k^!$
in the proof of part~(a) of the proposition, the functor
$\cocone(\Id\to k_*k^*)$ is isomorphic to the functor of tensor
product with the complex $C_\s^\bu(R)\sptilde$,
$$
 C_\s^\bu(R)\sptilde\ot_R{-}\:\sD^\st(R\modl)\rarrow\sD^\st(R\modl),
$$
while the functor $\cone(k_*\boR k^!\to\Id)$ is the functor of
right derived homomorphisms from $C_\s^\bu(R)\sptilde$.
 The latter can be easily computed as the homomorphisms from
the complex of free $R$\+modules $\Tel^\bu(R,\s)$,
$$
 \Hom_R(\Tel^\bu(R,\s),{-})\:\sD^\st(R\modl)\rarrow\sD^\st(R\modl).
$$
 To sum up, a complex $M^\bu\in\sD^\st(R\modl)$ belongs to
the subcategory $\ker(k^*)$ if and only if the natural 
map $C_\s^\bu(R)\sptilde\ot_R M^\bu\rarrow M^\bu$ is a quasi-isomorphism,
while a complex $P^\bu\in\sD^\st(R\modl)$ belongs to the subcategory
$\ker(\boR k^!)$ if and only if the natural map
$P^\bu\rarrow\Hom_R(\Tel^\bu(R,\s),P^\bu)$ is a quasi-isomorphism.

 Finally, we recall that $C_\s^\bu(R)\sptilde$ and $\Tel^\bu(R,\s)$
are finite complexes of $R$\+modules, so Hartshorne's way-out
functor argument of~\cite[Propositions~I.7.1 and~I.7.3]{Har} applies.
 Hence the cohomology modules of the complex
$C_\s^\bu(R)\sptilde\ot_RM^\bu$ are $I$\+torsion $R$\+modules for 
any complex of $R$\+modules $M^\bu$ by Lemma~\ref{cech-complex}(a).
 Similarly, the cohomology modules of the complex
$\Hom_R(\Tel^\bu(R,\s),M^\bu)$ are $I$\+contramodule $R$\+modules
for any complex of $R$\+modules $M^\bu$ by
Lemma~\ref{telescope-complex}(a).
 We have shown that the cohomology modules of any complex
$M^\bu\in\ker(k^*)$ are $I$\+torsion $R$\+modules, and
the cohomology modules of any complex $P^\bu\in\ker(\boR k^!)$
are $I$\+contramodule $R$\+modules.

 Conversely, it follows from the above that a complex of $R$\+modules
belongs to the subcategory $\ker(k^*)$ or $\ker(\boR k^!)$ whenever
its cohomology modules, viewed as one-term complexes, do.
 When $M$ is an $I$\+torsion $R$\+module, the map
$C_\s^\bu(R)\sptilde\ot_RM\rarrow M$ is a quasi-isomorphism by
Lemma~\ref{cech-complex}(c).
 When $P$ is an $I$\+contramodule, the map
$P\rarrow\Hom_R(T^\bu(R,\s),P)$ is a quasi-isomorphism by
Lemma~\ref{telescope-complex}(c).
 Therefore, the full subcategory $\ker(k^*)\subset\sD^\st(R\modl)$
consists precisely of all the complexes with $I$\+torsion cohomology
modules, and the full subcategory $\ker(\boR k^!)\subset
\sD^\st(R\modl)$ consists precisely of all the complexes with
$I$\+contramodule cohomology modules.

 Notice that \emph{no} weak proregularity assumption has been used in
this proof (cf.~\cite[Propositions~6.12 and~6.15]{DG}).
\end{proof}

 When the ideal $I\subset R$ is weakly proregular, the MGM duality
theorem takes the form promised in
the formula~\eqref{derived-categories-equivalent} in
Section~\ref{description-of-results} of the introduction.

\begin{cor} \label{weakly-proregular-duality}
 Let $R$ be a commutative ring and $I\subset R$ be a weakly proregular
finitely generated ideal.
 Then for any symbol\/ $\st=\b$, $+$, $-$, or\/~$\varnothing$,
the (respectively bounded or unbounded) derived categories\/
$\sD^\st(R\modl_{I\tors})$ and\/ $\sD^\st(R\modl_{I\ctra})$ are naturally
equivalent,
$$
 \sD^\st(R\modl_{I\tors})\simeq\sD^\st(R\modl_{I\ctra}).
$$
\end{cor}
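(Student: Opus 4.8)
The plan is to deduce this corollary by splicing together the three main results already established, so that it becomes a formal composition of equivalences. By Corollary~\ref{complexes-with-torsion-cohomology}, the embedding $R\modl_{I\tors}\rarrow R\modl$ induces, for each conventional symbol $\st=\b$, $+$, $-$, or $\varnothing$, an equivalence of $\sD^\st(R\modl_{I\tors})$ onto the full subcategory $\sD^\st_{I\tors}(R\modl)\subset\sD^\st(R\modl)$ of complexes with $I$\+torsion cohomology modules; dually, Corollary~\ref{complexes-with-contramodule-cohomology} identifies $\sD^\st(R\modl_{I\ctra})$ with the full subcategory $\sD^\st_{I\ctra}(R\modl)$ of complexes with $I$\+contramodule cohomology modules. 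It therefore suffices to exhibit an equivalence $\sD^\st_{I\tors}(R\modl)\simeq\sD^\st_{I\ctra}(R\modl)$, and this is precisely the content of Theorem~\ref{complexes-with-t-c-cohomology-equivalence}: both subcategories are realized as, respectively, the left and right orthogonal complements of $k_*\sD^\st(C_\s^\bu(R)\modl)$ in $\sD^\st(R\modl)$, and the Verdier quotient functor restricts to an equivalence of each of them onto the quotient $\sD^\st(R\modl)/k_*\sD^\st(C_\s^\bu(R)\modl)$. Composing the three equivalences yields the assertion.

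It is worth recording the functors that realize the equivalence, so that ``natural'' can be read as ``given by explicit derived functors'' rather than as an abstract zig\+zag through a quotient. Tracing through the projectors used in the proof of Theorem~\ref{complexes-with-t-c-cohomology-equivalence}, the quasi-inverse of the restricted quotient functor is computed on the torsion side by $M^\bu\mapsto C_\s^\bu(R)\sptilde\ot_R M^\bu$ and on the contramodule side by $P^\bu\mapsto\Hom_R(\Tel^\bu(R,\s),P^\bu)$. Hence the composite equivalence $\sD^\st(R\modl_{I\tors})\rarrow\sD^\st(R\modl_{I\ctra})$ is computed by $\Hom_R(\Tel^\bu(R,\s),{-})$, i.e.\ by the derived functor $\boL\Delta_I$ of Section~\ref{derived-contramodules}, while its quasi-inverse is computed by $C_\s^\bu(R)\sptilde\ot_R{-}$, i.e.\ by $\boR\Gamma_I$. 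I would then check, using Lemmas~\ref{telescope-complex} and~\ref{cech-complex}, that on the relevant subcategories these indeed agree with the functors $\boL\Delta_I$ and $\boR\Gamma_I$ constructed in the previous two sections, so that the composite genuinely lands in, and is inverse between, the two abelian derived categories.

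Since the bulk of the work resides in the results being invoked, there is no genuinely new difficulty here; the only points demanding care are bookkeeping ones, and I expect the compatibility of the various identifications to be the most delicate among them. First, the weak proregularity hypothesis enters only through Corollaries~\ref{complexes-with-torsion-cohomology} and~\ref{complexes-with-contramodule-cohomology}, equivalently through the finite homological dimension of $\boR\Gamma_I$ and $\boL\Delta_I$ furnished by Lemmas~\ref{gamma-finite-homol-dim}(b) and~\ref{delta-finite-homol-dim}(b), whereas Theorem~\ref{complexes-with-t-c-cohomology-equivalence} is valid for an arbitrary finitely generated ideal; I would make explicit where each hypothesis is actually used. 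Second, one must verify that the identifications of subcategories are compatible with the functors $k^*$ and $\boR k^!$, namely that $\sD^\st_{I\tors}(R\modl)=\ker(k^*)$ and $\sD^\st_{I\ctra}(R\modl)=\ker(\boR k^!)$ as full subcategories of $\sD^\st(R\modl)$; this matching of the torsion/contramodule descriptions with the kernel descriptions is exactly what is established inside the proof of Theorem~\ref{complexes-with-t-c-cohomology-equivalence}. Granting these, the corollary follows immediately by composition.
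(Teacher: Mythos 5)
Your proposal is correct and follows exactly the paper's own proof, which likewise deduces the corollary by comparing Corollaries~\ref{complexes-with-torsion-cohomology} and~\ref{complexes-with-contramodule-cohomology} with Theorem~\ref{complexes-with-t-c-cohomology-equivalence}. Your additional tracing of the explicit functors (identifying the composite with $\boL\Delta_I$ via $\Hom_R(\Tel^\bu(R,\s),{-})$ and its inverse with $\boR\Gamma_I$ via $C_\s^\bu(R)\sptilde\ot_R{-}$) is accurate bookkeeping beyond what the paper records, but not a different route.
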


\begin{proof}
 Compare the results of
Corollaries~\ref{complexes-with-torsion-cohomology}
and~\ref{complexes-with-contramodule-cohomology},
and Theorem~\ref{complexes-with-t-c-cohomology-equivalence}.
 Another proof of this equivalence of derived categories
(in somewhat greater generality) will be obtained in
Theorem~\ref{dedualizing-torsion-contra-duality} below.
\end{proof}

\Section{Dedualizing Complexes for Ideals with Artinian Quotient Rings}
\label{dedualizing-artinian-quotient}

 Let $R$ be a Noetherian commutative ring and $I\subset R$ be
an ideal such that the quotient ring $R/I$ is Artinian.
(One would lose essentially no generality by assuming that $I$
is a maximal ideal in~$R$.)
 Fix an injective envelope of the $R$\+module $R/I$ and denote it
by~$C$.
 Then $C$ is an injective $I$\+torsion $R$\+module.

 For any $R$\+module $M$ and an integer $n\ge1$, we denote by
${}_nM$ the submodule of all elements annihilated by $I^n$ in~$M$.
 In particular, ${}_nC$ is a finitely generated $R/I^n$\+module
and an injective cogenerator of the abelian category $R/I^n\modl$.
 The following lemma is standard~\cite{Mat0}.

\begin{lem} \label{artinian-torsion-modules}
 Let $M$ be an $I$\+torsion $R$\+module.
 Then the following conditions are equivalent:
\begin{enumerate}
\item the $R/I$\+module ${}_1M$ is finitely generated;
\item for every $n\ge1$, the $R/I^n$\+module\/ ${}_nM$ is
finitely generated;
\item $M$ is a submodule of a finite direct sum of copies
of~$C$;
\item $M$ is an Artinian $R$\+module. \qed
\end{enumerate}
\end{lem}

 Given a derived category symbol $\st=\b$, $+$, $-$, $\varnothing$,
$\abs+$, $\abs-$, or~$\abs$, we denote by $\sD^\st(R\modl_{I\tors})$ and
$\sD^\st(R\modl_{I\ctra})$ the corresponding (conventional or absolute)
derived categories of the abelian categories $R\modl_{I\tors}$ and
$R\modl_{I\ctra}$ of $I$\+torsion and $I$\+contramodule $R$\+modules
(see Appendix~\ref{exotic-derived} for the definitions).

 Given an $R$\+module $M$ and a set $X$, we denote by $M[X]$
the $X$\+indexed direct sum of copies of~$M$.
 The $R$\+module $C[X]$ is called the \emph{cofree $I$\+torsion
$R$\+module cogenerated by the set~$X$} (cf.~\cite[Sections~1.4
and~1.9]{Pweak}).
 The abelian category $R\modl_{I\tors}$ has enough injective objects,
which are precisely the direct summands of cofree $I$\+torsion
$R$\+modules.
 Every injective object of $R\modl_{I\tors}$ is also injective in
$R\modl$.

 Identifying the bounded below derived category $\sD^+(R\modl_{I\tors})$
with the homotopy category of injective $I$\+torsion $R$\+modules
$\Hot^+(R\modl_{I\tors})$ and applying the functor
$M\longmapsto{}_nM=\Hom_R(R/I^n,M)$ to complexes of injective
$I$\+torsion $R$\+modules termwise, we obtain the right derived functor
$$
 M^\bu\longmapsto{}_n^\boR M^\bu\:\,
 \sD^+(R\modl_{I\tors})\lrarrow\sD^+(R/I^n\modl).
$$

 The full subcategory of $I$\+torsion $R$\+modules satisfying
the equivalent conditions of Lemma~\ref{artinian-torsion-modules}
is closed under the passages to subobjects, quotient objects,
and extensions in $R\modl_{I\tors}$.
 So the category of Artinian $I$\+torsion $R$\+modules is abelian.
 Notice that this abelian category has enough injective objects,
which are precisely the direct summands of finitely cogenerated
cofree $I$\+torsion $R$\+modules (i.~e., of the finite direct sums
of copies of~$C$).

\begin{lem}  \label{finitely-cogenerated-complex}
 Fix $n\ge1$.
 Then a complex $L^\bu\in\sD^+(R\modl_{I\tors})$ has Artinian
$R$\+modules of cohomology if and only if the complex\/
${}_n^\boR L^\bu\in\sD^+(R/I^n\modl)$ has finitely generated
$R/I^n$\+modules of cohomology.
\end{lem}

\begin{proof}
 First one applies Lemma~\ref{artinian-torsion-modules} in order
to prove that the complex ${}_n^\boR L$ has finitely generated
cohomology $R/I^n$\+modules for every Artinian $I$\+torsion
$R$\+module $L$ (viewed as a one-term complex).
 In the general case, both the ``if'' and ``only if'' assertions
are then deduced by induction in the cohomological degree.
\end{proof}

\begin{lem}  \label{finitely-cogenerated-cofree-complex}
 Let $L^\bu$ be a bounded below complex of $I$\+torsion $R$\+modules
with Artinian cohomology modules.
 Then there exists a bounded below complex of finitely cogenerated
cofree $I$\+torsion $R$\+modules $J^\bu$ together with
a quasi-isomorphism of complexes of $I$\+torsion $R$\+modules
$L^\bu\rarrow J^\bu$.
\end{lem}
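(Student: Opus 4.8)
The plan is to realize $\L^\bu$, up to quasi-isomorphism, as an iterated mapping cone of finitely cogenerated cofree resolutions of its cohomology comodules, and then to promote the resulting derived isomorphism to an honest morphism of complexes. The whole point is that the construction should only ever resolve the (finitely copresented) cohomology objects, never the terms $\L^n$ themselves, which need not be finitely copresented.

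First I would record that every finitely copresented left $\C$\+comodule $H$ admits a bounded below (possibly infinite) resolution $0\rarrow H\rarrow I^0\rarrow I^1\rarrow\dotsb$ by \emph{genuine} finitely cogenerated cofree comodules. Indeed, being finitely copresented, $H$ is in particular finitely cogenerated, so it embeds into some $\C\ot_k V$ with $\dim_k V<\infty$; since the embedding $\C\comodl_{\mathsf{fcp}}\rarrow\C\comodl$ of the abelian category of finitely copresented comodules is exact, the cokernel of $H\rarrow\C\ot_kV$ is again finitely copresented, and one iterates (using that finitely copresented comodules form an abelian category with enough injectives, the injectives being direct summands of finitely cogenerated cofrees). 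Applying this to each cohomology module, I obtain for every $i$ a bounded below complex $I^{i,\bu}$ of finitely cogenerated cofrees representing $H^i(\L^\bu)$ placed in cohomological degree~$i$.

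Next I would assemble these along the canonical truncation tower of $\L^\bu$. Let $a$ be the lowest degree with $H^a(\L^\bu)\ne0$. Using the triangles $\tau_{\le n-1}\L^\bu\rarrow\tau_{\le n}\L^\bu\rarrow H^n(\L^\bu)[-n]\rarrow\tau_{\le n-1}\L^\bu[1]$, I construct inductively bounded below complexes $\J^\bu_{\le n}$ of finitely cogenerated cofrees with $\J^\bu_{\le n}\simeq\tau_{\le n}\L^\bu$ in $\sD^+(\C\comodl)$. At the inductive step the connecting morphism $H^n(\L^\bu)[-n-1]\rarrow\J^\bu_{\le n-1}$ of $\sD^+(\C\comodl)$ is represented by an actual chain map $I^{n,\bu}[-n-1]\rarrow\J^\bu_{\le n-1}$, because $\J^\bu_{\le n-1}$ is a bounded below complex of injective comodules and hence computes morphisms out of anything in $\sD^+$ as homotopy classes; I set $\J^\bu_{\le n}=\cone$ of this chain map. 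Termwise $\J^d_{\le n}\simeq I^{n,\,d-n}\oplus\J^d_{\le n-1}$, a finite direct sum of finitely cogenerated cofrees, hence again finitely cogenerated cofree. Passing to the termwise-stabilizing colimit over $n$ produces a bounded below complex $\J^\bu$ of finitely cogenerated cofrees with $\J^\bu\simeq\L^\bu$ in $\sD^+(\C\comodl)$.

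The one genuinely quantitative point to watch, which I expect to be the crux of the write-up, is the finiteness bookkeeping: in cohomological degree $d$ the complex $\J^\bu$ receives precisely the contributions $I^{i,\,d-i}$ for $a\le i\le d$, a \emph{finite} direct sum, so each $\J^d$ is a genuine finitely cogenerated cofree comodule and $\J^d=0$ for $d<a$. This is exactly where boundedness below of $\L^\bu$ (together with $I^{i,j}=0$ for $j<0$) is indispensable; without it the terms would be infinite direct sums and fail to be finitely cogenerated. Finally I would convert the derived isomorphism into the desired quasi-isomorphism: an isomorphism $\L^\bu\rarrow\J^\bu$ in $\sD^+(\C\comodl)$ whose target is a bounded below complex of injective $\C$\+comodules is, once more by homotopy-injectivity, represented by an honest morphism of complexes, which is then the required quasi-isomorphism $\L^\bu\rarrow\J^\bu$.
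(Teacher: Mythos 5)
Your proof is correct, but it takes a genuinely different route from the paper's. The paper disposes of this lemma in one line, as a ``standard step-by-step construction'' in the style of \cite[Lemma~1.2]{Pfp} and of the proof of Lemma~\ref{object-resolutions}(c): there one builds the terms of $\J^\bu$ \emph{and} the chain map $\L^\bu\rarrow\J^\bu$ simultaneously, by induction on the cohomological degree, at each stage embedding a suitable finitely copresented comodule (assembled from the cocycles of $\L^\bu$ and the previously constructed data, via fibered coproducts) into a finitely cogenerated cofree comodule, with left cocoherence keeping all the auxiliary comodules finitely copresented. You instead resolve each cohomology comodule $H^i(\L^\bu)$ separately --- which uses the same input, namely that over a left cocoherent coalgebra the finitely copresented comodules form an abelian category in which the finitely cogenerated cofree comodules supply enough injectives (your justification of the finite copresentability of the cokernel of $H\rarrow\C\ot_kV$ is really Lemma~\ref{finitely-presented}(a) combined with cocoherence, but this is the same thing) --- and then glue the resolutions along the canonical truncation triangles by mapping cones, rigidifying at the end via the fact that bounded below complexes of injective comodules compute morphisms in $\sD^+(\C\comodl)$ as homotopy classes. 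Your d\'evissage has the merit of making the finiteness bookkeeping completely visible, $\J^d\simeq\bigoplus_{a\le i\le d}I^{i,\.d-i}$, and of isolating where cocoherence enters from the purely formal assembly; its cost is the triangulated machinery and one point you gloss over: to identify the union $\bigcup_n\J^\bu_{\le n}$ with $\L^\bu$ in the derived category you need the quasi-isomorphisms $\tau_{\le n}\L^\bu\rarrow\J^\bu_{\le n}$ to be compatible with the tower, not merely to exist degree by degree. This is routine but should be said: the cone construction gives squares commuting up to homotopy, and since the transition maps $\J^\bu_{\le n-1}\rarrow\J^\bu_{\le n}$ are degreewise split monomorphisms while the inclusions $\tau_{\le n-1}\L^\bu\rarrow\tau_{\le n}\L^\bu$ are monomorphisms into complexes whose targets have injective terms, the homotopies extend and the chain maps can be corrected to be strictly compatible, after which the colimit map $\L^\bu=\bigcup_n\tau_{\le n}\L^\bu\rarrow\J^\bu$ is the desired quasi-isomorphism (making your final h-injectivity step unnecessary). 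The paper's construction avoids all of this by never leaving the category of complexes, which is why it can be cited as ``standard'' in one sentence.
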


\begin{proof}
 This is a standard step-by-step construction
(cf.~\cite[Lemma~1.2]{Pfp} and the proof
of Lemma~\ref{object-resolutions}(c) below).
\end{proof}

 Let $\fR=\varprojlim_n R/I^n$ denote the $I$\+adic completion
of the ring~$R$.
 So $\fR$ is a finite direct sum of complete Noetherian local rings.
 For any set $X$, we put $\fR[[X]]=\varprojlim_n R/I^n[X]$.
 For every $n\ge1$, the natural map $R/I^n\rarrow
\Hom_{R/I^n}({}_nC,\.{}_nC)$ is an isomorphism.
 Hence one has $\fR\simeq\Hom_R(C,C)$, and moreover, $\fR[[X]]
\simeq\Hom_R(C,C[X])$ for any set~$X$.

 Clearly, $\fR[[X]]$ is an $I$\+contramodule $R$\+module; we call it
the \emph{free $I$\+contramodule $R$\+module generated by the set~$X$}.
 For any $I$\+contramodule $R$\+module $P$, one has
$\Hom_R(\fR[[X]],P)=P^X$.
 There are enough projective objects in $R\modl_{I\ctra}$, and
an $I$\+contramodule $R$\+module is a projective object in
$R\modl_{I\ctra}$ if and only if it is a direct summand of
a free $I$\+contramodule $R$\+module (see
Lemma~\ref{free-contramodules}(a) below and the discussion in
the two paragraphs preceding it).

\begin{lem} \label{noetherian-contramodules}
 Let $P$ be an $I$\+contramodule $R$\+module.
 Then the following conditions are equivalent:
\begin{enumerate}
\item the $R/I$\+module $P/IP$ is finitely generated;
\item for every $n\ge1$, the $R/I^n$\+module $P/I^nP$ is
finitely generated;
\item $P$ is a quotient module of a finite direct sum of copies
of\/~$\fR$;
\item $P$ is a Noetherian object of the category of
$I$\+contramodule $R$\+modules.
\end{enumerate}
\end{lem}

\begin{proof}
 This lemma looks pretty much like the dual result to
Lemma~\ref{artinian-torsion-modules}, but in fact holds in
the greater generality of an arbitrary ideal $I$ in a Noetherian
commutative ring~$R$ (cf.~\cite[Theorem~0.2]{PSY2}).
 The implications (3)~$\Longrightarrow$(2) $\Longrightarrow$~(1)
are obvious, and (4)~$\Longrightarrow$~(3) is easily deduced from
the existence of a surjective $R$\+module morphism $\fR[[X]]\rarrow P$.
 To prove (1)~$\Longrightarrow$~(3), one chooses a finite set of
generators $X$ in the $R/I$\+module $P/IP$, lifts the composition
of morphisms $\fR[[X]]\rarrow R/I[X]\rarrow P/IP$ to
an $R$\+module morphism $f\:\fR[X]=\fR[[X]]\rarrow P$, notices
that the cokernel $L$ of the morphism~$f$ satisfies $L/IL=0$,
and applies the contramodule Nakayama lemma~\cite[Lemma~1.3.1]{Pweak}
together the isomorphism of contramodule categories
from~\cite[Theorem~B.1.1]{Pweak}.
 It also follows from the latter theorem that any $I$\+contramodule
$R$\+module has a natural $\fR$\+module structure, and all morphisms
between $I$\+contramodule $R$\+modules are $\fR$\+linear.
 So the implication (3)~$\Longrightarrow$~(4) holds because $\fR$
is a Noetherian ring.
\end{proof}

 Let us call an $I$\+contramodule $R$\+module \emph{finitely generated}
if it satisfies the equivalent conditions of
Lemma~\ref{noetherian-contramodules}.
 The full subcategory of finitely generated $I$\+contramodule
$R$\+modules is closed under the passages to subobjects, quotient
objects, and extensions in $R\modl_{I\ctra}$ (and under the passages
to the kernels, cokernels, and extensions in $R\modl$).
 The abelian category of finitely generated $I$\+contramodule
$R$\+modules has enough projective objects, which are precisely
the direct summands of finitely generated free $I$\+contramodule
$R$\+modules (i.~e., of the finite direct sums of copies of~$\fR$).

\begin{lem}
 The restriction of the forgetful functor\/ $\fR\modl\rarrow R\modl$
provides an isomorphism between the abelian categories of finitely
generated\/ $\fR$\+modules and finitely generated $I$\+contramodule
$R$\+modules.
\end{lem}

\begin{proof}
 This lemma also holds for any ideal $I$ in a Noetherian ring~$R$.
 First of all, by~\cite[Theorem~B.1.1]{Pweak}, the forgetful functor
is an isomorphism between the categories of $\fR I$\+contramodule
$\fR$\+modules and $I$\+contramodule $R$\+modules.
 Alternatively, notice that for any sets $X$ and $Y$ one has
$\Hom_\fR(\fR[[X]],\fR[[Y]])\simeq\fR[[Y]]^X\simeq
\Hom_R(\fR[[X]],\fR[[Y]])$.
 The full subcategories of projective objects in
$\fR\modl_{\fR I\ctra}$ and $R\modl_{I\ctra}$ being equivalent and
identified by the forgetful functor, it follows that the whole
abelian categories are identified, too.
 Secondly, any finitely generated $\fR$\+module is an $I$\+contramodule,
since $\fR$ is an $I$\+contramodule and the class of $I$\+contramodules
is closed under cokernels and finite direct sums.
\end{proof}

 Now we return to the setting of a Noetherian ring $R$ with an ideal
$I\subset R$ such that the quotient ring $R/I$ is Artinian.
 For any $R$\+module $M$, we denote by $M\spcheck$ the $R$\+module
$\Hom_R(M,C)$.

\begin{prop} \label{finitely-co-presented-hom}
\textup{(a)} The restrictions of the functors $M\longmapsto M\spcheck$
and $P\longmapsto P\spcheck$ are mutually inverse anti-equivalences
between the abelian categories of Artinian $I$\+torsion $R$\+modules
and finitely generated $I$\+contramodule $R$\+modules. \par
\textup{(b)} For any $I$\+torsion $R$\+module $M$ and any Artinian
$I$\+torsion $R$\+module $L$, the functor $M\longmapsto M\spcheck$
induces an isomorphism of the Hom modules
$$
 \Hom_R(M,L)\simeq\Hom_R(L\spcheck,\.M\spcheck).
$$
\end{prop}

\begin{proof}
 This is essentially the result of~\cite[Theorem~4.2]{Mat0}.
 One first notices that the functors $L\longmapsto L\spcheck$ and
$P\longmapsto P\spcheck$ are mutually inverse anti-equivalences
between the categories of injective Artinian $I$\+torsion
$R$\+modules (i.~e., direct summands of finite direct sums of
copies of $C$) and projective finitely generated $I$\+contramodule
$R$\+modules (i.~e., direct summands of finite direct sums of
copies of~$\fR$).
 Representing arbitrary Artinian $I$\+torsion $R$\+modules as
the kernels of morphisms between injectives and arbitrary
finitely generated $I$\+contramodule $R$\+modules as the cokernels
of morphisms between projectives, one can then deduce both
the parts~(a) and~(b).
\end{proof}

 A finite complex of $R$\+modules $N^\bu$ is said to have
\emph{$R$\+flat dimension~$\le d$} if one has
$H^{-n}(N^\bu\ot^\boL_R M)=0$ for all $R$\+modules $M$ and all
the integers $n>d$.
 A finite complex of $R$\+modules $N^\bu$ is said to have
\emph{$R$\+projective dimension~$\le d$} if one has
$H^n\boR\Hom_R(N^\bu,M)=0$ for all $R$\+modules $M$ and all $n>d$.

 A finite complex of $I$\+torsion $R$\+modules $L^\bu$ is said
to have \emph{$(R,I)$\+contraflat dimension~$\le d$} if one has
$H^{-n}(N^\bu\ot^\boL_R P)=0$ for all $I$\+contramodule $R$\+modules $P$
and all the integers $n>d$.
 A finite complex of $I$\+torsion $R$\+modules $L^\bu$ is said to
have \emph{$(R,I)$\+projective dimension\/~$\le d$} if one has
$\Hom_{\sD^\b(R\modl_{I\tors})}(L^\bu,M[n])=0$ for all $I$\+torsion
$R$\+modules $M$ and all $n>d$.

 We denote the $R$\+flat dimension of a finite complex of $R$\+modules
$N^\bu$ by $\fd_RN^\bu$, the $R$\+projective dimension of $N^\bu$ by
$\pd_RN^\bu$, the $(R,I)$\+contraflat dimension of a finite complex of
$I$\+torsion $R$\+modules $L^\bu$ by $\cfd_{(R,I)}L^\bu$, and
the $(R,I)$\+projective dimension of $L^\bu$ by $\pd_{(R,I)}L^\bu$.

\begin{prop} \label{contraflat-projective-dimension}
 For any finite complex of $I$\+torsion $R$\+modules $L^\bu$, one
has\/ $\pd_RL^\bu\ge \pd_{(R,I)}L^\bu\ge\cfd_{(R,I)}L^\bu
=\fd_RL^\bu=\fd_\fR L^\bu$.
\end{prop}

\begin{proof}
 By the definition, one has $\cfd_{(R,I)}L^\bu\le\fd_RL^\bu$.
 Since $\fR$ is a flat $R$\+module and all flat $\fR$\+modules
are also flat $R$\+modules, for any finite complex of
$\fR$\+modules $N^\bu$ one has $\fd_R N^\bu\le\fd_\fR N^\bu$.
 Furthermore, a finite complex of $\fR$\+modules $N^\bu$ has
$\fR$\+flat dimension~$\le d$ whenever $H^{-n}(N^\bu\ot^\boL_\fR M)=0$
for all finitely generated $\fR$\+modules $M$ and all $n>d$.
 For a finite complex of $I$\+torsion $R$\+modules $L^\bu$ and
an $\fR$\+module $M$ one has $L^\bu\ot_\fR M = L^\bu\ot_R M$,
hence $L^\bu\ot_\fR^\boL M = L^\bu\ot_R^\boL M$.
 Since finitely generated $\fR$\+modules are $I$\+contramodules,
it follows that $\fd_\fR L^\bu\le\cfd_{(R,I)}L^\bu$.
 We have shown that the three (contra)flat dimensions are equal
to each other.

 It remains to check the inequalities involving the projective
dimensions.
 Applying Theorem~\ref{torsion-fully-faithful} in the (easy)
case $\st=\b$, one can see that $\Hom_{\sD^\b(R\modl_{I\tors})}(L^\bu,M[n])=
\Hom_{\sD^\b(R\modl)}(L^\bu,M[n])$ for any finite complex of
$I$\+torsion $R$\+modules $L^\bu$ and any $I$\+torsion $R$\+module~$M$.
 Hence $\pd_RL^\bu\ge\pd_{(R,I)}L^\bu$.
 Finally, for any $R$\+module $M$ there is an isomorphism
$\Hom_R(L^\bu\ot^\boL_R M\;C)\simeq\boR\Hom_R(L^\bu\;\Hom_R(M,C))$,
and the $R$\+module $\Hom_R(M,C)$ is $I$\+torsion whenever
an $R$\+module $M$ is finitely generated.
 Besides, the $R$\+modules $H^{-n}(L^\bu\ot^\boL_RM)$ are $I$\+torsion,
so $H^n\Hom_R(L^\bu\ot^\boL_RM\;C)=0$ implies
$H^{-n}(L^\bu\ot^\boL_RM)=0$.
 This proves that $\pd_{(R,I)}L^\bu\ge\fd_R L^\bu$.
\end{proof}

 Now we come to the main definition of this section.
 As above, we assume that $R$ is a Noetherian commutative ring
and $I\subset R$ is an ideal such that the quotient ring $R/I$
is Artinian.
 In this setting, a finite complex of $I$\+torsion $R$\+modules $B^\bu$
is called a \emph{dedualizing complex} for the ideal $I$ in the ring
$R$ if
\begin{enumerate}
\renewcommand{\theenumi}{\roman{enumi}}
\item the complex $B^\bu$ has finite projective dimension as
a complex of $I$\+torsion $R$\+modules, that is
$\pd_{(R,I)}B^\bu<\infty$;
\item the homothety map $\fR\rarrow\Hom_{\sD^\b(R\modl_{I\tors})}
(B^\bu,B^\bu[*])$ provided by the action of $\fR$ in $B^\bu$
is an isomorphism of graded rings;
\item the cohomology modules of the complex $B^\bu$ are Artinian
$I$\+torsion $R$\+modules.
\end{enumerate}

 The dedualizing complex $B^\bu$ is viewed as an object of the derived
category $\sD^\b(R\modl_{I\tors})$.
 We refer to the book~\cite[Chapter~5]{Har}, and additionally to
the paper~\cite{Pfp} and the references therein, for a discussion of
the classical notions of a dualizing complex over a commutative ring
or a pair of noncommutative ones, after which the above definition
is largely modelled.

 The following example provides a natural choice of a dedualizing
complex for any ideal $I$ in a Noetherian commutative ring $R$ with
an Artinian quotient ring $R/I$.

\begin{ex} \label{dedualizing-artinian-quotient-example}
 Consider the derived functor of maximal $I$\+torsion submodule
$\boR\Gamma_I\:\sD^+(R\modl)\allowbreak\rarrow\sD^+(R\modl_{I\tors})$.
 According to Lemma~\ref{gamma-finite-homol-dim}(b), this functor
takes $\sD^\b(R\modl)$ into $\sD^\b(R\modl_{I\tors})$.
 Set $B^\bu=\boR\Gamma_I(R)$.
 We claim that the complex $B^\bu$ is a dedualizing complex for
the ideal $I\subset R$.

 According to Proposition~\ref{contraflat-projective-dimension}, in
order to check the projective dimension condition~(i) it suffices to
show that $\pd_R B^\bu<\infty$.
 The latter is clear since, by Lemma~\ref{gamma-finite-homol-dim}(a),
the complex $B^\bu$ is isomorphic to the telescope complex
$\Tel^\bu(R,\s)$ in the derived category $\sD^\b(R\modl)$ for any
finite generating sequence~$\s$ of the ideal $I\subset R$.
 In view of Lemma~\ref{agree-on-flats}, the property~(ii) is implicit
in Theorem~\ref{complexes-with-t-c-cohomology-equivalence} in the case
$\st=\b$ (see Example~\ref{dedualizing-torsion-example} below for
further details).

 Let us now prove the condition~(iii).
 According to Lemma~\ref{finitely-cogenerated-complex}, it suffices
to show that the complex ${}_n^\boR B^\bu$ has finitely generated
$R/I^n$\+modules of cohomology.
 The functor $\Gamma_I$ takes injective objects in $R\modl$ to
injective objects in $R\modl_{I\tors}$, so the composition of
derived functors ${}_n^\boR\boR\Gamma_I$ is the derived functor of
the composition of left exact functors
$M\longmapsto {}_n\Gamma_I(M)$.

 The latter functor assigns to an $R$\+module $M$ its maximal
$R/I^n$\+submodule~${}_nM$.
 It remains to notice that the functor $\Ext_R^*(R/I^n,M)$ takes
finitely generated $R$\+modules to cohomologically graded
$R$\+modules with finitely generated components, because it can
be computed in terms of a left resolution of the $R$\+module
$R/I^n$ by finitely generated projective $R$\+modules.
\end{ex}

 The following theorem is the main result of this section.

\begin{thm} \label{dedualizing-artinian-quotient-duality}
 Given a dedualizing complex\/ $B^\bu$ for an ideal $I$ in a Noetherian
commutative ring $R$ with an Artinian quotient ring $R/I$,
for any symbol\/ $\st=\b$, $+$, $-$, $\varnothing$, $\abs+$, $\abs-$,
or\/~$\abs$ there is an equivalence of derived categories\/
$\sD^\st(R\modl_{I\tors})\simeq\sD^\st(R\modl_{I\ctra})$ provided
by mutually inverse functors\/ $\boR\Hom_R(B^\bu,{-})$ and\/
$B^\bu\ot_R^\boL{-}$.
\end{thm}

\begin{proof}
 Assume for simplicity of notation that the complex $B^\bu$ is
concentrated in nonpositive cohomological degrees.
 Let $d$~be an integer greater or equal to the projective dimension
$\pd_{(R,I)}B^\bu$.
 By Proposition~\ref{contraflat-projective-dimension},
we then also have $d\ge\cfd_{(R,I)}B^\bu$.

 To construct the image of a complex of $I$\+torsion $R$\+modules
$M^\bu$ under the functor $\boR\Hom_R(B^\bu,{-})$, one has to choose
an exact sequence of complexes of $I$\+torsion $R$\+modules
$0\rarrow M^{\bu}\rarrow J^{0,\bu}\rarrow J^{1,\bu}\rarrow\dotsb$
with injective $I$\+torsion $R$\+modules~$J^{j,i}$.
 Then one applies the functor $\Hom_R(B^\bu,{-})$ to every complex
$0\rarrow J^{0,i}\rarrow J^{1,i}\rarrow J^{2,i}\rarrow\dotsb$,
obtaining a nonnegatively graded complex of $I$\+contramodule
$R$\+modules $0\rarrow P^{0,i}\rarrow P^{1,i}\rarrow P^{2,i}\rarrow
\dotsb$.
 According to the projective dimension condition on the dedualizing
complex~$B^\bu$, the complex $P^{\bu,i}$ has zero cohomology modules
at the cohomological degrees above~$d$; so it is quasi-isomorphic to
its canonical truncation complex $\tau_{\le d} P^{\bu,i}$.
 By the definition, one sets the object $\boR\Hom_R(B^\bu,M^\bu)$
in the derived category $\sD^\st(R\modl_{I\ctra})$ to be represented
by the total complex of the bicomplex $\tau_{\le d}P^{\bu,\bu}$
concentrated in the cohomological degrees $0\le j\le d$ and $i\in\Z$.

 Similarly, to construct the image of a complex of $I$\+contramodule
$R$\+modules $P^\bu$ under the functor $B^\bu\ot_R^\boL{-}$,
one has to choose an exact sequence of complexes of $I$\+contramodule
$R$\+modules $\dotsb\rarrow F^{-1,\bu}\rarrow F^{0,\bu}\rarrow
P^\bu\rarrow0$ with projective $I$\+contramodule
$R$\+modules~$F^{j,i}$.
 Then one applies the functor $B^\bu\ot_R{-}$ to every complex
$\dotsb\rarrow F^{-2,i}\rarrow F^{-1,i}\rarrow F^{0,i}\rarrow0$,
obtaining a nonpositively graded complex of $I$\+torsion $R$\+modules
$\dotsb\rarrow M^{-2,i}\rarrow M^{-1,i}\rarrow M^{0,i}\rarrow0$.
 By~\cite[Theorem~3.4]{Yek1} or~\cite[Proposition~B.9.1]{Pweak},
the projective $I$\+contramodule $R$\+modules are flat $R$\+modules.
 According to the contraflat dimension condition on the complex
$B^\bu$, the complex $M^{\bu,i}$ has zero cohomology modules at
the cohomological degrees below~$-d$; so it is quasi-isomorphic to
its canonical truncation complex $\tau_{\ge -d} M^{\bu,i}$.
 One sets the object $B^\bu\ot_R^\boL P^\bu$ in the derived category
$\sD^\st(R\modl_{I\tors})$ to be represented by the total complex of
the bicomplex $\tau_{\ge-d}(M^{\bu,\bu})$ concentrated in
the cohomological degrees $-d\le j\le0$ and $i\in\Z$.

 These constructions of two derived functors are but particular cases
of the construction of a derived functor of finite homological dimension
spelled out in Appendix~\ref{derived-finite-homol-dim}.
 According to the results of that appendix, the above constructions
produce well-defined triangulated functors $\boR\Hom_R(B^\bu,{-})\:
\sD^\st(R\modl_{I\tors})\rarrow\sD^\st(R\modl_{I\ctra})$ and
$B^\bu\ot_R^\boL{-}\:\sD^\st(R\modl_{I\ctra})\rarrow
\sD^\st(R\modl_{I\tors})$ for any derived category symbol
$\st=\b$, $+$, $-$, $\varnothing$, $\abs+$, $\abs-$, or~$\abs$.
 Moreover, the former functor is right adjoint to the latter one.
 All these assertions only depend on the first condition~(i) in
the definition of a dedualizing complex.

 It remains to prove that the adjunction morphisms are isomorphisms.
 Since the total complexes of finite acyclic complexes of complexes
are absolutely acyclic, in order to check that the morphism
$P^\bu\rarrow\boR\Hom_R(B^\bu\;B^\bu\ot_R^\boL P^\bu)$ is
an isomorphism in the derived category $\sD^\st(R\modl_{I\ctra})$ for
all the $\st$\+bounded complexes of $I$\+contramodule $R$\+modules
$P^\bu$ it suffices to consider the case of a one-term complex
$P^\bu=P$ corresponding to a single $I$\+contramodule $R$\+module~$P$.
 Furthermore, since a morphism in $\sD^\b(R\modl_{I\ctra})$ is
an isomorphism whenever it is an isomorphism in
$\sD^-(R\modl_{I\ctra})$, one can view the one-term complex $P$
as an object of the bounded above derived category
$\sD^-(R\modl_{I\ctra})$ and replace it with a free $I$\+contramodule
$R$\+module resolution $F^\bu$ of the contramodule~$P$.
 Applying the same totalization argument to the complex~$F^\bu$,
the question reduces to proving that the adjunction morphism
$F\rarrow\boR\Hom_R(B^\bu\;B^\bu\ot_R^\boL F)$ is
an isomorphism in $\sD^\b(R\modl_{I\ctra})$ for any free
$I$\+contramodule $R$\+module~$F$.

 So let $X$ be a set and $F=\fR[[X]]$ be the $I$\+contramodule
$R$\+module generated by~$X$; then one has $B^\bu\ot_R^\boL F
= B^\bu\ot_RF= B^\bu[X]$.
 According to the condition~(iii) together with
Lemma~\ref{finitely-cogenerated-cofree-complex}, there exists
a bounded below complex of finitely cogenerated cofree $I$\+torsion
$R$\+modules $E^\bu$ together with a quasi-isomorphism of complexes
of $I$\+torsion $R$\+modules $B^\bu\rarrow E^\bu$.
 We have to check that the natural map
$$
 \fR[[X]]\lrarrow\Hom_R(B^\bu,E^\bu[X])
$$
is a quasi-isomorphism of complexes of $I$\+contramodule $R$\+modules.

 Consider the morphism of complexes of $I$\+torsion $R$\+modules
\begin{equation} \label{artinian-quotient-main-comparison-map}
 B^\bu\ot_R\Hom_R(E^\bu,C)\lrarrow C
\end{equation}
induced by the morphism $B^\bu\rarrow E^\bu$.
 Applying the functor $M\longmapsto M\spcheck=\Hom_R(M,C)$
to the morphism~\eqref{artinian-quotient-main-comparison-map}
and taking into account Proposition~\ref{finitely-co-presented-hom}(b),
we get the map
$$
 \fR=\Hom_R(C,C)\lrarrow\Hom_R(E^\bu{}\spcheck,\.B^\bu{}\spcheck)
 = \Hom_R(B^\bu,E^\bu),
$$
which is a quasi-isomorphism of complexes of ($I$\+contramodule)
$R$\+modules by the condition~(ii).
 It follows that
the morphism~\eqref{artinian-quotient-main-comparison-map}
is a quasi-isomorphism of complexes of $I$\+torsion $R$\+modules.
 
 Now, applying the functor $M\longmapsto\Hom_R(M,C[X])$ to
the quasi-isomorphism~\eqref{artinian-quotient-main-comparison-map}
and noticing that $\Hom_R(E^\bu,C)=E^\bu{}\spcheck$ is a complex of
finitely generated (free) $\fR$\+modules and
$\Hom_R(E^\bu{}\spcheck,C)=E^\bu$ by
Proposition~\ref{finitely-co-presented-hom}(a),
we obtain the desired quasi-isomorphism of complexes of
$I$\+contramodule $R$\+modules
\begin{multline*}
 \fR[[X]]=\Hom_R(C,C[X])\lrarrow
 \Hom_R(B^\bu,\Hom_R(E^\bu{}\spcheck,\.C[X]))
 \\ =\Hom_R(B^\bu,\.\Hom_R(E^\bu{}\spcheck,C)[X])
 = \Hom_R(B^\bu,E^\bu[X]).
\end{multline*}

 Similarly, in order to prove that the adjunction morphism
$B^\bu\ot_R^\boL\boR\Hom_R(B^\bu,M^\bu)\allowbreak\rarrow M^\bu$
is an isomorphism in the derived category $\sD^\st(R\modl_{I\tors})$
for any $\st$\+bounded complex of $I$\+torsion $R$\+modules $M^\bu$,
it suffices to check that this morphism is an isomorphism in
$\sD^\b(R\modl_{I\tors})$ for any cofree $I$\+torsion $R$\+module
viewed as a one-term complex in $\sD^\b(R\modl_{I\tors})$.
 Let $J=C[X]$ be a cofree $I$\+torsion $R$\+module; then one has
$\boR\Hom_R(B^\bu,J)=\Hom_R(B^\bu,J)=\Hom_R(B^\bu,C[X])$.

 As above, let $E^\bu$ be a bounded below complex of finitely
cogenerated cofree $I$\+torsion $R$\+modules endowed with
a quasi-isomorphism $B^\bu\rarrow E^\bu$.
 Then $\Hom_R(E^\bu,C)$ is a bounded above complex of (finitely
generated) free $I$\+contramodule $R$\+modules quasi-isomorphic
to $\Hom_k(B^\bu,C)$.
 We have to show that the map
$$
 B^\bu\ot_R\Hom_R(E^\bu,C[X])\lrarrow C[X]
$$
is a quasi-isomorphism (of complexes of $I$\+torsion $R$\+modules).

 Now, the natural map into the left-hand side from the $X$\+indexed
direct sum of copies of the complex $B^\bu\ot_R\Hom_R(E^\bu,C)$ is
an isomorphism, because $\Hom_R(C,C[X])=\fR[[X]]$ and
$B^\bu\ot_R\fR[[X]]=B^\bu[X]$.
 It remains to recall
the quasi-isomorphism~\eqref{artinian-quotient-main-comparison-map}.
\hbadness=1200
\end{proof}

\begin{rem}
 Returning to the discussion of the dualizing and dedualizing complexes
in the introduction---the definition of a dualizing complex for
a Noetherian commutative ring $R$ with an ideal $I\subset R$ was given
in~\cite[Section~C.1]{Pcosh} (a somewhat more general definition of
a dualizing complex for a Noetherian formal scheme can be found in
the much earlier~\cite[Section~5]{Yek0}).
 In particular, when the quotient ring $R/I$ is Artinian, the injective
$R$\+module $C$, viewed as a one-term complex, is a dualizing complex
for $(R,I)$.

 Note that this is not the same thing as a dualizing complex for
the ring $R$, even in the case of a maximal ideal in a complete
local ring.
 Rather, for any Noetherian commutative ring $R$ with an ideal
$I\subset R$, if $D_R^\bu$ is a dualizing complex for $R$, then
$D^\bu=\boR\Gamma_I(D_R^\bu)$ is a dualizing complex for $(R,I)$.
 E.~g., consider the case of the ring of formal power series
$R=k[[z]]$ in one variable~$z$ over a field~$k$.
 Then the ring $R$, being regular, can be considered as a dualizing
complex over itself.
 Replacing it with a quasi-isomorphic complex of injective
$R$\+modules, we obtain a two-term complex $k((z))\rarrow
k((z))/k[[z]]$, where $k((z))$ is the field of Laurent power series.
 To obtain a dualizing complex for the ideal $I=zk[[z]]\subset R$,
one drops the uniquely divisible component $k((z))$, arriving to
the one-term complex of $R$\+modules $k((z))/k[[z]]\simeq C$
\cite[Example~5.1]{Yek0}.

 For any Noetherian commutative ring $R$ with an ideal $I$ such that
the quotient ring $R/I$ is Artinian, the functors $\Hom_R(C,{-})$
and $C\ot_R{-}$, taking $C[X]$ to $\fR[[X]]$ and back, establish
a covariant equivalence between the additive categories of injective
$I$\+torsion $R$\+modules and projective $I$\+contramodule $R$\+modules.
 The homotopy category of unbounded complexes of injective objects
in $R\modl_{I\tors}$ is equivalent to the coderived category
$\sD^\co(R\modl_{I\tors})$, while the homotopy category of unbounded
complexes of projective objects in $R\modl_{I\ctra}$ is equivalent
to the contraderived category $\sD^\ctr(R\modl_{I\ctra})$.
 Therefore~\cite[Theorem~C.1.4]{Pcosh}, the derived functors of
Hom and tensor product with the dualizing complex $D^\bu=C$ for
$(R,I)$ provide an equivalence between the coderived and
the contraderived categories
$$
 \sD^\co(R\modl_{I\tors})\simeq\sD^\ctr(R\modl_{I\ctra}).
$$

 On the other hand, according to our
Theorem~\ref{dedualizing-artinian-quotient-duality},
the derived functors of Hom and tensor product with the dedualizing
complex $B^\bu$ (e.~g., $B^\bu=\boR\Gamma_I(R)$, as in
Example~\ref{dedualizing-artinian-quotient-example}) provide
equivalences of the conventional and absolute derived categories
$$
 \sD(R\modl_{I\tors})\simeq\sD(R\modl_{I\ctra})\quad\text{and}\quad
 \sD^\abs(R\modl_{I\tors})\simeq\sD^\abs(R\modl_{I\ctra}).
$$
 For an ideal $I$ in a regular Noetherian ring $R$ (with
an Artinian quotient ring $R/I$), one can choose $B^\bu=D^\bu$.
 In this case, the abelian categories $R\modl_{I\tors}$ and
$R\modl_{I\ctra}$ have finite homological dimension, so there is
no difference between the conventional and exotic derived category
constructions for them.
\end{rem}

\Section{Dedualizing Complexes for Weakly Proregular Ideals}
\label{dedualizing-weakly-proregular}

 The aim of this section is to prove
Theorem~\ref{dedualizing-torsion-contra-duality}.
 Taken together with Example~\ref{dedualizing-torsion-example},
it extends the MGM duality result established in
Corollary~\ref{weakly-proregular-duality}
to the absolute derived categories $\sD^{\abs+}$, \,$\sD^{\abs-}$,
and $\sD^\abs$ of torsion modules and contramodules over
a commutative ring $R$ with a weakly proregular ideal~$I$.

 Let $I$ be a finitely generated ideal in a commutative ring $R$,
and let $s_1$,~\dots, $s_m$ be a sequence of its generators,
denoted by~$\s$ for brevity.
 For any element $s\in R$, the complex $T^\bu(R,s)$ from
Section~\ref{derived-contramodules} is the union of its subcomplexes
of free $R$\+mod\-ules $T^\bu_n(R,s)$ generated by the symbols
$\epsilon_0$,~\dots, $\epsilon_{n-1}$ in degree~$0$ and
$\delta_1$,~\dots,~$\delta_n$ in degree~$1$.
 Accordingly, the complex $T^\bu(R,\s)$ is the union of its subcomplexes
of finitely generated free $R$\+modules $T^\bu_n(R,s_1)\ot_R\ot\dotsb
\ot_RT^\bu_n(R,s_m)$.
 The complex $T^\bu_n(R,\s)$ is homotopy equivalent to the complex
$\Tel^\bu_n(R,\s)$.

 Let us denote the dual complexes by $K^n_\bu(R,\s)=
\Hom_R(T_n^\bu(R,\s),R)$.
 In particular, $K^1_\bu(R,\s)$ is the conventional Koszul complex
associated with the sequence of elements $s_1$,~\dots, $s_m\in R$,
while the complex $K^n_\bu(R,\s)$ is quasi-isomorphic to
the Koszul complex $K^1_\bu(R,\s^n)$ associated with the sequence
of elements $s_1^n$,~\dots, $s_m^n\in R$.

 The Thomason--Trobaugh--Neeman theory of compactly generated
triangulated categories is fairly widely known by now (see, e.~g.,
\cite{Neem0} or~\cite{Rou}).
 The following one is an old result.

\begin{prop} \label{derived-torsion-compact}
 For any given $n\ge1$, the complex $K^n_\bu(R,\s)$ is a compact
generator of the triangulated subcategory\/ $\sD_{I\tors}(R\modl)
\subset\sD(R\modl)$ of complexes with $I$\+torsion cohomology
modules in the derived category of $R$\+modules.
 A complex of $R$\+modules with $I$\+torsion cohomology modules
is a compact object of\/ $\sD_{I\tors}(R\modl)$ if and only if it
is a compact object of\/ $\sD(R\modl)$.
\end{prop}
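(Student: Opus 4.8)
The plan is to verify the two standard ingredients, compactness and generation, for the complex $K^n_\bu(R,\s)$, and then to read off the description of compact objects from the Thomason--Neeman theory of compactly generated triangulated categories.

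First I would settle compactness. The complex $K^n_\bu(R,\s)=\Hom_R(T^\bu_n(R,\s),R)$ is a bounded complex of finitely generated free $R$\+modules, hence a perfect complex, and perfect complexes are precisely the compact objects of $\sD(R\modl)$. Since $K^n_\bu(R,\s)$ is quasi-isomorphic to the Koszul complex of $s_1^n,\dotsc,s_m^n$, its cohomology modules are annihilated by the ideal $(s_1^n,\dotsc,s_m^n)$, which contains a power of $I$; so they are $I$\+torsion and $K^n_\bu(R,\s)\in\sD_{I\tors}(R\modl)$. The subcategory $\sD_{I\tors}(R\modl)$ is a localizing subcategory of $\sD(R\modl)$, being closed under shifts, cones, summands, and coproducts because the class of $I$\+torsion modules is closed under the corresponding operations on cohomology; in particular coproducts in the two categories agree, so any object of $\sD_{I\tors}(R\modl)$ that is compact in $\sD(R\modl)$ is compact in $\sD_{I\tors}(R\modl)$. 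This establishes compactness of $K^n_\bu(R,\s)$ in $\sD_{I\tors}(R\modl)$ and one implication of the last assertion.

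Next I would prove generation: that $Y\simeq0$ for any $Y\in\sD_{I\tors}(R\modl)$ with $\Hom_{\sD(R\modl)}(K^n_\bu(R,\s),Y[j])=0$ for all $j\in\Z$. Since $K^n_\bu(R,\s)=\Hom_R(T^\bu_n(R,\s),R)$ with $T^\bu_n(R,\s)$ perfect, the double-duality identity gives $\Hom_{\sD(R\modl)}(K^n_\bu(R,\s),Y[j])\simeq H^j(T^\bu_n(R,\s)\ot_R Y)\simeq H^j(\Tel^\bu_n(R,\s)\ot_R Y)$, using $T^\bu_n(R,\s)\simeq\Tel^\bu_n(R,\s)$. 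The objects $Z$ of $\sD(R\modl)$ with $\Hom_{\sD(R\modl)}(Z,Y[j])=0$ for all $j$ form a thick subcategory, which contains $K^n_\bu(R,\s)$ and hence the whole thick subcategory it generates. Now for every $n'\ge1$ the complex $K^{nn'}_\bu(R,\s)$, which is the Koszul complex of $s_1^{nn'},\dotsc,s_m^{nn'}$ up to quasi-isomorphism, lies in the thick subcategory generated by $K^n_\bu(R,\s)$: for a single element the octahedral axiom applied to the factorization $s_j^{nn'}=s_j^n\cdot s_j^{n(n'-1)}$ builds the cone of multiplication by $s_j^{nn'}$ from those of $s_j^n$ by induction on $n'$, and one then passes to the tensor product over $j$. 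Consequently $H^j(\Tel^\bu_{nn'}(R,\s)\ot_R Y)=0$ for all $n'$ and all $j$. Finally, by Theorem~\ref{complexes-with-t-c-cohomology-equivalence} the natural map $C_\s^\bu(R)\sptilde\ot_R Y\rarrow Y$ is a quasi-isomorphism, while $C_\s^\bu(R)\sptilde\simeq\Tel^\bu(R,\s)=\varinjlim_{n''}\Tel^\bu_{n''}(R,\s)$; as cohomology commutes with filtered colimits and the subsystem $n''=nn'$ is cofinal, $H^j(Y)=\varinjlim_{n'}H^j(\Tel^\bu_{nn'}(R,\s)\ot_R Y)=0$ for every $j$, whence $Y\simeq0$.

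With $K^n_\bu(R,\s)$ now a compact generator, $\sD_{I\tors}(R\modl)$ is a compactly generated triangulated category, and Neeman's theorem on compactly generated categories identifies its subcategory of compact objects with the thick subcategory generated by $K^n_\bu(R,\s)$. Since this generator is a perfect complex and the perfect complexes form a thick subcategory of $\sD(R\modl)$ equal to its subcategory of compact objects, every compact object of $\sD_{I\tors}(R\modl)$ is perfect and hence compact in $\sD(R\modl)$; combined with the implication from the second paragraph this yields the asserted coincidence of the two notions of compactness. I expect the generation step to be the main obstacle --- specifically, upgrading the joint generation by the family $\{K^{nn'}_\bu(R,\s)\}_{n'\ge1}$, which the telescope colimit renders transparent, to generation by the single complex $K^n_\bu(R,\s)$; this is precisely where the stability of the thick subcategory under passing to powers of the generating sequence is required.
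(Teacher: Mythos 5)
Your proposal is correct and follows essentially the same route as the paper: compactness of $K^n_\bu(R,\s)$ because it is a perfect complex, generation via the identification $\Hom_{\sD(R\modl)}(K^{n'}_\bu(R,\s),Y[j])\simeq H^j(\Tel^\bu_{n'}(R,\s)\ot_RY)$ and the colimit quasi-isomorphism $\varinjlim_{n'}\Tel^\bu_{n'}(R,\s)\ot_RY\simeq Y$ for complexes $Y$ with $I$\+torsion cohomology, and the description of compact objects as the thick subcategory generated by the compact generator. The only differences are ones of detail: where the paper reduces to $n=1$ and merely asserts that the complexes $K^{n'}_\bu(R,\s)$ are obtained from $K^1_\bu(R,\s)$ by shifts and cones, you prove the needed statement for general~$n$ (building $K^{nn'}_\bu(R,\s)$ from $K^n_\bu(R,\s)$ via the octahedral axiom applied to the factorizations $s_j^{nn'}=s_j^n\cdot s_j^{n(n'-1)}$, plus cofinality of the multiples of~$n$), and you invoke Theorem~\ref{complexes-with-t-c-cohomology-equivalence} in place of Lemma~\ref{cech-complex}(c) for the final quasi-isomorphism, both of which are legitimate.
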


\begin{proof}
 The first assertion goes back to~\cite[Proposition~6.1]{BN};
see also~\cite[Proposition~6.6]{Rou}.
 It suffices to consider the case $n=1$.
 One can easily see that the complexes $K^n_\bu(R,\s)$, viewed as
objects of the homotopy category $\Hot(R\modl)$, can be obtained from
the complex $K^1_\bu(R,\s)$ using the operations of shift and cone.
 Let $M^\bu$ be a complex of $R$\+modules with $I$\+torsion cohomology
modules.
 Then the inductive limit of the complexes $\Hom_R(K^n_\bu(R,\s),M^\bu)
\simeq T_n^\bu(R,\s)\ot_RM^\bu$ is quasi-isomorphic to the complex
$M^\bu$ by Lemma~\ref{cech-complex}(c).
 Hence the complex $M^\bu$ is acyclic whenever all the complexes
$\Hom_R(K^n_\bu(R,\s),M^\bu)$ are.
 The complexes $K^n_\bu(R,\s)$ are compact objects in
$\sD_{I\tors}(R\modl)$, since they are compact in $\sD(R\modl)$.
 This proves the first assertion.
 It follows that the compact objects of $\sD_{I\tors}(R\modl)$ are
precisely those objects that can be obtained from $K^1_\bu(R,\s)$
using the operations of shift, cone, and the passage to a direct
summand, implying the second one (cf.~\cite[Lemma~5.3]{PSY1}).
\end{proof}

\begin{lem} \label{implicit-in-duality-theorem}
\textup{(a)} For any $I$\+torsion $R$\+module $M$, the natural
morphism of complexes of $R$\+modules
$$
 T^\bu(R,\s)\ot_R\Hom_R(T^\bu(R,\s),M)\lrarrow M
$$
is a quasi-isomorphism. \par
\textup{(b)} For any $I$\+contramodule $R$\+module $P$, the natural
morphism of complexes of $R$\+modules
$$
 P\lrarrow\Hom_R(T^\bu(R,\s)\;T^\bu(R,\s)\ot_RP)
$$
is a quasi-isomorphism.
\end{lem}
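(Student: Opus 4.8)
The plan is to read both morphisms as the counit and the unit of the tensor--Hom adjunction $(T^\bu(R,\s)\ot_R{-})\dashv\Hom_R(T^\bu(R,\s),{-})$ on the homotopy category of complexes of $R$\+modules; these are precisely the two round\+trip adjunction maps realizing the equivalence of Theorem~\ref{complexes-with-t-c-cohomology-equivalence}, which explains the name of the lemma. To prove each is a quasi\+isomorphism, I would compare it, through a dinaturality identity, with the augmentation $f\:T^\bu(R,\s)\rarrow R$ obtained by composing the homotopy equivalence $T^\bu(R,\s)\simeq\Tel^\bu(R,\s)$, the quasi\+isomorphism $\Tel^\bu(R,\s)\rarrow C_\s^\bu(R)\sptilde$, and the natural map $C_\s^\bu(R)\sptilde\rarrow R$; note that the cone of $f$ is $C_\s^\bu(R)$. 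Throughout I use that $T^\bu(R,\s)$ is a finite complex of free $R$\+modules, so that $T^\bu(R,\s)\ot_R{-}$ is exact and $\Hom_R(T^\bu(R,\s),{-})$ preserves triangles.

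For part~(a) the map is the evaluation $\mathrm{ev}_M\:T^\bu(R,\s)\ot_R\Hom_R(T^\bu(R,\s),M)\rarrow M$. Writing $f^*\:M=\Hom_R(R,M)\rarrow\Hom_R(T^\bu(R,\s),M)$ for the map induced by $f$, a check on elements gives $\mathrm{ev}_M\circ(\id_{T^\bu}\ot f^*)=f\ot\id_M$ as morphisms $T^\bu(R,\s)\ot_RM\rarrow M$. First I would show $\id_{T^\bu}\ot f^*$ is a quasi\+isomorphism for \emph{every} $M$: its cone is $T^\bu(R,\s)\ot_R\Hom_R(C_\s^\bu(R),M)[1]$, and each term $\Hom_R(C_\s^i(R),M)$ is a module over a localization $R[s_j^{-1}]$, so $T^\bu(R,\s)\ot_R\Hom_R(C_\s^i(R),M)\simeq T^\bu(R,\s)[s_j^{-1}]\ot_{R[s_j^{-1}]}\Hom_R(C_\s^i(R),M)$ is contractible because $T^\bu(R,\s)[s_j^{-1}]$ is contractible (as in the proof of Lemma~\ref{cech-complex}(a)); since $C_\s^\bu(R)$ is finite, the total complex is acyclic. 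On the other hand, $f\ot\id_M$ is, up to the quasi\+isomorphisms of flat complexes relating $T^\bu(R,\s)$, $\Tel^\bu(R,\s)$, and $C_\s^\bu(R)\sptilde$, the natural morphism $C_\s^\bu(M)\sptilde\rarrow M$, which is an isomorphism for $I$\+torsion $M$ by Lemma~\ref{cech-complex}(c). The displayed identity then forces $\mathrm{ev}_M$ to be a quasi\+isomorphism.

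Part~(b) is dual. The map is the coevaluation $\eta_P\:P\rarrow\Hom_R(T^\bu(R,\s)\;T^\bu(R,\s)\ot_RP)$, and one has $\Hom_R(T^\bu(R,\s),f\ot\id_P)\circ\eta_P=f^*$, where $f^*\:P\rarrow\Hom_R(T^\bu(R,\s),P)$ is again induced by $f$. Here $\Hom_R(T^\bu(R,\s),f\ot\id_P)$ is a quasi\+isomorphism for every $P$: its cone is $\Hom_R(T^\bu(R,\s),C_\s^\bu(R)\ot_RP)=\Hom_R(T^\bu(R,\s),C_\s^\bu(P))$, whose terms $\Hom_R(T^\bu(R,\s),C_\s^i(P))\simeq\Hom_{R[s_j^{-1}]}(T^\bu(R,\s)[s_j^{-1}],C_\s^i(P))$ are contractible for the same reason. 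Meanwhile $f^*$ is a quasi\+isomorphism for any $I$\+contramodule $P$ by Lemma~\ref{telescope-complex}(c) (which, via $T^\bu(R,\s)\simeq\Tel^\bu(R,\s)$, is exactly the statement that $P\rarrow\Hom_R(T^\bu(R,\s),P)$ is a quasi\+isomorphism). The identity again forces $\eta_P$ to be a quasi\+isomorphism.

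The crux of the argument, and the only place where any real computation enters, is this local contractibility step: the two ``correction'' maps $\id_{T^\bu}\ot f^*$ and $\Hom_R(T^\bu(R,\s),f\ot\id)$ are quasi\+isomorphisms for arbitrary modules because $T^\bu(R,\s)$ becomes contractible after inverting any single generator $s_j$. Everything else is either formal (the two dinaturality identities, verified on elements) or a citation of Lemmas~\ref{cech-complex}(c) and~\ref{telescope-complex}(c). In particular, as in Theorem~\ref{complexes-with-t-c-cohomology-equivalence}, no weak proregularity hypothesis is used.
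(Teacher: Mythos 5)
Your reduction scheme coincides with the paper's: both parts are reduced, via the identities $\mathrm{ev}_M\circ(\id\ot f^*)=f\ot\id_M$ and $\Hom_R(T^\bu(R,\s),f\ot\id_P)\circ\eta_P=f^*$, to Lemmas~\ref{cech-complex}(c) and~\ref{telescope-complex}(c) together with two ``correction'' statements valid for \emph{arbitrary} $M$ --- and these are exactly the generalized assertions (the PSY lemmas) with which the paper's own proof opens; your closing remark that no weak proregularity is used also matches the paper. Your part~(b) is correct. The cone of $\Hom_R(T^\bu(R,\s),f\ot\id_P)$ is strictly $\Hom_R(T^\bu(R,\s),\cone(f)\ot_RP)$, and the replacement of $\cone(f)\ot_RP$ by $C_\s^\bu(P)$ that you make silently is legitimate: $\cone(f)\rarrow C_\s^\bu(R)$ is a quasi-isomorphism of bounded complexes of flat modules, so it survives $\ot_RP$, and $\Hom_R(T^\bu(R,\s),-)$ preserves quasi-isomorphisms because $T^\bu(R,\s)$ is a finite complex of projectives. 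Your termwise contraction of $\Hom_R(T^\bu(R,\s),C_\s^i(P))$ is a mild variant of the paper's argument, which instead kills the cohomology modules of that complex as modules that are simultaneously $s_j$\+contramodules and $R[s_j^{-1}]$\+modules.

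In part~(a), however, the step ``its cone is $T^\bu(R,\s)\ot_R\Hom_R(C_\s^\bu(R),M)[1]$'' is a genuine gap. The strict cone of $f$ is a finite complex of \emph{free} modules that is only quasi-isomorphic to $C_\s^\bu(R)$, so the cone of $f^*\:M\rarrow\Hom_R(T^\bu(R,\s),M)$ is, up to shift, $\Hom_R(\cone(f),M)$, which computes the \emph{derived} Hom from $C_\s^\bu(R)$ into~$M$; since $C_\s^\bu(R)$ is not a complex of projectives and $M$ is arbitrary, this is not quasi-isomorphic to the plain $\Hom_R(C_\s^\bu(R),M)$ --- already for $R=\Z$, $\s=\{p\}$, $M=\Z$ one has $\Hom_\Z(\Z[p^{-1}],\Z)=0$ while $\Ext^1_\Z(\Z[p^{-1}],\Z)\ne0$. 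The honest cone has terms that are products of copies of $M$, not modules over any $R[s_j^{-1}]$, so your localization trick does not apply to it; the asymmetry with part~(b) is that a quasi-isomorphism of flat complexes survives tensoring and the covariant Hom variable, but not the contravariant one. The paper circumvents this by truncating to the finite subcomplexes $T_n^\bu(R,\s)$: it proves that $T_n^\bu(R,\s)\ot_RM\rarrow\Hom_R(T^\bu(R,\s)\;T_n^\bu(R,\s)\ot_RM)$ is a quasi-isomorphism for every~$n$ and passes to the direct limit; the cocone then computes the derived-category modules $\Hom_{\sD^\b(R\modl)}(C_\s^i(R)\;T_n^\bu(R,\s)\ot_RM[*])$ --- where replacing $\cone(f)$ by $C_\s^\bu(R)$ \emph{is} legitimate --- and these vanish because $s_j$~acts invertibly on $C_\s^i(R)$ while $s_j^n$~acts null-homotopically on $T_n^\bu(R,\s)\ot_RM$. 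The truncation to $T_n^\bu$ is precisely what furnishes a fixed power of~$s_j$ acting by zero on the target; no such power acts null-homotopically on the full telescope. Alternatively, your route can be repaired by keeping the honest cone, splitting $C_\s^\bu(R)$ into its terms, observing that the modules $\Ext_R^*(C_\s^i(R),M)$ are $R[s_j^{-1}]$\+modules (Ext out of a module with invertible $s_j$\+action has invertible $s_j$\+action), and running a finite-filtration argument along the flat terms of $T^\bu(R,\s)$; either way, an idea beyond what you wrote is required.
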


\begin{proof}
 These computations are implicit in the proof of
Theorem~\ref{complexes-with-t-c-cohomology-equivalence}.
 In view of Lemma~\ref{cech-complex}(c), part~(a) generalizes to
the assertion that for any $R$\+module $M$ the morphism
$$
 T^\bu(R,\s)\ot_RM\rarrow T^\bu(R,\s)\ot_R\Hom_R(T^\bu(R,\s),M)
$$
induced by the natural morphism of complexes $T^\bu(R,\s)\rarrow R$
is a quasi-isomorphism.
 This is essentially~\cite[Lemma~7.6]{PSY}.
 One shows that the morphism of complexes
$$
 T^\bu_n(R,\s)\ot_RM\rarrow\Hom_R(T^\bu(R,\s)\;T^\bu_n(R,\s)\ot_RM)
$$
is a quasi-isomorphism for every~$n\ge1$.
 A cocone of the latter morphism is a complex computing
the $R$\+modules of morphisms
$\Hom_{\sD^\b(R\modl)}(C_\s^\bu(R)\;T^\bu_n(R,\s)\ot_RM[*])$
in the derived category $\sD^\b(R\modl)$.
 So it suffices to check that the $R$\+modules
$\Hom_{\sD^\b(R\modl)}(C_\s^i(R)\;T^\bu_n(R,\s)\ot_RM[*])$
vanish for every $i\ge0$.
 Indeed, the $R$\+module $C_\s^i(R)$ is a finite direct sum of
$R$\+modules in each of which one of the elements~$s_j$ acts
invertibly, while in the complex $T^\bu_n(R,\s)\ot_RM$
all the elements $s_j^n$ act contractibly.

 Similarly, in view of Lemma~\ref{telescope-complex}(c), part~(b)
generalizes to the assertion that for any $R$\+module $M$
the morphism
$$
 \Hom_R(T^\bu(R,\s)\;T^\bu(R,\s)\ot_RM)\rarrow
 \Hom_R(T^\bu(R,\s),M)
$$
induced by the morphism of complexes $T^\bu(R,\s)\rarrow R$ is
a quasi-isomorphism.
 This is essentially~\cite[Lemma~7.2]{PSY}.
 A cocone of the morphism $T^\bu(R,\s)\ot_RM\rarrow M$ is
quasi-isomorphic to the complex $C_\s^\bu(M)$, so it suffices to
check that the complex $\Hom_R(T^\bu(R,\s),C_\s^\bu(M))$ is acyclic.
 Indeed, let us show that the complex $\Hom_R(T^\bu(R,\s),C_\s^i(M))$
is acyclic for every $i\ge0$.
 By Lemma~\ref{telescope-complex}(a), the cohomology modules of
this complex are finite direct sums of $R$\+modules each of which
is simultaneously an $s_j$\+contramodule and an $R[s_j^{-1}]$\+module
for one of the elements~$s_j$.
 All such modules vanish.
\end{proof}

 Let $\fR=\varprojlim_n R/I^n$ denote the $I$\+adic completion of
the ring~$R$.
 The ring $\fR$ is a complete and separated topological ring in its
projective limit topology, which coincides with its $I$\+adic topology.
 An $I$\+torsion $R$\+module is the same thing as a discrete
$\fR$\+module.
 For any set $X$, we denote by $\fR[[X]]=\varprojlim_n R/I^n[X]$
the $R$\+module of all maps of sets $f\:X\rarrow\fR$ converging to
zero in the topology of $\fR$ (i.~e., for any open subset $U\subset\fR$
one has $f(x)\in U$ for all but a finite number of elements $x\in X$).
 As any $R$\+module that is separated and complete in its $I$\+adic
topology, $\fR[[X]]$ is an $I$\+contramodule (see
Section~\ref{derived-contramodules}).

 For the rest of the section we assume that the ideal $I\subset R$
is weakly proregular.
 Then the $R$\+module $\fR[[X]]$ is called the \emph{free
$I$\+contramodule $R$\+module generated by the set~$X$}
(cf.~\cite[Section~2.1]{Prev}).
 Part~(a) of the next lemma justifies the terminology.

\begin{lem} \label{free-contramodules}
\textup{(a)} For any $I$\+contramodule $R$\+module $P$ there is
a bijective correspondence between the $R$\+module morphisms\/
$\fR[[X]]\rarrow P$ and the maps of sets $X\rarrow P$.
 Free $I$\+contramodule $R$\+modules are projective objects in
the abelian category $R\modl_{I\ctra}$, there are enough of them, and
any projective $I$\+contramodule $R$\+module is a direct summand
of a free one. \par
\textup{(b)} For any set $X$ and any integer $n\ge1$,
the morphisms of complexes
$$
 C_\s^\bu(R[X])\sptilde\lrarrow C_\s^\bu(\fR[[X]])\sptilde
 \quad\textup{and}\quad
 T^\bu_n(R,\s)\ot_R R[X]\lrarrow T^\bu_n(R,\s)\ot_R\fR[[X]]
$$ 
induced by the completion map $R[X]\rarrow\fR[[X]]$ are
quasi-isomorphisms. \par
\textup{(c)} For any set $X$, the morphism of complexes
$$
 \Hom_R(T^\bu(R,\s),R[X])\lrarrow\Hom_R(T^\bu(R,\s),\fR[[X]])
$$
induced by the completion map $R[X]\rarrow\fR[[X]]$ is
a quasi-isomorphism.
\end{lem}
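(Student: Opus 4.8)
First I would identify $\fR[[X]]$ with the value of the left adjoint $\Delta_I$ of Proposition~\ref{functor-delta} on a free module. Since $R[X]=\bigoplus_X R$ is flat, Lemma~\ref{agree-on-flats} gives $\Delta_I(R[X])\simeq\Lambda_I(R[X])=\varprojlim_n R[X]/I^nR[X]=\varprojlim_n(R/I^n)[X]=\fR[[X]]$. Consequently, for any $I$\+contramodule $P$ the adjunction of Proposition~\ref{functor-delta} yields natural bijections $\Hom_R(\fR[[X]],P)=\Hom_R(\Delta_I(R[X]),P)\simeq\Hom_R(R[X],P)$, and the right-hand side is precisely the set of maps $X\rarrow P$; this is the asserted correspondence. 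As an abelian group the middle term is the product $\prod_X P$ formed in $R\modl_{I\ctra}$, an exact functor of $P$ because infinite products in $R\modl_{I\ctra}$ are exact; hence $\fR[[X]]$ is projective. For an arbitrary $I$\+contramodule $P$, applying the right exact functor $\Delta_I$ to the canonical surjection $R[\,|P|\,]\rarrow P$ from the free module on the underlying set $|P|$, and using $\Delta_I(P)\simeq P$ from Lemma~\ref{contramodules-adjusted}, produces a surjection $\fR[[\,|P|\,]]\rarrow P$; thus $R\modl_{I\ctra}$ has enough projectives (the free ones), and any projective splits off such a surjection, so is a direct summand of a free one.

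\textbf{Parts (c) and the \v Cech half of (b): the formal part.} For part~(c) I would use that $\Hom_R(T^\bu(R,\s),{-})\simeq\Hom_R(\Tel^\bu(R,\s),{-})$ (as $T^\bu\simeq\Tel^\bu$) computes $\boL\Delta_I$ by Lemmas~\ref{telescope-complex}(b) and~\ref{delta-finite-homol-dim}(a). Since $R[X]$ is flat, $\boL\Delta_I(R[X])$ is concentrated in degree~$0$ and equals $\fR[[X]]$ (Lemmas~\ref{agree-on-flats} and~\ref{delta-finite-homol-dim}(a)); since $\fR[[X]]$ is an $I$\+contramodule, $\boL\Delta_I(\fR[[X]])$ is likewise concentrated in degree~$0$ and equals $\fR[[X]]$ (Lemma~\ref{contramodules-adjusted}). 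The map in~(c) is $\boL\Delta_I$ of the completion map, which on $H^0$ is $\Delta_I$ of the reflection unit $R[X]\rarrow\Delta_I(R[X])$; as $\Delta_I$ inverts the units of its reflection, this is an isomorphism, and both complexes sitting in degree~$0$ forces a quasi-isomorphism. For the first (\v Cech) map in~(b), I would note $C_\s^\bu({-})\sptilde\simeq\Tel^\bu(R,\s)\ot_R{-}$ functorially (the cone of the flat quasi-isomorphism $\Tel^\bu(R,\s)\rarrow C_\s^\bu(R)\sptilde$ is a bounded acyclic complex of flat modules, hence stays acyclic after $\ot_R{-}$), so this map is $C_\s^\bu(R)\sptilde\ot_R{-}$ applied to the completion map. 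Identifying the completion map with the reflection $R[X]\rarrow\Hom_R(\Tel^\bu(R,\s),R[X])\simeq\fR[[X]]$ of $R[X]$ into the complexes with $I$\+contramodule cohomology, Theorem~\ref{complexes-with-t-c-cohomology-equivalence} applies: the complementary projector $C_\s^\bu(R)\sptilde\ot_R{-}$ annihilates the image of $k_*$, the cone of this reflection lies in that image, and so the reflection is carried to a quasi-isomorphism (no weak proregularity is needed here). Alternatively, the first map follows from the second by passing to the filtered colimit over $n$, since $\Tel^\bu(R,\s)=\varinjlim_n\Tel_n^\bu(R,\s)$ and $\Tel_n^\bu\simeq T_n^\bu$.

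\textbf{The second map in (b): the main obstacle.} Fix $n$. As $T_n^\bu(R,\s)$ is a \emph{finite} complex of \emph{finitely generated} free $R$\+modules, tensoring commutes with projective limits, so $T_n^\bu(R,\s)\ot_R\fR[[X]]\simeq\varprojlim_l\bigl(T_n^\bu(R,\s)\ot_R(R/I^l)[X]\bigr)$, an inverse system of complexes with degreewise surjective transition maps. The Milnor $\varprojlim$--$\varprojlim^1$ sequence then reduces the claim to showing that the canonical map from the constant system $\bigoplus_X H^*\bigl(T_n^\bu(R,\s)\bigr)=H^*\bigl(T_n^\bu(R,\s)\ot_R R[X]\bigr)$ into $\varprojlim_l\bigoplus_X H^*\bigl(T_n^\bu(R,\s)\ot_R R/I^l\bigr)$ is an isomorphism, and that the corresponding $\varprojlim^1_l$ vanishes. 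This is the crux, and the \emph{only} place where weak proregularity is genuinely needed. I would analyze the pro-system $\{H^*(T_n^\bu(R,\s)\ot_R R/I^l)\}_l$ exactly as in the proof of Lemma~\ref{agree-on-flats}: the long exact cohomology sequence of $0\rarrow I^l\rarrow R\rarrow R/I^l\rarrow0$ tensored with $T_n^\bu(R,\s)$ expresses the deviation from the constant system through $\{H^*(T_n^\bu(R,\s)\ot_R I^l)\}_l$, and weak proregularity (in the pro-vanishing form used for Lemmas~\ref{agree-on-flats} and~\ref{delta-finite-homol-dim}) is precisely what forces these pro-systems to behave, so that $\varprojlim_l$ recovers the constant value and $\varprojlim^1_l$ vanishes. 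The interaction with the infinite direct sum $\bigoplus_X$ is then harmless, since the transition maps are given by fixed elements of $R$ acting identically on each summand, so pro-zeroness and the Mittag--Leffler condition are preserved by $\bigoplus_X$. The Example following Lemma~\ref{agree-on-flats}, where the obstruction $\varprojlim^1_l H_1\Hom_R(\Tel_l^\bu(R,s),R)\simeq W[[s]]$ is nonzero for a non--weakly-proregular $s$, shows that this step genuinely fails without the hypothesis and that controlling these derived projective limits is the heart of the lemma.
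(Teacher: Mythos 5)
Your part~(a), part~(c), and the \v Cech half of~(b) are essentially correct, and close in spirit to the paper's own reductions: (a)~is exactly the paper's argument (Lemma~\ref{agree-on-flats} plus the adjunction of Proposition~\ref{functor-delta}); your~(c) substitutes the $\boL\Delta_I$ formalism (Lemmas~\ref{delta-finite-homol-dim}(a) and~\ref{contramodules-adjusted}) for the paper's appeal to Lemma~\ref{tensor-product-equivalences}(a), which amounts to the same computation; and your use of Theorem~\ref{complexes-with-t-c-cohomology-equivalence} for the \v Cech map is legitimate, though your parenthetical ``no weak proregularity is needed here'' applies only to the reflection step --- the identification $\Hom_R(T^\bu(R,\s),R[X])\simeq\fR[[X]]$, which you rightly established earlier, does use it.

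The genuine gap is in the second map of~(b), which you correctly single out as the crux. The Milnor $\varprojlim$--$\varprojlim^1$ reduction is fine, but the decisive claim --- that weak proregularity forces the deviation pro-systems $\{H^*(T^\bu_n(R,\s)\ot_R I^l)\}_l$ to be pro-zero ``exactly as in the proof of Lemma~\ref{agree-on-flats}'' --- is asserted, not proven, and the analogy does not supply it. The proof of Lemma~\ref{agree-on-flats} concerns the projective system indexed by the \emph{telescope truncation parameter}~$n$ with fixed flat coefficients, where weak proregularity applies verbatim through the base-change isomorphism $H_i\Hom_R(\Tel^\bu_n(R,\s),F)\simeq H_i\Hom_R(\Tel^\bu_n(R,\s),R)\ot_RF$. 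Your systems are indexed by the \emph{ideal power}~$l$ with $n$ fixed, and no base-change identity expresses $H^*(T^\bu_n(R,\s)\ot_R I^l)$ in terms of the pro-zero data $\{H_i\Hom_R(\Tel^\bu_n(R,\s),R)\}_n$ defining weak proregularity; bridging the two indexings is precisely the missing content (already in the Noetherian case one needs an Artin--Rees-type input, and in the general weakly proregular case you offer no mechanism at all).

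The paper avoids inverse limits here entirely. By the computation in the proof of Lemma~\ref{implicit-in-duality-theorem}(a), for \emph{every} $R$\+module $M$ and every~$n$ the morphism $T^\bu_n(R,\s)\ot_RM\rarrow\Hom_R(T^\bu(R,\s)\;T^\bu_n(R,\s)\ot_RM)\simeq T^\bu_n(R,\s)\ot_R\Hom_R(T^\bu(R,\s),M)$ is a quasi-isomorphism: its cocone computes $\Hom_{\sD^\b(R\modl)}(C_\s^\bu(R)\;T^\bu_n(R,\s)\ot_RM[*])$, which vanishes because each $C_\s^i(R)$ is a finite direct sum of modules on which some~$s_j$ acts invertibly, while $s_j^n$ acts null-homotopically on $T^\bu_n(R,\s)$. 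This step needs no weak proregularity; weak proregularity then enters exactly once, through the identification of $\Hom_R(T^\bu(R,\s),R[X])$ with $\fR[[X]]$ compatibly with the completion map (Lemmas~\ref{agree-on-flats} and~\ref{delta-finite-homol-dim}(a)), after which tensoring with the finite complex of free modules $T^\bu_n(R,\s)$ yields part~(b). To repair your proposal, replace the unproven $l$\+indexed pro-vanishing claim with this localization/contractibility argument, or else actually deduce the $l$\+indexed pro-vanishing from the $n$\+indexed definition of weak proregularity --- which is the step you have not done.
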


\begin{proof}
 Part~(a): by Lemma~\ref{agree-on-flats}, one has
$\Delta_I(R[X])\simeq\fR[[X]]$, so $\Hom_R(R[X],P)\simeq
\Hom_R(\fR[[X]],P)$ for any $I$\+contramodule~$P$.
 The remaining assertions immediately follow.
 Parts~(b\+c): by Lemmas~\ref{agree-on-flats}
and~\ref{delta-finite-homol-dim}(a), the complex
$\Hom_R(T^\bu(R,\s),R[X])$ is quasi-isomorphic to the $R$\+module
$\fR[[X]]$ and the morphism of complexes $R[X]\rarrow
\Hom_R(T^\bu(R,\s),R[X])$ computes the completion morphism
$R[X]\rarrow\fR[[X]]$.
 Now part~(b) is provided by the proof of
Lemma~\ref{implicit-in-duality-theorem}(a) and
part~(c) follows from Lemma~\ref{tensor-product-equivalences}(a).
\end{proof}

 It was shown in~\cite[Theorem~3.4]{Yek1}
and~\cite[Proposition~B.9.1]{Pweak} (see
also~\cite[Theorem~1.5(2)]{PSY2} and~\cite[Proposition~C.5.4]{Pcosh})
that the free $I$\+contramodule $R$\+modules $\fR[[X]]$ are flat
$R$\+modules when the ring $R$ is Noetherian.
 Part~(c) of the following lemma is a weak generalization of
this result to the weakly proregular case.

\begin{lem}  \label{finite-projective-torsion-cohomology-complex}
\textup{(a)} For any finite complex of finitely generated projective
$R$\+modules $K^\bu$ with $I$\+torsion cohomology modules,
the complex\/ $\Hom_R(K^\bu,R)$ also has $I$\+torsion
cohomology modules.
 Moreover, for any complex of $R$\+modules $M^\bu$, the complex\/
$\Hom_R(K^\bu,M^\bu)$ has $I$\+torsion cohomology modules. \par
\textup{(b)} For any finite complex of finitely generated
projective $R$\+modules $K^\bu$ with $I$\+tor\-sion cohomology modules
and any set $X$, the morphism of complexes of $R$\+modules
$$
 K^\bu\ot_R R[X]\lrarrow K^\bu\ot_R\fR[[X]]
$$
induced by the completion map $R[X]\rarrow\fR[[X]]$ is
a quasi-isomorphism. \par
\textup{(c)} Let $K^\bu\rarrow M^\bu$ be a quasi-isomorphism between
a complex of $I$\+torsion $R$\+modules $M^\bu$ and a finite complex
of finitely generated projective $R$\+modules $K^\bu$ with
$I$\+torsion cohomology modules.
 Then for any set $X$ the induced morphism
$$
 K^\bu\ot_R\fR[[X]]\lrarrow M^\bu\ot_R\fR[[X]]
$$
is a quasi-isomorphism of complexes of $R$\+modules.
\end{lem}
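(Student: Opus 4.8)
The plan is to treat the three parts in turn, reducing each to the Koszul/telescope computations already available. Throughout, for a finite complex $K^\bu$ of finitely generated projective $R$\+modules and any $s\in R$, localization commutes with homomorphisms out of $K^\bu$, giving $\Hom_R(K^\bu,M^\bu)[s^{-1}]\simeq\Hom_{R[s^{-1}]}(K^\bu[s^{-1}],M^\bu[s^{-1}])$ for every complex $M^\bu$. For part~(a) I would localize at each generator $s_j$ of~$I$: since the cohomology of $K^\bu$ is $I$\+torsion and $s_j\in I$, it is $s_j$\+torsion, so $K^\bu[s_j^{-1}]$ is a bounded acyclic complex of finitely generated projective $R[s_j^{-1}]$\+modules and hence contractible; therefore $\Hom_R(K^\bu,M^\bu)[s_j^{-1}]$ is contractible, in particular acyclic. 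Thus every cohomology module $N$ of $\Hom_R(K^\bu,M^\bu)$ satisfies $N[s_j^{-1}]=0$ for all $j$, which forces $N$ to be $I$\+torsion by the elementary observation already used in Lemma~\ref{cech-complex}(a): for $x\in N$ choose $a$ with $s_j^ax=0$ for every~$j$, and note that every monomial generator of $I^{m(a-1)+1}$ then annihilates~$x$. The first assertion is the case $M^\bu=R$.

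For part~(b) I would use a thick subcategory argument. Both $K^\bu\longmapsto K^\bu\ot_RR[X]$ and $K^\bu\longmapsto K^\bu\ot_R\fR[[X]]$ are additive functors on complexes preserving homotopy equivalences of complexes of projectives, so the termwise cone of the chain map induced by completion descends to a triangulated functor from perfect complexes to $\sD(R\modl)$. The perfect complexes with $I$\+torsion cohomology on which this cone is acyclic form a thick subcategory; by Proposition~\ref{derived-torsion-compact} the whole category of such complexes is generated, as a thick subcategory of $\sD(R\modl)$, by the single compact object $K^1_\bu(R,\s)$. Since Lemma~\ref{free-contramodules}(b) makes the cone acyclic on every $T^\bu_n(R,\s)$, in particular on the Koszul complex $T^\bu_1(R,\s)\cong K^1_\bu(R,\s)$, the vanishing locus contains a generator and is therefore everything.

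For part~(c) I would form the commutative square with vertices $K^\bu\ot_RR[X]$, $M^\bu\ot_RR[X]$, $K^\bu\ot_R\fR[[X]]$ and $M^\bu\ot_R\fR[[X]]$, the horizontal arrows induced by $K^\bu\to M^\bu$ and the vertical ones by completion. The top arrow is a quasi-isomorphism because $R[X]$ is flat, and the left arrow is one by part~(b); so, by two-out-of-three, the bottom arrow (which is the assertion of~(c)) is a quasi-isomorphism once the right arrow $M^\bu\ot_RR[X]\to M^\bu\ot_R\fR[[X]]$ is. This right arrow I would show to be an isomorphism of complexes: every term of $M^\bu$ is $I$\+torsion, tensor products commute with the filtered colimit $N=\varinjlim_k\{x\in N:I^kx=0\}$, and for $N$ annihilated by $I^k$ one has $N\ot_RA\simeq N\ot_R(A/I^kA)$ for any $A$; so it remains to identify $\fR[[X]]/I^k\fR[[X]]$ with $(R/I^k)[X]$ compatibly with completion. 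I would get this by applying the right exact functor $\Delta_I$ of Proposition~\ref{functor-delta} to a presentation $\bigoplus_l R[X]\to R[X]\to(R/I^k)[X]\to0$ coming from a finite generating set of~$I^k$: since $\Delta_I(R[X])\simeq\fR[[X]]$ by Lemma~\ref{agree-on-flats} and $(R/I^k)[X]$, being annihilated by~$I^k$, is an $I$\+contramodule fixed by $\Delta_I$, the cokernel computation yields $\fR[[X]]/I^k\fR[[X]]\simeq(R/I^k)[X]$.

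The main obstacle is precisely this last identification in part~(c). Comparing $I^k\fR[[X]]$ directly with the kernel of the reduction map $\fR[[X]]\to(R/I^k)[X]$ is delicate, since dividing a null-convergent family of elements of $I^k\fR$ by the generators of $I^k$ need not preserve convergence. Routing the computation through the right exact functor $\Delta_I$, rather than through the $I$\+adic filtration directly, circumvents this entirely, and is exactly the payoff of having constructed $\Delta_I$ in Section~\ref{derived-contramodules}.
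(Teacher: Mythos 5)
Your proof is correct; it follows the paper's route in parts~(b) and~(c) but handles part~(a), and one key step of part~(c), differently. For part~(a) the paper does not localize: it invokes Proposition~\ref{derived-torsion-compact} to present $K^\bu$, up to homotopy, as a direct summand of a complex built from $K^n_\bu(R,\s)$ by shifts and cones, and then uses the self-duality of the Koszul complexes up to shift; this yields the marginally stronger conclusion that the cohomology of $K^\bu$, and hence of $\Hom_R(K^\bu,M^\bu)$, is annihilated by a \emph{fixed} power of~$I$. Your localization argument --- $K^\bu[s_j^{-1}]$ is a bounded acyclic complex of projectives, hence contractible, hence $\Hom_R(K^\bu,M^\bu)[s_j^{-1}]$ is contractible (legitimate, since $K^\bu$ is a finite complex of finitely presented modules), and a module whose elements are killed by powers of each~$s_j$ is $I$\+torsion by the pigeonhole bound $I^{m(a-1)+1}x=0$ --- is a genuinely different, equally elementary route proving exactly the stated assertion, likewise without weak proregularity. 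Your part~(b) is the paper's argument rephrased in thick-subcategory language (the paper literally deduces~(b) from Lemma~\ref{free-contramodules}(b) by ``the same argument based on Proposition~\ref{derived-torsion-compact}''); the only slip is cosmetic: $K^1_\bu(R,\s)$ is not isomorphic to $T^\bu_1(R,\s)$ but to its shift $T^\bu_1(R,\s)[m]$, which of course does not affect acyclicity of the cone. In part~(c) your commutative square and two-out-of-three reduction are exactly the paper's; the difference is that the paper simply asserts that $M^\bu\ot_RR[X]\rarrow M^\bu\ot_R\fR[[X]]$ is a termwise isomorphism ``since $M^\bu$ is a complex of $I$\+torsion $R$\+modules'', whereas you justify this step in full: reduction along $N=\varinjlim_k\{x\in N: I^kx=0\}$, and the identification $\fR[[X]]/I^k\fR[[X]]\simeq(R/I^k)[X]$, which you obtain by applying the right exact functor $\Delta_I$ to a finite presentation of $(R/I^k)[X]$, using Lemma~\ref{agree-on-flats} and the observation that a module annihilated by $I^k$ is an $I$\+contramodule and so is fixed by~$\Delta_I$. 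This is valid, and making the step explicit is a genuine service. One remark on your closing comment: the equality $I^k\fR[[X]]=\ker\bigl(\fR[[X]]\rarrow(R/I^k)[X]\bigr)$ is in fact a standard property of adic completion valid for \emph{every} finitely generated ideal (the division-with-convergence bookkeeping can be carried out by successive approximation), so the detour through $\Delta_I$ is a convenience rather than a necessity; the weak proregularity hypothesis enters parts~(b) and~(c) only through Lemma~\ref{free-contramodules}(b).
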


\begin{proof}
 Part~(a) does not depend on the weak proregularity assumption yet.
 To prove it, choose a finite sequence of elements~$\s$ generating
the ideal $I\subset R$ and an integer $n\ge1$.
 By Proposition~\ref{derived-torsion-compact}, the complex $K^\bu$,
viewed as an object of the homotopy category $\Hot(R\modl)$, is
a direct summand of a complex obtained from the complex
$K^n_\bu(R,\s)$ using the operations of shift and cone.
 Since the complex $K^n_\bu(R,\s)$ is self-dual up to a shift,
the first assertion follows.
 Moreover, all the cohomology modules of the complex $K^n_\bu(R,\s)$
are annihilated by a large enough power of the ideal~$I$, and
consequently so are all the cohomology modules of the complex $K^\bu$
and every cohomology module of the complex $\Hom_R(K^\bu,M^\bu)$.

 By virtue of the same argument based on
Proposition~\ref{derived-torsion-compact}, part~(b) follows from
Lemma~\ref{free-contramodules}(b).
 To prove part~(c), notice that the morphism $K^\bu\ot_R R[X]\rarrow
M^\bu\ot_R R[X]$ is a quasi-isomorphism because the $R$\+module $R[X]$
is flat, the morphism $M^\bu\ot_R R[X]\rarrow M^\bu\ot_R\fR[[X]]$ is
an isomorphism of complexes of $R$\+modules since $M^\bu$ is a complex
of $I$\+torsion $R$\+modules, and the morphism $K^\bu\ot_R R[X]\rarrow
K^\bu\ot_R\fR[[X]]$ is a quasi-isomorphism by part~(b).
\end{proof}

 Clearly, the abelian category of $I$\+torsion $R$\+modules
$R\modl_{I\tors}$ has enough injective objects.
 Moreover, the maximal $I$\+torsion submodule $\Gamma_I(J)$ of
any injective $R$\+module $J$ is an injective object of
$R\modl_{I\tors}$.
 There are enough injective objects of this particular form in
$R\modl_{I\tors}$, so any injective object of $R\modl_{I\tors}$
a direct summand of the $R$\+module $\Gamma_I(J)$ for some
injective $R$\+module~$J$.

 For a Noetherian ring $R$, it follows from the Artin--Rees lemma
that any injective object of the category $R\modl_{I\tors}$ is
simultaneously an injective object of the category $R\modl$.
 Part~(b) of the next lemma can be sometimes used in lieu of
this assertion in the case of a weakly proregular ideal~$I$.

\begin{lem} \label{injective-torsion}
\textup{(a)} Let $J$ be an injective $R$\+module and $H=\Gamma_I(J)$
be its maximal $I$\+torsion submodule.
 Then for any bounded above complex of projective $R$\+modules $L^\bu$
with $I$\+torsion cohomology modules the morphism of complexes
of $R$\+modules
$$
 \Hom_R(L^\bu,H)\lrarrow\Hom_R(L^\bu,J)
$$
induced by the embedding map $H\rarrow J$ is a quasi-isomorphism. \par
\textup{(b)} Let $L^\bu\rarrow M^\bu$ be a quasi-isomorphism between
a complex of $I$\+torsion $R$\+modules $M^\bu$ and a bounded above
complex of projective $R$\+modules $L^\bu$ with $I$\+torsion
cohomology modules.
 Then for any injective object $H$ of the abelian category of
$I$\+torsion $R$\+modules the induced morphism
$$
 \Hom_R(M^\bu,H)\lrarrow\Hom_R(L^\bu,H)
$$
is a quasi-isomorphism of complexes of $R$\+modules.
\end{lem}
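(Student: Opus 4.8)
The plan is to deduce part~(b) formally from part~(a), so the real content is part~(a). For part~(a), I would first reduce the statement to an acyclicity claim. Since $L^\bu$ is a complex of projective $R$\+modules, applying $\Hom_R(L^\bu,{-})$ to the short exact sequence of $R$\+modules $0\rarrow H\rarrow J\rarrow J/H\rarrow0$, where $J/H=J/\Gamma_I(J)$, yields a short exact sequence of complexes. Passing to the long exact cohomology sequence, the map in question is a quasi-isomorphism if and only if the complex $\Hom_R(L^\bu,J/H)$ is acyclic. The key move is then to identify $J/H$ with the \v Cech complex $C_\s^\bu(J)$ in the derived category, using weak proregularity: since $J$ is injective, the definition of weak proregularity gives $H^iC_\s^\bu(J)\sptilde=0$ for $i>0$, while Lemma~\ref{cech-complex}(b) gives $H^0C_\s^\bu(J)\sptilde\simeq\Gamma_I(J)=H$, so $C_\s^\bu(J)\sptilde$ is quasi-isomorphic to~$H$. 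The cocone triangle $C_\s^\bu(J)\sptilde\rarrow J\rarrow C_\s^\bu(J)\rarrow C_\s^\bu(J)\sptilde[1]$ of the coaugmentation $k\:J\rarrow C_\s^\bu(J)$ then reads $H\rarrow J\rarrow C_\s^\bu(J)$, identifying $C_\s^\bu(J)$ with $\cone(H\rarrow J)\simeq J/H$. As $L^\bu$ is a bounded above complex of projectives, $\Hom_R(L^\bu,{-})$ preserves quasi-isomorphisms, so $\Hom_R(L^\bu,J/H)$ is quasi-isomorphic to $\Hom_R(L^\bu,C_\s^\bu(J))$.

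It then remains to show that $\Hom_R(L^\bu,C_\s^\bu(J))$ is acyclic. Each term of the finite complex $C_\s^\bu(J)$ is a finite direct sum of localizations $J[s_{j_0}^{-1},\dotsc,s_{j_p}^{-1}]$, in each of which some element~$s_j$ acts invertibly; such a summand $N$ is an $R[s_j^{-1}]$\+module, so $\Hom_R(L^\bu,N)\simeq\Hom_{R[s_j^{-1}]}(L^\bu[s_j^{-1}],N)$. Because the cohomology modules of $L^\bu$ are $I$\+torsion, each of their elements is annihilated by a power of~$s_j\in I$, whence $L^\bu[s_j^{-1}]$ is an acyclic bounded above complex of projective $R[s_j^{-1}]$\+modules, and therefore contractible. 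Consequently $\Hom_R(L^\bu,N)$ is contractible, and the total complex $\Hom_R(L^\bu,C_\s^\bu(J))$, assembled from finitely many such acyclic complexes by shifts and cones, is acyclic. This completes part~(a).

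For part~(b) I would first reduce to the case $H=\Gamma_I(J)$ for an injective $R$\+module~$J$, since every injective object of $R\modl_{I\tors}$ is a direct summand of such a module (as recalled before the lemma) and the asserted quasi-isomorphism is compatible with direct summands. Then I would consider the commutative square of Hom complexes whose horizontal arrows are induced by $L^\bu\rarrow M^\bu$ and whose vertical arrows are induced by the embedding $H\rarrow J$,
$$
\begin{array}{ccc}
 \Hom_R(M^\bu,H) & \rarrow & \Hom_R(L^\bu,H) \\
 \downarrow & & \downarrow \\
 \Hom_R(M^\bu,J) & \rarrow & \Hom_R(L^\bu,J).
\end{array}
$$
The left vertical arrow is an isomorphism: every $R$\+homomorphism from an $I$\+torsion module into $J$ has $I$\+torsion image, hence factors through $H=\Gamma_I(J)$, so $\Hom_R(M^n,H)=\Hom_R(M^n,J)$ for each~$n$. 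The bottom horizontal arrow is a quasi-isomorphism because $J$ is injective, so $\Hom_R({-},J)$ is exact and sends the acyclic cone of $L^\bu\rarrow M^\bu$ to an acyclic complex. The right vertical arrow is a quasi-isomorphism by part~(a). Hence the top horizontal arrow $\Hom_R(M^\bu,H)\rarrow\Hom_R(L^\bu,H)$ is a quasi-isomorphism, which is the claim.

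The main obstacle is the acyclicity statement at the heart of part~(a): the module $J/\Gamma_I(J)$ is opaque by itself (indeed $\boR\Hom_R$ from a torsion module into an arbitrary torsion-free module need not vanish), and the whole argument rests on replacing it by the \v Cech complex $C_\s^\bu(J)$, whose terms are localizations inverting the~$s_j$. This replacement is precisely where the weak proregularity hypothesis enters, through the vanishing $H^iC_\s^\bu(J)\sptilde=0$ for injective~$J$ and $i>0$. Once this identification is in hand, the vanishing reduces to the contractibility of the localized complexes $L^\bu[s_j^{-1}]$, and part~(b) becomes a purely formal diagram chase.
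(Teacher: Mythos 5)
Your proof is correct, and its skeleton is the paper's own: you invoke weak proregularity exactly where the paper does, namely to identify $C_\s^\bu(J)\sptilde$ with $H=\Gamma_I(J)$ so that the coaugmentation triangle $C_\s^\bu(J)\sptilde\rarrow J\rarrow C_\s^\bu(J)$ reduces part~(a) to the acyclicity of $\Hom_R(L^\bu,C_\s^\bu(J))$ (your detour through $J/H\simeq\cone(H\rarrow J)$ is just an equivalent reading of the same triangle, which the paper handles by applying $\Hom_R(L^\bu,{-})$ to the quasi-isomorphism $H\rarrow C_\s^\bu(J)\sptilde$ directly), and your part~(b) --- reduction to $H=\Gamma_I(J)$, the termwise isomorphism $\Hom_R(M^\bu,H)\simeq\Hom_R(M^\bu,J)$, exactness of $\Hom_R({-},J)$ for injective $J$, and part~(a) for the remaining arrow --- coincides with the paper's argument. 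The one genuine divergence is the final acyclicity step in part~(a): the paper shows that the cohomology modules of $\Hom_R(L^\bu,C_\s^i(J))$ are simultaneously $I$-contramodules (via the contractibility of $L^\bu\ot_R T^\bu(R,s_j)'$, as in the proof of Lemma~\ref{telescope-complex}(a)) and $R[s_j^{-1}]$-modules, and hence vanish; you instead use the extension-of-scalars adjunction $\Hom_R(L^\bu,N)\simeq\Hom_{R[s_j^{-1}]}(L^\bu[s_j^{-1}],N)$ together with the observation that $L^\bu[s_j^{-1}]$ is an acyclic bounded above complex of projective $R[s_j^{-1}]$-modules, hence contractible. Both variants feed on the same input --- the acyclicity of the localizations $L^\bu[s_j^{-1}]$ --- but yours is slightly more direct, obtaining contractibility of the whole Hom complex at once and bypassing the contramodule characterization entirely, while the paper's version has the merit of running in parallel with the proofs of Lemma~\ref{implicit-in-duality-theorem}(b) and Lemma~\ref{telescope-complex}(a) that it cites. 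One small caveat: your blanket assertion that $\Hom_R(L^\bu,{-})$ preserves quasi-isomorphisms deserves a word of justification (bounded above complexes of projectives are K-projective), although in your application only quasi-isomorphisms between bounded complexes occur, for which termwise projectivity of $L^\bu$ already suffices.
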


\begin{proof}
 Part~(a) is implicit in the adjunction of derived functors from
the proof of Theorem~\ref{torsion-fully-faithful} together with
the result of Corollary~\ref{complexes-with-torsion-cohomology}.
 To give a direct proof, notice that by the weak proregularity
assumption on the ideal $I$ the complex $C_\s^\bu(J)\sptilde$
is quasi-isomorphic to the $R$\+module $H$ and the morphism of
complexes $C_\s^\bu(J)\sptilde\rarrow J$ computes the embedding
morphism $H\rarrow J$.
 The quasi-isomorphism of complexes of $R$\+modules
$H\rarrow C_\s^\bu(J)\sptilde$ induces a quasi-isomorphism
$\Hom_R(L^\bu,H)\rarrow\Hom_R(L^\bu,C_\s^\bu(J)\sptilde)$, so it remains
to show that the complex $\Hom_R(L^\bu,C_\s^\bu(J))$ is acyclic.
 Here one argues as in the proof of
Lemma~\ref{implicit-in-duality-theorem}(b) together with
the proof of Lemma~\ref{telescope-complex}(a).
 For every $i\ge0$, the cohomology modules of the complex
$\Hom_R(L^\bu,C_\s^i(J))$ are $I$\+contramodule $R$\+modules,
since the complex $L^\bu\ot_R T^\bu(R,s)'$ is contractible for
any $s\in I$ as a bounded above complex of projective $R$\+modules
quasi-isomorphic to the acyclic complex $L^\bu[s^{-1}]$. 
 Being at the same time isomorphic to finite direct sums of
$R$\+modules each of which is an $R[s_j^{-1}]$\+module for one of
the elements~$s_j$, these cohomology modules have to vanish.

 Part~(b) is deduced in the way similar to the proof of
Lemma~\ref{finite-projective-torsion-cohomology-complex}(c).
 One can assume that $H=\Gamma_I(J)$ for some injective $R$\+module
$J$, as there are enough injective $I$\+torsion $R$\+modules of
this form.
 Then the morphism $\Hom_R(M^\bu,J)\rarrow\Hom_R(L^\bu,J)$ is
a quasi-isomorphism because the $R$\+module $J$ is injective,
the morphism $\Hom_R(M^\bu,H)\rarrow\Hom_R(M^\bu,J)$ is an isomorphism,
and the morphism $\Hom_R(L^\bu,H)\rarrow\Hom_R(L^\bu,J)$ is
a quasi-isomorphism by part~(a).
\end{proof}

\begin{rem} \label{ext-tor-comparison}
 In other words, Lemma~\ref{injective-torsion}(b) simply means that
the Ext modules $\Ext_R^i(M,H)$ computed in the abelian category of
$R$\+modules between an $I$\+torsion $R$\+module $M$ and
an injective $I$\+torsion $R$\+module $H$ vanish for all $i>0$.
 This is but a particular case of Theorem~\ref{torsion-fully-faithful}.
 Similarly, it follows from Theorem~\ref{contra-fully-faithful}
that $\Ext_R^i(\fR[[X]],P)=0$ for any $I$\+contramodule $R$\+module
$P$ and all~$i>0$.
 Taking into account the natural isomorphism
$\Hom_R(\Tor_i^R(M,N),J)\simeq\Ext_R^i(N,\Hom_R(M,J))$
for an injective $R$\+module $J$ and any $R$\+modules $M$, $N$,
together with the fact that the $R$\+module $\Hom_R(M,J)$ is
an $I$\+contramodule for any $I$\+torsion $R$\+module $M$ and
any $R$\+module $J$ (see Section~\ref{derived-contramodules}),
one concludes that $\Tor_i^R(M,\fR[[X]])=0$ for any $I$\+torsion
$R$\+module $M$ and any set~$X$.
 This is a stronger version of
Lemma~\ref{finite-projective-torsion-cohomology-complex}(c).
\end{rem}

 As in Section~\ref{dedualizing-artinian-quotient}, a finite complex of
$I$\+torsion $R$\+modules $L^\bu$ is said to have \emph{projective
dimension\/~$\le d$} if one has $\Hom_{\sD^\b(R\modl_{I\tors})}
(L^\bu,M[n])=0$ for all $I$\+torsion $R$\+modules $M$ and all
the integers $n>d$.

 By Lemma~\ref{free-contramodules}(a), the bounded above derived
category $\sD^-(R\modl_{I\ctra})$ is equivalent to the homotopy
category of bounded above complexes of projective $I$\+contramodule
$R$\+modules.
 Given a complex of $I$\+torsion $R$\+modules $M^\bu$ and a bounded
above complex of $I$\+contramodule $R$\+modules $P^\bu$, we denote
by $\Ctrtor^{(R,I)}_*(M^\bu,P^\bu)$ the homology $R$\+modules
$$
 \Ctrtor^{(R,I)}_n(M^\bu,P^\bu)=H^{-n}(M^\bu\ot_RF^\bu)
$$
of the tensor product of the complex of $I$\+torsion $R$\+modules
$M^\bu$ with a bounded above complex of projective $I$\+contramodule
$R$\+modules $F^\bu$ quasi-isomorphic to the complex~$P^\bu$.
 By Remark~\ref{ext-tor-comparison}, the functors
$\Ctrtor^{(R,I)}_n(M^\bu,P^\bu)$ and $H^{-n}(M^\bu\ot_R^\boL P^\bu)$
agree wherever the former of them is defined.

 A finite complex of $I$\+torsion $R$\+modules $L^\bu$ is said to
have \emph{contraflat dimension\/~$\le d$} if one has
$\Ctrtor^{(R,I)}_n(L^\bu,P)=0$ for all $I$\+contramodule $R$\+modules
$P$ and all the integers $n>d$.
 The next one is the main definition of this section.

 A finite complex of $I$\+torsion $R$\+modules $B^\bu$ is called
a \emph{dedualizing complex} for the ideal $I\subset R$ if
the following conditions hold:
\begin{enumerate}
\renewcommand{\theenumi}{\roman{enumi}}
\item the complex $B^\bu$ has finite projective and contraflat
dimensions as a complex of $I$\+torsion $R$\+modules;
\item the homothety map $\fR\rarrow\Hom_{\sD^\b(R\modl_{I\tors})}
(B^\bu,B^\bu[*])$ is an isomorphism of graded rings;
\item for any finite complex of finitely generated projective
$R$\+modules $K^\bu$ with $I$\+torsion cohomology modules,
the complex $\Hom_R(K^\bu,B^\bu)$ is a compact object of
the derived category $\sD(R\modl_{I\tors})$.
\end{enumerate}

 The dedualizing complex $B^\bu$ is viewed as an object of the derived
category $\sD^\b(R\modl_{I\tors})$.

\begin{thm} \label{dedualizing-contravariant-duality}
 Let $I$ be a weakly proregular finitely generated ideal in
a commutative ring~$R$.
 Then for any finite complex of $I$\+torsion $R$\+modules $B^\bu$
satisfying the conditions~(ii\+iii) there is an involutive
anti-equivalence on the full subcategory of compact objects in\/
$\sD(R\modl_{I\tors})$ provided by a derived functor\/
$\boR\Hom_R({-},B^\bu)$.
\end{thm}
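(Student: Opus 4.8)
The plan is to work entirely inside the subcategory $\C^c$ of compact objects of $\C=\sD(R\modl_{I\tors})$, which under the weak proregularity hypothesis is identified with $\sD_{I\tors}(R\modl)$ (Theorem~\ref{torsion-fully-faithful} together with Corollary~\ref{complexes-with-torsion-cohomology}). By Proposition~\ref{derived-torsion-compact}, $\C^c$ is the thick subcategory generated by the single compact object $G=K^1_\bu(R,\s)$; concretely its objects are the finite complexes of finitely generated projective $R$\+modules with $I$\+torsion cohomology, up to direct summands. The first step is to check that $D=\boR\Hom_R({-},B^\bu)$ restricts to a contravariant triangulated endofunctor of $\C^c$. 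For a perfect complex $K^\bu$ one has $\boR\Hom_R(K^\bu,B^\bu)=\Hom_R(K^\bu,B^\bu)$ computed termwise, with no resolution of $B^\bu$ required (so condition~(i) plays no role here, consistent with the hypotheses of the statement). This complex has $I$\+torsion cohomology by Lemma~\ref{finite-projective-torsion-cohomology-complex}(a), hence lies in $\sD_{I\tors}(R\modl)=\C$, and it is compact by condition~(iii). Thus $D$ maps $\C^c$ into $\C^c$.

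Next, $DD=\boR\Hom_R(\boR\Hom_R({-},B^\bu),B^\bu)$ is a covariant triangulated endofunctor of $\C^c$, equipped with a canonical biduality (evaluation) natural transformation $\eta\:\Id_{\C^c}\rarrow DD$. Since $\eta$ is a morphism of triangulated functors, the full subcategory of those $K^\bu$ for which $\eta_{K^\bu}$ is an isomorphism is thick; as $\C^c$ is generated by $G$ as a thick subcategory, it suffices to prove that $\eta_G$ is an isomorphism. Once that is established, $\eta$ is a natural isomorphism on all of $\C^c$, which shows simultaneously that $D$ is an anti\+equivalence of $\C^c$ with itself and that it is involutive (it is its own quasi\+inverse).

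The computation on the generator is where condition~(ii) becomes decisive. Because $G$ is perfect, I would use the identities $\boR\Hom_R(G,B^\bu)\simeq G^\vee\ot_R B^\bu$ with $G^\vee=\Hom_R(G,R)$ again perfect, followed by the tensor--Hom adjunction, to rewrite $DD(G)=\boR\Hom_R(G^\vee\ot_R B^\bu,\,B^\bu)\simeq\boR\Hom_R(G^\vee,\,\boR\Hom_R(B^\bu,B^\bu))$. By full faithfulness of $\sD^\b(R\modl_{I\tors})\rarrow\sD^\b(R\modl)$ (Theorem~\ref{torsion-fully-faithful}), condition~(ii) says precisely that the homothety map identifies $\boR\Hom_R(B^\bu,B^\bu)$ with $\fR$ concentrated in cohomological degree~$0$. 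Substituting this and invoking perfectness of $G^\vee$ once more gives $DD(G)\simeq\boR\Hom_R(G^\vee,\fR)\simeq G\ot_R\fR$, and the natural map $G=G\ot_R R\rarrow G\ot_R\fR$ is a quasi\+isomorphism by Lemma~\ref{finite-projective-torsion-cohomology-complex}(b) applied to a one\+point set~$X$ (see also Remark~\ref{ext-tor-comparison}).

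The main obstacle, and the step demanding the most care, is verifying that under this chain of identifications the abstract evaluation morphism $\eta_G$ genuinely corresponds to the natural completion quasi\+isomorphism $G\rarrow G\ot_R\fR$, rather than to some other self\+map of $G\ot_R\fR$. This is exactly why condition~(ii) is phrased as an isomorphism of graded rings \emph{via the homothety map}: it guarantees that the unit $R\rarrow\boR\Hom_R(B^\bu,B^\bu)$ factors through the completion $R\rarrow\fR$ in the expected manner, so that the adjunction isomorphisms transport $\eta_G$ to the completion map. Tracking this compatibility through the tensor--Hom adjunction is the technical heart of the argument; the remaining ingredients---thickness of the locus where $\eta$ is invertible, and the well\+definedness of $D$ on $\C^c$---are formal.
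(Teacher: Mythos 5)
Your argument is correct, and its computational core coincides with the paper's: everything reduces to showing that $K^\bu\rarrow\fR\ot_RK^\bu$ is a quasi-isomorphism, which follows from condition~(ii) combined with Theorem~\ref{torsion-fully-faithful} (identifying $\boR\Hom_R(B^\bu,B^\bu)$ with $\fR$) and Lemma~\ref{finite-projective-torsion-cohomology-complex}(b) applied to a one-element set. The packaging, however, differs in two respects. First, the paper does not pass through the generator $G=K^1_\bu(R,\s)$: it exhibits $\boR\Hom_R({-},B^\bu)$ as right adjoint to itself via the symmetry $\Hom_R(L^\bu,\Hom_R(K^\bu,B^\bu))\simeq\Hom_R(K^\bu,\Hom_R(L^\bu,B^\bu))$ and then proves the adjunction unit is an isomorphism directly for \emph{every} finite complex of finitely generated projective $R$\+modules $K^\bu$ with $I$\+torsion cohomology; your d\'evissage through the thick subcategory where $\eta$ is invertible is valid but turns out to be unnecessary. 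Second---and this is what the direct route buys---the compatibility problem you rightly flag as the technical heart dissolves at the level of complexes: replacing the outer copy of $B^\bu$ by a quasi-isomorphic bounded below complex of injective $R$\+modules $J^\bu$, one has an honest termwise isomorphism of complexes $\Hom_R(\Hom_R(K^\bu,B^\bu),J^\bu)\simeq K^\bu\ot_R\Hom_R(B^\bu,J^\bu)$ (valid because the $K^i$ are finitely generated projective), under which the evaluation morphism is literally $\id_{K^\bu}$ tensored with the unit map $R\rarrow\Hom_R(B^\bu,J^\bu)$; condition~(ii) then identifies the target with $\fR$ and the unit with the completion map, with no transport of $\eta_G$ through derived tensor--Hom adjunctions needed. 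So the step you left as an assertion is genuinely fillable, and filling it the paper's way simultaneously renders the generator reduction superfluous. The remaining ingredients of your write-up---well-definedness of the functor on compacts via Lemma~\ref{finite-projective-torsion-cohomology-complex}(a) and condition~(iii), the identification of compacts via Proposition~\ref{derived-torsion-compact}, and the observation that condition~(i) is not used---match the paper exactly.
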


\begin{proof}
 By Theorem~\ref{torsion-fully-faithful} and
Corollary~\ref{complexes-with-torsion-cohomology}, one has
$\sD(R\modl_{I\tors})=\sD_{I\tors}(R\modl)\subset\sD(R\modl)$.
 According to Proposition~\ref{derived-torsion-compact}, any compact
object of $\sD_{I\tors}(R\modl)$ is also compact in $\sD(R\modl)$, so
the full subcategory of compact objects in $\sD_{I\tors}(R\modl)$ is
equivalent to the homotopy category of finite complexes of finitely
generated projective $R$\+modules with $I$\+torsion cohomology modules
(see~\cite[Theorem~2.8]{BS}).
 To compute the image of a compact object
$M^\bu\in\sD(R\modl_{I\tors})$ under the functor
$\boR\Hom_R({-},B^\bu)$, one chooses a finite complex of finitely
generated projective $R$\+modules $K^\bu$ together with
a quasi-isomorphism of complexes of $R$\+modules $K^\bu\rarrow M^\bu$
and applies the functor $\Hom_R({-},B^\bu)$ to the complex~$K^\bu$.
 The natural isomorphism of complexes
$$
 \Hom_R(L^\bu,\Hom_R(K^\bu,B^\bu))\simeq\Hom_R(K^\bu,\Hom_R(L^\bu,B^\bu))
$$
for any complexes of $R$\+modules $K^\bu$ and $L^\bu$ makes
the contravariant endofunctor $\boR\Hom_R({-},B^\bu)$ on the category
of compact objects in $\sD(R\modl_{I\tors})$ right adjoint to itself.
 It remains to show that the adjunction morphism $K^\bu\rarrow
\boR\Hom_R(\Hom_R(K^\bu,B^\bu),B^\bu)$ is an isomorphism in
$\sD(R\modl_{I\tors})$ for any finite complex of finitely generated
projective $R$\+modules $K^\bu$ with $I$\+torsion cohomology modules.
{\emergencystretch=1em\par}

 Let ${}'\!\.B^\bu$ be a bounded below complex of injective $R$\+modules
endowed with a quasi-isomorphism $B^\bu\rarrow{}'\!\.B^\bu$.
 Then it suffices to check that the natural morphism
$K^\bu\rarrow\Hom_R(\Hom_R(K^\bu,B^\bu),{}'\!\.B^\bu)\simeq
K^\bu\ot_R\Hom_R(B^\bu,{}'\!\.B^\bu)$ is a quasi-isomorphism of complexes
of $R$\+modules.
 By the condition~(ii) together with
Theorem~\ref{torsion-fully-faithful},
the complex $\Hom_R(B^\bu,{}'\!\.B^\bu)$ is quasi-isomorphic to
the ring $\fR$, so we have to show that the morphism
$K^\bu\rarrow\fR\ot_RK^\bu$ is a quasi-isomorphism.
 But this is the particular case of
Lemma~\ref{finite-projective-torsion-cohomology-complex}(b)
for a one-element set~$X$.
\end{proof}

 The following example explains where one can get a dedualizing
complex for an arbitrary weakly proregular finitely generated ideal
in a commutative ring.

\begin{ex} \label{dedualizing-torsion-example}
 Let $I$ be a weakly proregular finitely generated ideal in
a commutative ring~$R$.
 As in Example~\ref{dedualizing-artinian-quotient-example}, consider
the derived functor of maximal $I$\+torsion submodule
$\boR\Gamma_I\:\sD^\b(R\modl)\rarrow\sD^\b(R\modl_{I\tors})$.
 It is claimed that the complex $B^\bu=\boR\Gamma_I(R)$ is
a dedualizing complex for the ideal $I\subset R$.

 Indeed, one has $\Hom_{\sD^\b(R\modl_{I\tors})}(B^\bu,M[*])\simeq
\Hom_{\sD^\b(R\modl)}(B^\bu,M[*])$ for any $I$\+torsion $R$\+module $M$
by (the case $\st=\b$ of) Theorem~\ref{torsion-fully-faithful}, so
the complex $B^\bu$, having finite projective dimension as
a complex of $R$\+modules for the reason of being isomorphic to
the complex $T^\bu(R,\s)$ in $\sD^\b(R\modl)$, also has finite
projective dimension as a complex of $I$\+torsion $R$\+modules.

 Furthermore, the quasi-isomorphism of complexes of $R$\+modules
$T^\bu(R,\s)\rarrow B^\bu$ induces a quasi-isomorphism of tensor
products $T^\bu(R,\s)\ot_R\fR[[X]]\rarrow B^\bu\ot_R\fR[[X]]$.
 Indeed, the complex $B^\bu$ being a complex of $I$\+torsion
$R$\+modules, the morphism $B^\bu\ot_R R[X]\rarrow B^\bu\ot_R\fR[[X]]$
induced by the completion morphism $R[X]\rarrow\fR[[X]]$ is
an isomorphism of complexes, while the morphism
$T^\bu(R,\s)\ot_R R[X]\rarrow B^\bu\ot_R R[X]$ is
a quasi-isomorphism since the $R$\+module $R[X]$ is flat, and
the morphism $T^\bu(R,\s)\ot_R R[X]\rarrow T^\bu(R,\s)\ot_R\fR[[X]]$
is a quasi-isomorphism by Lemma~\ref{free-contramodules}(b).
 Since $T^\bu(R,\s)$ is a complex of flat $R$\+modules, it follows
that the homology $R$\+modules of the complex $T^\bu(R,\s)\ot_R P$
are isomorphic to the $R$\+modules $\Ctrtor^{(R,I)}_*(B^\bu,P)$
for any $I$\+contramodule $R$\+module $P$, so the complex $B^\bu$
has finite contraflat dimension, too
(cf.\ Lemma~\ref{finite-projective-torsion-cohomology-complex}(c)
and Remark~\ref{ext-tor-comparison}).
 This proves the condition~(i).

 The condition~(ii) is provided by
Lemma~\ref{implicit-in-duality-theorem}(b) applied to
the $I$\+contramodule $R$\+module $\fR$ together with
Lemma~\ref{free-contramodules}(b) applied to a one-element set~$X$.
 Finally, by Proposition~\ref{derived-torsion-compact} it suffices
to prove the condition~(iii) for one of the complexes
$K^\bu=K_\bu^n(R,\s)$, for which it follows from
Lemma~\ref{cech-complex}(c), as the complex
$\Hom_R(K_\bu^n(R,\s),T^\bu(R,\s))$ is quasi-isomorphic to
$T^\bu_n(R,\s)\ot_RC_\s^\bu(R)\sptilde$ and to $T^\bu_n(R,\s)$.

 According to
Lemma~\ref{finite-projective-torsion-cohomology-complex}(a),
the involutive anti-equivalence $\Hom_R({-},R)$ of the category of
finite complexes of finitely generated projective $R$\+modules takes
its full subcategory of complexes with $I$\+torsion cohomology
modules into itself.
 One can say that the choice of the dedualizing complex $B^\bu=
\boR\Gamma_I(R)$ corresponds to the choice of the anti-equivalence
$\boR\Hom_R({-},B^\bu)$ on the subcategory of compact objects in
$\sD(R\modl_{I\tors})$ obtained as the restriction of
the anti-equivalence $\boR\Hom_R({-},R)$ on the category of
compact objects in $\sD(R\modl)$.
\end{ex}

\begin{rem}
 We do not know how the definition of a dedualizing complex considered
in this section relates to the one from
Section~\ref{dedualizing-artinian-quotient}.
 The difference lies in the conditions~(iii).
 One might wish to replace the condition~(iii) in the above definition
with a more conventional finiteness condition by requiring, in
the spirit of the condition~(iii) of
Section~\ref{dedualizing-artinian-quotient}, the cohomology
$R$\+modules of the complex $B^\bu$ to have finitely generated
submodules of elements annihilated by $I^n$ for all (or some) $n\ge1$.
 However, the class of all $I$\+torsion $R$\+modules with finitely
generated submodules of elements annihilated by $I^n$ does not seem
to have good enough homological properties to make such a definition
reasonable even for a Noetherian ring $R$ with a non-Artinian
quotient ring~$R/I$.
 Indeed, the following counterexample demonstrates that this class of
$R$\+modules is \emph{not} closed under the passages to
quotient modules (though it is, of course, preserved by
the passages to arbitrary submodules).

 Let $R=k[x,s]$ be the ring of polynomials in two variables over
a field~$k$ with the ideal $I=(s)$.
 Consider the $I$\+torsion $R$\+module $k[x,s,s^{-1}]/sk[x,s]$ and
the $R$\+submodule $M$ in it with the $k$\+basis consisting of
all the vectors $x^js^{-i}$ with $i\le\nobreak j$.
 Then for any $n\ge1$ the submodule of elements annihilated by~$s^n$
in $M$ is a finitely generated free $k[x]$\+module with the generators
$1$, $xs^{-1}$,~\dots, $x^{n-1}s^{-n+1}$, but the quotient $R$\+module
$M/sM$ of the $R$\+module $M$ is an infinite-dimensional $k$\+vector
space with the basis $x^is^{-i}$, \,$i\ge0$, where both $x$ and~$s$
act by zero.

 Hence even though the argument of
Example~\ref{dedualizing-artinian-quotient-example} still shows that
the derived functor of maximal submodule annihilated by $I^n$ takes
the complex $B^\bu=\boR\Gamma_I(R)$ to a complex with finitely
generated cohomology $R/I^n$\+modules when the ring $R$ is Noetherian,
no analogue of Lemmas~\ref{finitely-cogenerated-complex}
or~\ref{finitely-cogenerated-cofree-complex} seems to be applicable
in this case.
\end{rem}

 The next theorem is the main result of this section and this paper.

\begin{thm} \label{dedualizing-torsion-contra-duality}
 Let $I$ be a weakly proregular finitely generated ideal in
a commutative ring~$R$.
 Then, given a dedualizing complex $B^\bu$ for the ideal $I\subset R$,
for any symbol\/ $\st=\b$, $+$, $-$, $\varnothing$, $\abs+$, $\abs-$,
or\/~$\abs$ there is an equivalence of derived categories\/
$\sD^\st(R\modl_{I\tors})\simeq\sD^\st(R\modl_{I\ctra})$ provided by
mutually inverse functors\/ $\boR\Hom_R(B^\bu,{-})$ and\/
$B^\bu\ot_R^\boL{-}$.
\end{thm}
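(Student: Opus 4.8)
The plan is to follow the template of the proof of Theorem~\ref{dedualizing-bicomodule-duality}, replacing the comodule/contramodule lemmas by their torsion/contramodule counterparts and, crucially, substituting for the missing ``finitely cogenerated cofree resolution of $B^\bu$'' the involutive self-duality on compact objects provided by Theorem~\ref{dedualizing-contravariant-duality}. First I would construct the two functors $G=\boR\Hom_R(B^\bu,{-})\:\sD^\st(R\modl_{I\tors})\rarrow\sD^\st(R\modl_{I\ctra})$ and $F=B^\bu\ot_R^\boL{-}\:\sD^\st(R\modl_{I\ctra})\rarrow\sD^\st(R\modl_{I\tors})$ as derived functors of finite homological dimension, using condition~(i) and the formalism of Appendix~\ref{derived-finite-homol-dim} exactly as in the coalgebra case; here $\Hom_R(B^i,{-})$ lands in $I$\+contramodules because $B^i$ is $I$\+torsion, and $B^i\ot_R{-}$ lands in $I$\+torsion modules. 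The underived tensor-hom adjunction passes to the derived level (as in \cite[Lemma~8.3]{Psemi}), making $F$ left adjoint to $G$ for every symbol~$\st$. It then remains to prove that the unit $\eta\:\Id\rarrow GF$ and the counit $\epsilon\:FG\rarrow\Id$ are isomorphisms. I would establish this first in the unbounded conventional derived categories; by the totalization reductions used in Theorem~\ref{dedualizing-bicomodule-duality} the remaining symbols then follow, as those reductions bring $\eta$ and $\epsilon$ down (within each $\sD^\st$) to quasi-isomorphisms on the free $I$\+contramodules $\fR[[X]]$ and on the injective objects of $R\modl_{I\tors}$ respectively, and these are statements about honest complexes that the unbounded case already supplies.

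For the counit I would work inside $\sD(R\modl_{I\tors})=\sD_{I\tors}(R\modl)$ (Theorem~\ref{torsion-fully-faithful}, Corollary~\ref{complexes-with-torsion-cohomology}), which by Proposition~\ref{derived-torsion-compact} is compactly generated, its compact objects being the finite complexes of finitely generated projective $R$\+modules with $I$\+torsion cohomology. Thus $\epsilon_M$ is an isomorphism as soon as $\boR\Hom_R(K,\epsilon_M)$ is one for every compact~$K$. Since such $K$ is perfect, I can rewrite
$$
 \boR\Hom_R(K,FGM)\simeq\Hom_R(K,B^\bu)\ot_R^\boL GM
 \simeq\boR\Hom_R(\boR\Hom_R(\Hom_R(K,B^\bu),B^\bu),M)
$$
using the projection formula and the tensor-hom adjunction. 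By condition~(iii) the complex $\Hom_R(K,B^\bu)$ is again compact, and by the involution of Theorem~\ref{dedualizing-contravariant-duality} (which is where conditions~(ii) and~(iii) enter) one has $\boR\Hom_R(\Hom_R(K,B^\bu),B^\bu)\simeq K$; tracing the evaluation morphisms identifies the composite with $\boR\Hom_R(K,\epsilon_M)$. Hence $\epsilon$ is an isomorphism and $G$ is fully faithful.

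For the unit I would deduce it from the conservativity of~$F$. The triangle identity together with the invertibility of $\epsilon$ already gives that $F\eta$ is an isomorphism, so it suffices to show $F$ reflects isomorphisms, i.e.\ that $FP=B^\bu\ot_R^\boL P=0$ forces $P=0$ for $P\in\sD(R\modl_{I\ctra})$. Given $FP=0$, the perfectness computation above yields $\Hom_R(K,B^\bu)\ot_R^\boL P=0$ for every compact~$K$; as $\boR\Hom_R({-},B^\bu)$ is an anti-autoequivalence of the compacts, this means $C\ot_R^\boL P=0$ for \emph{every} compact~$C$, in particular for the finite free complexes $\Tel^\bu_n(R,\s)$. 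Passing to the colimit over~$n$ gives $\boR\Gamma_I(P)\simeq C_\s^\bu(R)\sptilde\ot_R^\boL P\simeq\varinjlim_n\Tel^\bu_n(R,\s)\ot_R^\boL P=0$. By Theorem~\ref{complexes-with-t-c-cohomology-equivalence} the triangle $C_\s^\bu(R)\sptilde\ot_R^\boL P\rarrow P\rarrow k_*k^*P$ then places $P$ in the subcategory $k_*\sD(C_\s^\bu(R)\modl)$, which is left orthogonal to $\sD_{I\ctra}(R\modl)$; since $P$ itself lies in $\sD(R\modl_{I\ctra})=\sD_{I\ctra}(R\modl)$ (Corollary~\ref{complexes-with-contramodule-cohomology}), the identity of $P$ factors through a vanishing Hom group and $P=0$. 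Thus $F$ is conservative, $\eta$ is an isomorphism, and $F$, $G$ are mutually inverse equivalences in the unbounded $\sD$, with the other symbols following by the reductions above.

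The main obstacle is precisely the point where this argument departs from the coalgebra one: a general dedualizing complex $B^\bu$ is \emph{not} a compact object of $\sD(R\modl_{I\tors})$ --- its cohomology modules are local cohomology modules and need not be finitely generated --- so there is no direct analogue of resolving $B^\bu$ by finitely cogenerated cofree objects, and both adjunction morphisms must instead be routed through the compact objects, where every complex is perfect. The delicate points to be verified carefully are that the chain of projection-formula and biduality isomorphisms genuinely computes the adjunction morphisms (a naturality and evaluation compatibility, reducing to the canonical biduality of Theorem~\ref{dedualizing-contravariant-duality}), and that the passage from the unbounded conventional derived category to the bounded and absolute ones is handled uniformly by the finite-homological-dimension machinery of Appendix~\ref{derived-finite-homol-dim}.
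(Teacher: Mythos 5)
Your proposal is correct in substance, but it verifies the adjunction isomorphisms by a genuinely different route from the paper's. The construction of the adjoint pair via Appendix~\ref{derived-finite-homol-dim} and the totalization reduction, within each symbol~$\st$, to single-object quasi-isomorphism statements are exactly as in the paper; the divergence is in how those statements are established. The paper works directly at the special objects: the counit at an injective object $J$ of $R\modl_{I\tors}$ is checked through $T_n^\bu(R,\s)\ot_RB^\bu$, a perfect replacement $L^\bu$, Theorem~\ref{dedualizing-contravariant-duality}, and Lemma~\ref{injective-torsion}(b); the unit at $\fR[[X]]$ is checked by an explicit computation with projective systems of complexes and termwise surjective transition maps, where weak proregularity enters through the quasi-isomorphisms \eqref{first-proj-system-quasi-iso}--\eqref{second-proj-system-quasi-iso} and Lemmas~\ref{free-contramodules}(c) and~\ref{telescope-complex}(c). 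You instead test the counit against the compact generators of $\sD(R\modl_{I\tors})=\sD_{I\tors}(R\modl)$ (Proposition~\ref{derived-torsion-compact}) via the projection formula and the biduality of Theorem~\ref{dedualizing-contravariant-duality}, and then obtain the unit formally: the triangle identity makes $F\eta$ invertible, and conservativity of $F$ follows from the semiorthogonal decomposition of Theorem~\ref{complexes-with-t-c-cohomology-equivalence} exactly as you describe. Your route buys a complete bypass of the paper's most delicate passage, the projective-limit manipulations at $\fR[[X]]$, at the price of reimporting Section~\ref{duality-theorem-secn} (harmless: that section precedes this one, needs no weak proregularity, and creates no circularity, though it slightly dilutes the paper's stated aim of giving a second, independent proof of the corollary of Section~\ref{duality-theorem-secn}) and of one compatibility that you leave implicit but should state as a lemma: under the fully faithful embeddings of Theorems~\ref{torsion-fully-faithful} and~\ref{contra-fully-faithful}, the Appendix~\ref{derived-finite-homol-dim} functors $G$ and $F$ agree with the ambient derived functors $\boR\Hom_R(B^\bu,{-})$ and $B^\bu\ot_R^\boL{-}$ computed in $\sD(R\modl)$ --- without this, the rewriting $\boR\Hom_R(K,FGM)\simeq\Hom_R(K,B^\bu)\ot_R^\boL GM$ and the step $\boR\Hom_R(K,FP)\simeq\Hom_R(K,B^\bu)\ot_R^\boL P$ in the conservativity argument are not even meaningful. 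The agreement does hold: on single objects it amounts precisely to $\Ext_R^{>0}(B^i,H)=0$ for injective $I$\+torsion $H$ and $\Tor_{>0}^R(B^i,\fR[[X]])=0$, i.~e., Lemma~\ref{injective-torsion} and Remark~\ref{ext-tor-comparison}, and it propagates to unbounded complexes by the finite cohomological amplitude of both sides (condition~(i)); together with the evaluation-compatibility diagram chase you already flag, this fills the only gaps, and your accurate observation that $B^\bu$ itself is not compact, so both adjunction morphisms must be routed through the compact objects, is indeed the structural reason the coalgebra-style cofree-resolution argument of Theorem~\ref{dedualizing-bicomodule-duality} cannot be copied here.
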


\begin{proof}
 As in the proof of Theorem~\ref{dedualizing-artinian-quotient-duality},
we assume for simplicity of notation that the complex $B^\bu$ is
concentrated in nonpositive cohomological degrees.
 Let $d$~be an integer greater or equal to both the projective and
the contraflat dimension of $B^\bu$ as a complex of $I$\+torsion
$R$\+modules.

 To construct the image of a complex of $I$\+torsion $R$\+modules
$M^\bu$ under the functor $\boR\Hom_R(B^\bu,{-})$, choose an exact
sequence of complexes of $I$\+torsion $R$\+modules $0\rarrow M^\bu
\rarrow J^{0,\bu}\rarrow J^{1,\bu}\rarrow\dotsb$ with injective
$I$\+torsion $R$\+modules $J^{j,i}$ (i.~e., the $R$\+modules $J^{j,i}$
should be injective objects in the abelian category $R\modl_{I\tors}$).
 Applying the functor $\Hom_R(B^\bu,{-})$ to every complex $0\rarrow
J^{0,i}\rarrow J^{1,i}\rarrow J^{2,i}\rarrow\dotsb$ with $i\in\Z$,
one obtains a nonnegatively graded complex of $I$\+contramodule
$R$\+modules $0\rarrow P^{0,i}\rarrow P^{1,i}\rarrow P^{2,i}
\rarrow\dotsb$.
 According to the projective dimension condition on the dedualizing
complex $B^\bu$, the complex $P^{\bu,i}$ has zero cohomology modules
at the cohomological degrees above~$d$; so it quasi-isomorphic to
its canonical truncation complex $\tau_{\le d}P^{\bu,i}$.
 By the definition, set the object $\boR\Hom_R(B^\bu,M^\bu)$ in
the derived category $\sD^\st(R\modl_{I\ctra})$ be represented by
the total complex of the bicomplex $\tau_{\le d}P^{\bu,\bu}$
concentrated in the cohomological degrees $0\le j\le d$ and $i\in\Z$
(or, when the symbol~$\st$ presumes bounded complexes, the bicomplex
$P^{\bu,\bu}$ is bounded in the respective sense along the grading~$i$).

 Similarly, to construct the image of a complex of $I$\+contramodule
$R$\+modules $P$ under the functor $B^\bu\ot_R^\boL{-}$, one has to
choose an exact sequence of complexes of $I$\+contramodule $R$\+modules
$\dotsb\rarrow F^{-1,\bu}\rarrow F^{0,\bu}\rarrow P^\bu\rarrow0$ with
projective $I$\+contramodule $R$\+modules~$F^{j,i}$.
 Then one applies the functor $B^\bu\ot_R{-}$ to every complex
$\dotsb\rarrow F^{-2,i}\rarrow F^{-1,i}\rarrow F^{0,i}\rarrow 0$ with
$i\in\Z$, obtaining a nonpositively graded complex of $I$\+torsion
$R$\+modules $\dotsb\rarrow M^{-2,i}\rarrow M^{-1,i}\rarrow M^{0,i}
\rarrow0$.
 According to the contraflat dimension condition on the complex $B^\bu$,
the complex $M^{\bu,i}$ has zero cohomology modules at the cohomological
degrees below~$-d$; so it is quasi-isomorphic to its canonical
truncation complex $\tau_{\ge-d}M^{\bu,i}$.
 One sets the object $B^\bu\ot_R^\boL\nobreak P^\bu$ in the derived
category $\sD^\st(R\modl_{I\tors})$ to be represented by the total complex
of the bicomplex $\tau_{\ge-d}(M^{\bu,\bu})$ concentrated in
the cohomological degrees $-d\le j\le 0$ and $i\in\Z$.

 As in the proof of Theorem~\ref{dedualizing-artinian-quotient-duality},
these constructions of two derived functors are but particular cases
of the construction of a derived functor of finite homological dimension
elaborated in Appendix~\ref{derived-finite-homol-dim}.
 According to the results of Appendix~\ref{derived-finite-homol-dim},
the above constructions produce well-defined triangulated functors
$$
 \boR\Hom_R(B^\bu,{-})\:\sD^\st(R\modl_{I\tors})\lrarrow
 \sD^\st(R\modl_{I\ctra})
$$
and
$$
 B^\bu\ot_R^\boL{-}\:\sD^\st(R\modl_{I\ctra})\lrarrow
 \sD^\st(R\modl_{I\tors})
$$
for any derived category symbol $\st=\b$, $+$, $-$, $\varnothing$,
$\abs+$, $\abs-$, or~$\abs$.
 Moreover, the former functor is right adjoint to the latter one.
 All these assertions only depend on the condition~(i) in the definition
of a dedualizing complex.

 It remains to prove that the adjunction morphisms are isomorphisms.
 Since the total complexes of finite acyclic complexes of complexes
are absolutely acyclic, in order to check that the morphism
$B^\bu\ot_R^\boL\boR\Hom_R(B^\bu,M^\bu)\rarrow M^\bu$ is an isomorphism
in the derived category $\sD^\st(R\modl_{I\tors})$ for all
the $\st$\+bounded complexes of $I$\+torsion $R$\+modules $M^\bu$
it suffices to consider the case of a one-term complex $M^\bu=M$
corresponding to a single $I$\+torsion $R$\+module~$M$.
 Furthermore, since a morphism in $\sD^\b(R\modl_{I\tors})$ is
an isomorphism whenever it is an isomorphism in $\sD^+(R\modl_{I\tors})$,
one can view the one-term complex $M$ as an object of the derived
category $\sD^+(R\modl_{I\tors})$ and replace it with an injective
resolution $J^\bu$ of the $R$\+module $M$ in the abelian category
$R\modl_{I\tors}$.
 Applying the same totalization argument to the complex $J^\bu$,
the question reduces to proving that the adjunction morphism
$B^\bu\ot_R^\boL\boR\Hom_R(B^\bu,J)\rarrow J$ is an isomorphism in
$\sD^\b(R\modl_{I\tors})$ for any injective $I$\+torsion
$R$\+module~$J$.

 One has $\boR\Hom_R(B^\bu,J)=\Hom_R(B^\bu,J)$.
 Let $P^\bu$ be a bounded above complex of projective $I$\+contramodule
$R$\+modules endowed with a quasi-isomorphism of complexes of
$I$\+contramodule $R$\+modules $P^\bu\rarrow\Hom_R(B^\bu,J)$, so
$B^\bu\ot_R^\boL\Hom_R(B^\bu,J)=B^\bu\ot_R P^\bu$.
 We have to show that the natural morphism $B^\bu\ot_RP^\bu\rarrow J$
is a quasi-isomorphism of complexes of $I$\+torsion $R$\+modules.
 Let $\s$ be a finite sequence of elements generating the ideal
$I\subset R$.
 By Lemma~\ref{cech-complex}(c), it suffices to check that
the induced morphism of complexes
$$
 T_n^\bu(R,\s)\ot_RB^\bu\ot_RP^\bu\lrarrow T_n^\bu(R,\s)\ot_R J
$$
is a quasi-isomorphism for every $n\ge1$ (as the complex of flat
$R$\+modules $C_\s^\bu(R)\sptilde$ is quasi-isomorphic to the complex
of free $R$\+modules $T^\bu(R,\s)$, which is the inductive limit of
complexes of finitely generated free $R$\+modules $T_n^\bu(R,\s)$).

 By the condition~(iii) together with
Lemma~\ref{finite-projective-torsion-cohomology-complex}(a) and
Proposition~\ref{derived-torsion-compact}, there exists a finite
complex of finitely generated projective $R$\+modules $L^\bu$
together with a quasi-isomorphism of complexes of $R$\+modules
$L^\bu\rarrow T_n^\bu(R,\s)\ot_RB^\bu$.
 By Lemma~\ref{finite-projective-torsion-cohomology-complex}(c),
the induced morphism of complexes of $R$\+modules
$$
 L^\bu\ot_RP^\bu\lrarrow T_n^\bu(R,\s)\ot_R B^\bu\ot_R P^\bu
$$
is a quasi-isomorphism.
 So is the morphism of complexes of $R$\+modules
$L^\bu\ot_R P^\bu\rarrow L^\bu\ot_R \Hom_R(B^\bu,J)$.
 Hence it remains to check that the morphism
$$
 L^\bu\ot_R\Hom_R(B^\bu,J)\lrarrow T_n^\bu(R,\s)\ot_R J
$$
induced by the morphism $L^\bu\rarrow T_n^\bu(R,\s)\ot_R B^\bu$
is a quasi-isomorphism.

 One has $L^\bu\ot_R\Hom_R(B^\bu,J)\simeq\Hom_R(\Hom_R(L^\bu,B^\bu),J)$.
 By Theorem~\ref{dedualizing-contravariant-duality}, the natural
morphism of complexes of $R$\+modules
$$
 K^n_\bu(R,\s) = \Hom_R(T_n^\bu(R,\s),R)\lrarrow\Hom_R(L^\bu,B^\bu)
$$
is a quasi-isomorphism.
 Applying Lemma~\ref{injective-torsion}(b), we conclude that
the induced morphism of complexes
$$
 \Hom_R(\Hom_R(L^\bu,B^\bu),J)\lrarrow\Hom_R(K_\bu^n(R,\s),J)
 \.\simeq\. T_n^\bu(R,\s)\ot_R J
$$
is a quasi-isomorphism, too.

 Similarly, in order to prove that the adjunction morphism
$P^\bu\rarrow\boR\Hom_R(B^\bu\;\allowbreak B^\bu\ot_R^\boL P^\bu)$
is an isomorphism in the derived category $\sD^\st(R\modl_{I\ctra})$
for any $\st$\+bounded complex of $I$\+contramodule $R$\+modules
$P^\bu$, it suffices to check that it is an isomorphism in
$\sD^\b(R\modl_{I\ctra})$ for any free $I$\+contramodule $R$\+module
$F=\fR[[X]]$ viewed as a one-term complex in $\sD^\b(R\modl_{I\ctra})$.
 One has $B^\bu\ot_R^\boL\fR[[X]]\simeq B^\bu\ot_R R[X]=B^\bu[X]$.
 Let $B^\bu\rarrow E^\bu$ and $B^\bu[X]\rarrow E^\bu_X$ be
quasi-isomorphisms from the complexes $B^\bu$ and $B^\bu[X]$ to
bounded below complexes of injective objects in $R\modl_{I\tors}$.
 We have to show that the natural morphism $\fR[[X]]\rarrow
\Hom_R(B^\bu,E_X^\bu)$ is a quasi-isomorphism of complexes of
$R$\+modules.

 By Lemma~\ref{cech-complex}(c), the morphism of complexes
of $I$\+torsion $R$\+modules $T^\bu(R,\s)\ot_RB^\bu\rarrow B^\bu$
is a quasi-isomorphism.
 Hence the induced morphism
\begin{equation} \label{telescope-insertion-morphism}
 \Hom_R(B^\bu,E_X^\bu)\lrarrow\Hom_R(T^\bu(R,\s)\ot_RB^\bu\;E_X^\bu)
\end{equation}
is a quasi-isomorphism, too.
 The complex in the target of this morphism is
the projective limit of the projective system of complexes
$\Hom_R(T_n^\bu(R,\s)\ot_R B^\bu\; E_X^\bu)$ and termwise
surjective morphisms between them.
 By the condition~(iii), every complex $T_n^\bu(R,\s)\ot_RB^\bu$
is a compact object of the derived category $\sD(R\modl_{I\tors})$.
 Hence the $X$\+indexed family of morphisms of complexes
$E^\bu\rarrow E_X^\bu$ corresponding to the embeddings
$B^\bu\rarrow B^\bu[X]$ induces a quasi-isomorphism
\begin{equation} \label{first-proj-system-quasi-iso}
 \Hom_R(T_n^\bu(R,\s)\ot_R B^\bu\;E^\bu)[X]\lrarrow
 \Hom_R(T_n^\bu(R,\s)\ot_R B^\bu\;E^\bu_X).
\end{equation}

 Furthermore, one has
$$
 \Hom_R(T_n^\bu(R,\s)\ot_R B^\bu\;E^\bu)
 \.\simeq\.\Hom_R(T_n^\bu(R,\s),\Hom_R(B^\bu,E^\bu)).
$$
 By the condition~(ii), the natural morphism $\fR\rarrow
\Hom_R(B^\bu,E^\bu)$ is a quasi-isomorphism, hence so is
the induced morphism
$$
 \Hom_R(T_n^\bu(R,\s),\fR)\lrarrow
 \Hom_R(T_n^\bu(R,\s),\Hom_R(B^\bu,E^\bu)).
$$
 By Lemma~\ref{finite-projective-torsion-cohomology-complex}(a\+b),
the morphism of complexes
$$
 \Hom_R(T_n^\bu(R,\s),R)\rarrow\Hom_R(T_n^\bu(R,\s),\fR)
$$
induced by the completion morphism $R\rarrow\fR$ is
a quasi-isomorphism, too.
 Passing to the composition and taking the direct sum over~$X$,
we obtain a quasi-isomorphism of complexes of $R$\+modules
\begin{equation} \label{second-proj-system-quasi-iso}
 \Hom_R(T_n^\bu(R,\s),R[X])\lrarrow
 \Hom_R(T_n^\bu(R,\s)\ot_R B^\bu\;E^\bu)[X]
\end{equation}
for every $n\ge1$.

 Both sides of the quasi-isomorphisms
\eqref{first-proj-system-quasi-iso}
and~\eqref{second-proj-system-quasi-iso} naturally form projective
systems of complexes and termwise surjective morphisms between them,
and the quasi-isomorphisms form commutative diagrams with
the morphisms in the projective systems.
 Therefore, the induced morphisms between the projective limits are
also quasi-isomorphisms.
 Furthermore, the complexes $\Hom_R(T_n^\bu(R,\s),R[X])$ have
$I$\+torsion cohomology modules, so the natural morphism
$R[X]\rarrow\Hom_R(T_n^\bu(R,\s),R[X])$ extends naturally to
a cohomology morphism
$$
 \fR[[X]]\lrarrow H^0\Hom_R(T_n^\bu(R,\s),R[X]).
$$
 Due to the weak proregularity condition, after the passage to
the projective limit this provides a cohomology morphism 
$$\textstyle
 \fR[[X]]\lrarrow H^0\varprojlim_n\Hom_R(T_n^\bu(R,\s),R[X])
$$
forming a commutative diagram with the cohomology morphisms
induced by the morphism of complexes $\fR[[X]]\rarrow
\Hom_R(B^\bu,E_X^\bu)$, the morphism of
complexes~\eqref{telescope-insertion-morphism}, and
the projective limits of the morphisms of
complexes~(\ref{first-proj-system-quasi-iso}\+-%
\ref{second-proj-system-quasi-iso}).

 Finally, by Lemma~\ref{free-contramodules}(c) the morphism
of complexes
$$\textstyle
 \varprojlim_n\Hom_R(T_n^\bu(R,\s),R[X])\.\simeq\.
 \Hom_R(T^\bu(R,\s),R[X])\lrarrow\Hom_R(T^\bu(R,\s),\fR[[X]])
$$
induced by the completion morphism $R[X]\rarrow\fR[[X]]$ is
a quasi-isomorphism, while by Lemma~\ref{telescope-complex}(c)
the morphism
$$
 \fR[[X]]\lrarrow\Hom_R(T^\bu(R,\s),\.\fR[[X]])
$$
is a quasi-isomorphism.
 Comparing these observations finishes the proof.
\end{proof}

\appendix

\Section{Exotic Derived Categories}  \label{exotic-derived}

 This appendix, consisting almost entirely of the definitions,
is included for the benefit of the reader who may have no other
sources handy when looking into this paper.
 Detailed expositions can be found in~\cite{Psemi,Pkoszul,EP}; and
the most relevant source for our present purposes
is~\cite[Appendix~A]{Pcosh}.

 Given an additive category $\sA$, we denote by $\Hot(\sA)$
the homotopy category of complexes over $\sA$, by $\Hot^+(\sA)$ its
full subcategory of bounded below complexes, by $\Hot^-(\sA)\subset
\Hot(\sA)$ the full subcategory of bounded above complexes, and
by $\Hot^\b(\sA)$ the full subcategory of complexes bounded on
both sides.

 For a general discussion of exact categories, we refer to
the overview paper~\cite{Bueh}.
 A sequence of objects and morphisms $E\rarrow F\rarrow G$ in
an exact category $\sE$ is said to be \emph{exact} (at the term $F$)
if it is composed of an admissible epimorphism $E\rarrow S$, a short
exact sequence $0\rarrow S\rarrow F\rarrow T\rarrow0$, and an admissible
monomorphism $T\rarrow G$.
 A complex $E^\bu$ in an exact category $\sE$ is said to be
\emph{acyclic} if it is homotopy equivalent to an exact complex,
or equivalently, if it is a direct summand of an exact complex.
 For any symbol $\st=\b$, $+$, $-$, or~$\varnothing$, the quotient
category of the homotopy category $\Hot^\st(\sE)$ by its thick
subcategory of acyclic complexes is called the (conventional)
\emph{derived category} of $\sE$ and denoted by $\sD^\st(\sE)$.

 The exotic derived category symbols are $\st=\abs+$, $\abs-$,
$\co$, $\ctr$, or~$\abs$.
 The related exotic derived category $\sD^\st(\sE)$ is defined as
a certain quotient category of the corresponding homotopy category,
which is $\Hot^+(\sE)$ when $\st=\abs+$, or $\Hot^-(\sE)$ when
$\st=\abs-$, or $\Hot(\sE)$ in the remaining cases $\st=\co$,
$\ctr$, or~$\abs$.
 It is sometimes convenient to denote simply by $\Hot^\st(\sE)$
the (respectively bounded or unbounded) homotopy category
corresponding to an exotic symbol~$\st$.

 A short exact sequence of complexes in an exact category $\sE$ can be
viewed as a bicomplex with three rows.
 As such, it has a total complex.
 A complex in an exact category $\sE$ is called \emph {absolutely
acyclic} if it belongs to the minimal thick subcategory of
the homotopy category $\Hot(\sE)$ containing the totalizations of
all the short exact sequences of complexes in~$\sE$.
 The quotient category of the homotopy category $\Hot(\sE)$ by its
thick subcategory of absolutely acyclic complexes is denoted by
$\sD^\abs(\sE)$ and called the \emph{absolute derived category} of
an exact category~$\sE$.

 The definitions of the absolute derived categories $\sD^{\abs+}(\sE)$
and $\sD^{\abs-}(\sE)$ of bounded above or below complexes in
an exact category $\sE$ are similar.
 A $\st$\+bounded complex in $\sE$ is said to be absolutely acyclic
with respect to the class of $\st$\+bounded complexes if it belongs to
the minimal thick subcategory of the homotopy category $\Hot^\st(\sE)$
containing the totalizations of all the short exact sequences of
$\st$\+bounded complexes in~$\sE$.
 Given a symbol $\st=\abs+$ or~$\abs-$, the quotient category of
the homotopy category $\Hot^\st(\sE)$ by its thick subcategory of
complexes absolutely acyclic with respect to the class of
$\st$\+bounded complexes is denoted by $\sD^\st(\sE)$.

 Any bounded acyclic complex is absolutely acyclic (with respect to
the class of bounded complexes).
 Any bounded above complex that is absolutely acyclic is also
absolutely acyclic with respect to the class of bounded above
complexes; and any bounded below complex that is absolutely acyclic
is also absolutely acyclic with respect to the class of bounded
below complexes~\cite[Lemma~A.1.1]{Pcosh}.

 Assume that arbitrary infinite direct sums exist in the exact
category $\sE$ and the class of short exact sequences is closed under
infinite direct sums.
 Then a complex in the category $\sE$ is called \emph{coacyclic} if
it belongs to the minimal triangulated subcategory of $\Hot(\sE)$
containing the absolutely acyclic complexes and closed under
infinite direct sums.
 The quotient category of the homotopy category $\Hot(\sE)$ by its
thick subcategory of coacyclic complexes is denoted by $\sD^\co(\sE)$
and called the \emph{coderived category} of the exact category~$\sE$.

 Similarly, assume that arbitrary infinite products exist in the exact
category $\sE$ and the class of short exact sequences is closed under
infinite products.
 Then a complex in the category $\sE$ is called \emph{contraacyclic} if
it belongs to the minimal triangulated subcategory of $\Hot(\sE)$
containing the absolutely acyclic complexes and closed under infinite
products.
 The quotient category of the homotopy category $\Hot(\sE)$ by its
thick subcategory of contraacyclic complexes is denoted by
$\sD^\ctr(\sE)$ and called the \emph{contraderived category} of
the exact category~$\sE$.

 Any coacyclic complex in an exact category with exact functors of
infinite direct sum is acyclic, and any contraacyclic complex in
an exact category with exact functors of infinite product is acyclic,
but the converse is not generally true~\cite[Examples~3.3]{Pkoszul}.
 So the coderived and the contraderived categories are ``larger''
than the conventional unbounded derived category; and the absolute
derived category is larger still.

 Any bounded below acyclic complex in an exact category is
coacyclic~\cite[Lemma~2.1]{Psemi}.
 Any bounded above acyclic complex in an exact category is
contraacyclic~\cite[Lemma~4.1]{Psemi}.
 Any acyclic complex in an exact category of finite homological
dimension is absolutely acyclic~\cite[Remark~2.1]{Psemi}.

 The conventional derived categories (with the symbols $+$, $-$,
or~$\varnothing$) are also known as \emph{derived categories of
the first kind}.
 The coderived, contraderived, and absolute derived categories
(with the symbols $\abs+$, $\abs-$, $\co$, $\ctr$, or~$\abs$)
are known as \emph{derived categories of the second kind}.
 The bounded derived category $\sD^\b(\sE)$ of an exact category $\sE$
can be thought of as belonging to both classes at the same time.

\Section{Derived Functors of Finite Homological Dimension}
\label{derived-finite-homol-dim}

 The constructions of derived functors of finite homological dimension
between various derived categories $\sD^\st$ in
Sections~\ref{derived-torsion-modules}\+-\ref{derived-contramodules}
are based on the technique of~\cite[Section~A.5]{Pcosh}, which is
not sufficient for the purposes of
Sections~\ref{dedualizing-artinian-quotient}\+-%
\ref{dedualizing-weakly-proregular}.
 The more sophisticated technique required there is developed in
this appendix.

 For any additive category $\sE$, we denote by $\sC^+(\sE)$
the DG\+category of bounded below complexes in~$\sE$.
 The additive category of bounded below complexes in $\sE$ and closed
morphisms of degree zero between them will be denoted by the same
symbol $\sC^+(\sE)$; it will be clear from the context which category
structure on $\sC^+(\sE)$ is presumed.
 When $\sE$ is an exact category, the full subcategory $\sC^{\ge0}(\sE)
\subset\sC^+(\sE)$ of all the complexes $0\rarrow E^0\rarrow E^1\rarrow
E^2\rarrow\dotsb$ in $\sE$ (and closed morphisms of degree zero between
them) has a natural exact category structure where a short sequence
of complexes is exact if and only if it is termwise exact in~$\sE$.

 Let $\sA$ be an exact category and $\sJ\subset\sA$ be a full
subcategory, closed under extensions and the passages to the cokernels
of admissible monomorphisms in $\sA$ and such that any object of $\sA$
is the source of an admissible monomorphism into an object of~$\sJ$.
 Being closed under extensions, the full subcategory $\sJ$ inherits
the exact category structure of the ambient category~$\sA$.
 Notice that a closed morphism in $\sC^+(\sJ)$ is a quasi-isomorphism
of complexes in the exact category $\sJ$ if and only if it is
a quasi-isomorphism of complexes in~$\sA$.

 Let $\sC_\sA^{\ge0}(\sJ)$ denote the full subcategory in $\sC^{\ge0}(\sJ)$
consisting of all the complexes $0\rarrow J^0\rarrow J^1\rarrow\dotsb$
in $\sJ$ for which there exists an object $A\in\sA$ together with
a morphism $A\rarrow J^0$ such that the sequence $0\rarrow A\rarrow
J^0\rarrow J^1\rarrow\dotsb$ is exact in the category~$\sA$.
 By the definition, one has $\sC_\sA^{\ge0}(\sJ)=\sC^{\ge0}(\sJ)\cap
\sC_\sA^{\ge0}(\sA)\subset\sC^{\ge0}(\sA)$.
 The full subcategory $\sC_\sA^{\ge0}(\sJ)$ is closed under extensions
and the passages to the cokernels of admissible monomorphisms in
$\sC^{\ge0}(\sJ)$, so it inherits the exact category structure.

 Let $\sB$ be another exact category; suppose that it contains
the images of idempotent endomorphisms of its objects.
 Let $d\ge0$ be an integer.
 Denote by $\sC^{\ge0}(\sB)^{\le d}\subset\sC^{\ge0}(\sB)$ the full
subcategory consisting of all the complexes $0\rarrow B^0\rarrow B^1
\rarrow\dotsb$ in $\sB$ such that the sequence $B^d\rarrow B^{d+1}
\rarrow B^{d+2}\rarrow\dotsb$ is exact in~$\sB$.
 In particular, one has $\sC^{\ge0}(\sB)^{\le0}=\sC_\sB^{\ge0}(\sB)$.
 Being closed under extensions and the passages to the cokernels
of admissible monomorphisms, the full subcategory
$\sC^{\ge0}(\sB)^{\le d}$ inherits the exact category structure of
the category $\sC^{\ge0}(\sB)$.
 Denote also by $\sC^{[0,d]}(\sB)$ the exact category of finite
complexes $B^0\rarrow\dotsb\rarrow B^d$ in~$\sB$.
 Then there is an exact functor of canonical truncation
$\tau_{\le d}\:\sC^{\ge0}(\sB)^{\le d}\rarrow\sC^{[0,d]}(\sB)$.

 Suppose that we are given a DG\+functor $\Psi\:\sC^+(\sJ)\rarrow
\sC^+(\sB)$ taking quasi-isomorphisms of complexes belonging to
$\sC_\sA^{\ge0}(\sJ)$ to quasi-isomorphisms of complexes in
the exact category~$\sB$.
 Suppose further that the restriction of the functor $\Psi$ to
the subcategory $\sC_\sA^{\ge0}(\sJ)\subset\sC^+(\sJ)$ is
an exact functor between exact categories $\Psi\:\sC^{\ge0}_\sA(\sJ)
\rarrow\sC^{\ge0}(\sB)^{\le d}$.
 Composing the restriction of the functor $\Psi$ with the functor
of canonical truncation, we obtain an exact functor
$$
 \tau_{\le d}\Psi\:\sC^{\ge0}_\sA(\sJ)\lrarrow\sC^{[0,d]}(\sB).
$$
 Our aim is to construct the right derived functor
$$
 \boR\Psi\:\sD^\st(\sA)\lrarrow\sD^\st(\sB)
$$
acting between any bounded or unbounded, conventional or absolute
derived categories $\sD^\st$ with the symbol $\st=\b$, $+$, $-$,
$\varnothing$, $\abs+$, $\abs-$, or~$\abs$
(see Appendix~\ref{exotic-derived} or~\cite[Section~A.1]{Pcosh} for
the definitions and~\cite[Section~3]{Pkoszul} for a discussion).

 When the exact categories $\sA$ and $\sB$ have exact functors of
infinite direct sum, the full subcategory $\sJ\subset\sA$ is closed
under infinite direct sums, and the functor $\Psi$ preserves infinite
direct sums, there will be also the derived functor $\boR\Psi$
acting between the coderived categories $\sD^\co$ of the exact
categories $\sA$ and~$\sB$.
 Similarly, when the exact categories $\sB$ and $\sA$ have exact
functors of infinite product, the full subcategory $\sJ\subset\sA$
is closed under infinite products, and the functor $\Psi$ preserves
infinite products, we will also have the derived functor $\boR\Psi$
acting between the contraderived categories $\sD^\ctr$ of the exact
categories $\sA$ and~$\sB$.

 We start with a couple of somewhat tedious but rather
straightforward lemmas.

\begin{lem} \label{object-resolutions}
\textup{(a)} For any object $E$ in the category\/ $\sA$ there exists
an exact sequence\/ $0\rarrow E\rarrow J^0\rarrow J^1\rarrow J^2
\rarrow\dotsb$ in the category\/ $\sA$ with the objects $J^j$ belonging
to the full subcategory\/ $\sJ\subset\sA$. \par
\textup{(b)} For any two morphisms $E\rarrow G$ and $F\rarrow G$ and
two exact sequences\/ $0\rarrow E\rarrow K^0\rarrow K^1\rarrow\dotsb$
and\/ $0\rarrow F\rarrow L^0\rarrow L^1\rarrow\dotsb$ in the category\/
$\sA$ there exists an exact sequence\/ $0\rarrow G\rarrow J^0\rarrow
J^1\rarrow\dotsb$ in the category\/ $\sA$ together with two morphisms
of complexes $K^\bu\rarrow J^\bu$ and $L^\bu\rarrow J^\bu$ forming
a commutative diagram with the morphisms $E\rarrow G$, \ $F\rarrow G$,
\ $E\rarrow K^\bu$, and $F\rarrow L^\bu$ and such that the objects $J^j$
belong to the full subcategory\/ $\sJ\subset\sA$. \par
\textup{(c)} For any exact sequence\/ $0\rarrow E\rarrow K^0\rarrow K^1
\rarrow\dotsb$ and any complex\/ $0\rarrow L^0\rarrow L^1\rarrow L^2
\rarrow\dotsb$ in the category\/ $\sA$, and for any morphism of
complexes $K^\bu\rarrow L^\bu$ with a vanishing composition $E\rarrow
K^\bu\rarrow L^\bu$, there exists a complex\/ $0\rarrow J^0\rarrow J^1
\rarrow J^2\rarrow\dotsb$ in the category\/ $\sJ$ together with
a quasi-isomorphism $L^\bu\rarrow J^\bu$ of complexes in the category\/
$\sA$ such that the composition $K^\bu\rarrow L^\bu\rarrow J^\bu$ is
a contractible morphism of complexes. \par
\textup{(d)} For any short exact sequence\/ $0\rarrow E^1\rarrow E^2
\rarrow E^3\rarrow0$ in the category\/ $\sA$ there exists
an exact sequence of short exact sequences\/ $0\rarrow E^\bu\rarrow
J^{\bu,0}\rarrow J^{\bu,1}\rarrow J^{\bu,2}\rarrow\dotsb$ in
the category\/ $\sA$ with the objects $J^{k,j}$ belonging to
the full subcategory\/ $\sJ\subset\sA$.
\end{lem}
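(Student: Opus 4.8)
The plan is to derive all four statements from the single hypothesis that every object of $\sA$ admits an admissible monomorphism into an object of $\sJ$, combined with two standard facts about exact categories: pushouts along admissible monomorphisms exist and preserve admissibility, and the $3\times3$ (nine) lemma holds~\cite{Bueh}. Part~(a) is the base case and needs nothing more: given $E$, choose an admissible monomorphism $E\rarrow J^0$ into some $J^0\in\sJ$, embed its cokernel into $J^1\in\sJ$, and iterate; the sequence $0\rarrow E\rarrow J^0\rarrow J^1\rarrow\dotsb$ is exact by construction.

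For part~(b) I would build $J^\bu$ together with the two chain maps by induction on the degree. At the inductive step I have the running cokernels $C_E$, $C_F$, $C_G$ of the three coaugmented complexes, admissible monomorphisms $C_E\rarrow K^{n+1}$ and $C_F\rarrow L^{n+1}$, and the induced maps $C_E\rarrow C_G$ and $C_F\rarrow C_G$. I first form the pushout of the span $K^{n+1}\larrow C_E\rarrow C_G$, then the pushout of the result along $C_F\rarrow L^{n+1}$, and finally embed the outcome into an object $J^{n+1}\in\sJ$ by an admissible monomorphism. Because one leg of each pushout is an admissible monomorphism, the composite $C_G\rarrow J^{n+1}$ is again admissible, so $0\rarrow G\rarrow J^\bu$ stays exact; and the two squares of chain maps commute automatically by the universal property of the pushouts.

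Part~(d) I would reduce to the one-step assertion that a short exact sequence $0\rarrow C^1\rarrow C^2\rarrow C^3\rarrow0$ admits a termwise admissible monomorphism into a short exact sequence of objects of $\sJ$; iterating on the cokernels and invoking the nine lemma then assembles the full resolution. The classical horseshoe is unavailable here since objects of $\sJ$ need not be injective, but the one-step assertion can be arranged directly: choose admissible monomorphisms $\beta\:C^2\rarrow J^2$ and $\gamma\:C^3\rarrow J^3$ into $\sJ$, and take the bottom row to be the split sequence $0\rarrow J^2\rarrow J^2\oplus J^3\rarrow J^3\rarrow0$, whose middle term lies in $\sJ$ as an extension. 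The column maps $\beta i\:C^1\rarrow J^2$, \ $(\beta,\gamma p)\:C^2\rarrow J^2\oplus J^3$, and $\gamma\:C^3\rarrow J^3$ are then admissible monomorphisms, and the squares commute because the composite $C^1\rarrow C^2\rarrow C^3$ vanishes.

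The step I expect to be the main obstacle is part~(c), where a nullhomotopy must be produced even though objects of $\sJ$ are not injective. The conceptual input is that $\phi$ is already zero in the derived category: the coaugmentation $E[0]\rarrow K^\bu$ is a quasi-isomorphism (as $0\rarrow E\rarrow K^\bu$ is exact), and its composite with $\phi$ is the zero chain map by the vanishing hypothesis, so $\phi=0$ in $\sD(\sA)$. By the calculus of fractions for the Verdier quotient $\sD(\sA)=\Hot(\sA)[\mathrm{qis}^{-1}]$ there is then a quasi-isomorphism $t\:L^\bu\rarrow\tilde L^\bu$ with $t\phi$ null-homotopic; composing $t$ with a termwise\+$\sJ$ coresolution of $\tilde L^\bu$ built from parts~(a), (b), (d), and finally applying the canonical truncation $\tau_{\ge0}$ (which stays in $\sJ$ because $\sJ$ is closed under cokernels of admissible monomorphisms), yields the desired $\lambda\:L^\bu\rarrow J^\bu$ with $\lambda\phi$ null-homotopic. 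The delicate points are that $\sA$ is merely exact, so one must reason with acyclic cones rather than with cohomology objects throughout, and that the nullhomotopy cannot be extracted from any single resolution but must be imported through the localization before resolving; an alternative is a more laborious simultaneous induction constructing $J^\bu$ and the contracting homotopy degree by degree, using exactness of the coaugmented complex $K^\bu$ to solve the successive extension problems.
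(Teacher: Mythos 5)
Your parts~(a), (b), and~(d) are correct and coincide with the paper's own arguments: (a)~is the same iteration of the embedding hypothesis; your two successive pushouts in~(b) compute exactly the paper's single fibered coproduct of $G$ and $K^0\oplus L^0$ over $E\oplus F$, after which both proofs proceed identically on the running cokernels; and your one-step split-sequence construction in~(d), with the column maps $\beta i$, $(\beta,\gamma p)$, $\gamma$ into $0\rarrow J^2\rarrow J^2\oplus J^3\rarrow J^3\rarrow0$, fed into part~(a) applied to the exact category of short exact sequences, is verbatim the paper's proof.

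Part~(c), however, has a genuine gap in the route you designate as primary. Canonical truncation $\tau_{\ge0}$ is simply not available in a general exact category: the image of the differential $J'^{-1}\rarrow J'^0$ need not be an admissible subobject, and since exact categories have no cohomology objects, being quasi-isomorphic to a complex concentrated in degrees~$\ge0$ does not entitle you to a truncation that is again quasi-isomorphic to the original complex. Worse, your parenthetical justification that the truncation ``stays in $\sJ$'' misreads the closure hypothesis: $\sJ$ is closed under cokernels of admissible monomorphisms between its own objects (for $\sJ$ the injectives of an abelian category such monomorphisms split, which is why the closure holds there), whereas $\tau_{\ge0}$ would require the cokernel of $\im(d^{-1})\rarrow J'^0$, where $\im(d^{-1})$ is an arbitrary---and possibly inadmissible---subobject not lying in $\sJ$; even for $\sA$ abelian and $\sJ$ the injectives this quotient is generally not injective, so the resulting complex would fail to be a complex in $\sJ$ as the statement demands. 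Nor can the truncation be dodged: the fraction $t$ you import from the localization is, up to the standard manipulation, of the form $L^\bu\rarrow\cone(N^\bu\to L^\bu)$ with $N^\bu$ acyclic, and that cone genuinely acquires nonzero terms in negative degrees. (There are secondary costs as well: the fractions calculus presupposes that acyclic complexes form a thick subcategory of $\Hot^+(\sA)$, and the lemma is later applied with $\sA$ replaced by an exact category of complexes to obtain the bicomplex version with its explicit homotopies $h\:K^{j,i}\rarrow J^{j-1,i}$, which compounds the machinery.)

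The workable proof is the one you relegate to your final clause. The paper constructs $J^\bu$, the quasi-isomorphism $L^\bu\rarrow J^\bu$, and the contracting homotopy $h\:K^j\rarrow J^{j-1}$ simultaneously, degree by degree: the map $K^0\rarrow L^0$, and at each later stage the difference of the two available composites out of $K^j$, is annihilated by precomposition with the previous differential, hence factors through the syzygy $Z^j=\im(K^{j-1}\rarrow K^j)$, which is admissible by exactness of $0\rarrow E\rarrow K^\bu$; one then pushes out along the admissible monomorphism $Z^j\rarrow K^j$ and embeds the result into an object of $\sJ$. This construction never leaves degrees~$\ge0$, so no truncation, localization, or cohomology objects are ever needed, and it transfers intact to the exact category of complexes where the lemma is actually used.
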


\begin{proof}
 Part~(a) follows immediately by induction from the condition that
any object of $\sA$ is the source of an admissible monomorphism into
an object of~$\sJ$.
 To prove part~(b), one also proceeds step by step, starting from
choosing an admissible monomorphism from the fibered coproduct of
the objects $G$ and $K^0\oplus L^0$ over $E\oplus F$ into an object
$J^0\in\sJ$, then applying the same construction to the cokernels
of the morphisms $E\rarrow K^0$, \ $F\rarrow L^0$, and $G\rarrow J^0$
and the admissible monomorphisms from the former two to the objects
$K^1$ and $L^1$, etc.

 The proof of part~(c) is also a step-by-step construction procedure.
 Let $Z^j$ denote the images of the morphisms $K^{j-1}\rarrow K^j$ in
the exact sequence $0\rarrow E\rarrow K^0\rarrow K^1\rarrow\dotsb$.
 Then the morphism $K^0\rarrow L^0$ factorizes through the admissible
epimorphism $K^0\rarrow Z^1$, so there is a morphism $Z^1\rarrow L^0$.
 Consider the fibered coproduct of the objects $K^1$ and $L^0$ over
the object $Z^1$ in the category $\sA$ and choose an admissible
monomorphism from it into an object $J^0\in\sJ$.
 Then there are a natural admissible monomorphism $L^0\rarrow J^0$
and a natural morphism $K^1\rarrow J^0$.
 Denote by $M^1$ the fibered coproduct of the objects $J^0$ and
$L^1$ over the object $L^0$ in the category~$\sA$.
 Then there are a natural admissible monomorphism $L^1\rarrow M^1$
and natural morphisms $J^0\rarrow M^1\rarrow L^2$ with a vanishing
composition in the category~$\sA$.
 The difference of the compositions $K^1\rarrow L^1\rarrow M^1$
and $K^1\rarrow J^0\rarrow M^1$ is annihilated by the composition
with the morphism $K^0\rarrow K^1$, so it factorizes through
the admissible epimorphism $K^1\rarrow Z^2$, providing a morphism
$Z^2\rarrow M^1$.
 Consider the fibered coproduct of the objects $K^2$ and $M^1$
over the object $Z^2$ in the category $\sA$ and choose an admissible
monomorphism from it into an object $J^1\in\sJ$.
 Then there are a natural admissible monomorphism $M^1\rarrow J^1$
and a morphism $K^2\rarrow J^1$ in the category~$\sA$.
 The composition $J^0\rarrow M^1\rarrow J^1$ provides a morphism
$J^0\rarrow J^1$ in the category~$\sJ$.
 Denote by $M^2$ the fibered coproduct of the objects $J^1$ and
$L^2$ over the object $M^1$ in the category~$\sA$, etc.

 To check part~(d), one applies part~(a) to the exact category of
short exact sequences in $\sA$ and its full subcategory of short
exact sequences in~$\sJ$.
 It suffices to show that the short exact sequence $0\rarrow E^1
\rarrow E^2\rarrow E^3\rarrow0$ is the source of an admissible
monomorphism into a short exact sequence of objects from~$\sJ$.
 For this purpose, one picks admissible monomorphisms $E^2\rarrow J^1$
and $E^3\rarrow J^3$ into objects $J^1$, $J^3\in\sJ$; then there is
an admissible monomorphism from our short exact sequence to
the split short exact sequence $0\rarrow J^1\rarrow J^1\oplus J^3
\rarrow J^3\rarrow0$.
\end{proof}

\begin{lem} \label{complex-resolutions}
\textup{(a)} For any complex $E^\bu$ in the category\/ $\sA$ there
exists an exact sequence of complexes
$$
 0\lrarrow E^\bu\lrarrow J^{0,\bu}\lrarrow J^{1,\bu}\lrarrow
 J^{2,\bu}\lrarrow\dotsb
$$
in the category\/ $\sA$ with the objects $J^{j,i}$ belonging to
the full subcategory\/ $\sJ\subset\sA$. \par
\textup{(b)} For any two morphisms of complexes $E^\bu\rarrow G^\bu$
and $F^\bu\rarrow G^\bu$ and two exact sequences of complexes\/
$0\rarrow E^\bu\rarrow K^{0,\bu}\rarrow K^{1,\bu}\rarrow\dotsb$
and\/ $0\rarrow F^\bu\rarrow L^{0,\bu}\rarrow L^{1,\bu}\rarrow\dotsb$
in the category\/ $\sA$ there exists an exact sequence of complexes\/
$0\rarrow G^\bu\rarrow J^{0,\bu}\rarrow J^{1,\bu}\rarrow\dotsb$ in
the category\/ $\sA$ together with two morphisms of bicomplexes
$K^{\bu,\bu}\rarrow J^{\bu,\bu}$ and $L^{\bu,\bu}\rarrow J^{\bu,\bu}$
forming a commutative diagram with the morphisms $E^\bu\rarrow G^\bu$,
\ $F^\bu\rarrow G^\bu$, \ $E^\bu\rarrow K^{\bu,\bu}$, and $F^\bu\rarrow
L^{\bu,\bu}$ and such that the objects $J^{j,i}$ belong to the full
subcategory\/ $\sJ\subset\sA$. \par
\textup{(c)} For any exact sequence of complexes\/ $0\rarrow E^\bu
\rarrow K^{0,\bu}\rarrow K^{1,\bu}\rarrow\dotsb$ and any complex of
complexes\/ $0\rarrow L^{0,\bu}\rarrow L^{1,\bu}\rarrow L^{2,\bu}\rarrow
\dotsb$ in the category\/ $\sA$, and for any morphism of bicomplexes
$K^{\bu,\bu}\rarrow L^{\bu,\bu}$ with a vanishing composition
$E^\bu\rarrow K^{\bu,\bu}\rarrow L^{\bu,\bu}$, there exists a complex
of complexes $0\rarrow J^{0,\bu}\rarrow J^{1,\bu}\rarrow J^{2,\bu}
\rarrow\dotsb$ in the category\/ $\sJ$ together with
a quasi-isomorphism $L^{\bu,\bu}\rarrow J^{\bu,\bu}$ of complexes along
the first grading~$j$ in the exact category of complexes along
the second grading~$i$ in the exact category\/ $\sA$ such that
the composition $K^{\bu,\bu}\rarrow L^{\bu,\bu}\rarrow J^{\bu,\bu}$ is
a contractible morphism of complexes of complexes with a contracting
homotopy $h\:K^{j,i}\rarrow J^{j-1,i}$ defined for all $j\ge0$
and $i\in\Z$. \par
\textup{(d)} For any short exact sequence of complexes\/ $0\rarrow
E^{1,\bu}\rarrow E^{2,\bu}\rarrow E^{3,\bu}\rarrow0$ in the category\/
$\sA$ there exists an exact sequence of short exact sequences of
complexes\/ $0\rarrow E^{\bu,\bu}\rarrow J^{\bu,0,\bu}\rarrow J^{\bu,1,\bu}
\rarrow\dotsb$ in the category\/ $\sA$ with the objects $J^{k,j,i}$
belonging to the full subcategory\/ $\sJ\subset\sA$. \par
\textup{(e)} For any exact complex $E^\bu$ in the category\/ $\sA$
there exists an exact sequence of exact complexes\/ $0\rarrow E^\bu
\rarrow J^{0,\bu}\rarrow J^{1,\bu}\rarrow\dotsb$ in the category\/ $\sA$
with the objects $J^{j,i}$ belonging to the full subcategory\/
$\sJ\subset\sA$ and the complexes $J^{j,\bu}$ exact in the exact
category\/~$\sJ$. \par
\textup{(f)} If in any of the parts~(a\+e) the original data
of a complex, a pair of morphisms of complexes, a morphism of
bicomplexes, or a short exact sequence of complexes consisted of
(bi)complex(es) bounded above, below, or on both sides along
the grading~$i$, then the bicomplex or the short exact sequence
of bicomplexes whose existence is asserted can be also chosen to be
bounded on the respective side(s) along the grading~$i$.
\end{lem}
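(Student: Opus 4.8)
The plan is to deduce all six parts from the corresponding statements of Lemma~\ref{object-resolutions}, applied not to the exact category $\sA$ itself but to the exact category $\sC(\sA)$ of (unbounded) complexes in $\sA$ with its termwise exact structure, together with the full subcategory $\sJ'\subset\sC(\sA)$ consisting of those complexes all of whose terms lie in $\sJ$. The subcategory $\sJ'$ is manifestly closed under extensions and under cokernels of admissible monomorphisms in $\sC(\sA)$, since these operations are computed termwise and $\sJ\subset\sA$ has the analogous closure properties. Thus the only hypothesis of Lemma~\ref{object-resolutions} that is not immediate is the existence, for every complex $E^\bu$, of an admissible monomorphism $E^\bu\rarrow J^\bu$ in $\sC(\sA)$ with $J^\bu\in\sJ'$; establishing this is the crux of the argument.

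To produce such a monomorphism I would choose, for each $i$, an admissible monomorphism $\epsilon^i\:E^i\rarrow Q^i$ into an object $Q^i\in\sJ$ (possible by hypothesis on $\sJ$), and then form the contractible ``disk'' complex $J^\bu$ with $J^i=Q^i\oplus Q^{i+1}$ and differential $(q,q')\mapsto(q',0)$. The map $e^i\:E^i\rarrow J^i$ sending $x$ to $(\epsilon^i x,\,\epsilon^{i+1}d_E^i x)$ is readily checked to be a morphism of complexes; and it is termwise an admissible monomorphism because it factors as the split monomorphism $E^i\rarrow E^i\oplus Q^{i+1}$, \ $x\mapsto(x,\epsilon^{i+1}d_E^i x)$, followed by the admissible monomorphism $\epsilon^i\oplus\id\:E^i\oplus Q^{i+1}\rarrow Q^i\oplus Q^{i+1}$. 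This supplies the missing hypothesis, and parts~(a), (b), and~(d) then follow verbatim from parts~(a), (b), and~(d) of Lemma~\ref{object-resolutions} read in $\sC(\sA)$ (with part~(d) applied in the exact category of short exact sequences of complexes). Part~(c) is likewise the content of Lemma~\ref{object-resolutions}(c) in $\sC(\sA)$: a ``contractible morphism of complexes'' there is precisely a null\+homotopic morphism of complexes\+of\+complexes, whose contracting homotopy is a degree~$-1$ morphism in $\sC(\sA)$, i.e.\ a family $h\:K^{j,i}\rarrow J^{j-1,i}$ commuting with the differential in~$i$, exactly as asserted.

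For part~(e) I would repeat the scheme inside the exact category of exact complexes in $\sA$ (the acyclic complexes, with the termwise exact structure inherited from $\sC(\sA)$), using as resolving subcategory the contractible --- hence exact --- complexes with terms in $\sJ$. The disk complexes $J^\bu$ constructed above are contractible and lie in $\sJ'$, and when $E^\bu$ is exact the cokernel of the admissible monomorphism $E^\bu\rarrow J^\bu$ is again exact (in a termwise short exact sequence of complexes, exactness of two of the three complexes forces exactness of the third), so the embedding is admissible in the category of exact complexes; Lemma~\ref{object-resolutions}(a) then applies. Finally, part~(f) is obtained by tracking supports: the disk construction turns a complex vanishing in degrees $>M$ (resp.\ $<N$) into one vanishing in degrees $>M$ (resp.\ $<N-1$), and all the constructions of Lemma~\ref{object-resolutions} preserve the vanishing of terms along the grading~$i$ degreewise, so the asserted one\+ or two\+sided boundedness propagates through the resolutions.

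The main obstacle is the crux step isolated above --- embedding an arbitrary complex into a complex with $\sJ$\+terms as an \emph{admissible} monomorphism in $\sC(\sA)$; once the disk construction and the admissibility of $e^i$ are in hand, the remainder is a formal transcription of Lemma~\ref{object-resolutions} into the several exact categories ($\sC(\sA)$, its category of short exact sequences, and its category of exact complexes), the only real cost being the bookkeeping needed to match the ``quasi\+isomorphism\slash exactness along the grading~$j$'' language of the statement with the ambient exact structure of $\sC(\sA)$.
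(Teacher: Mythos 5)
Your reduction of parts~(a)--(e) to Lemma~\ref{object-resolutions} applied to the exact category of complexes in $\sA$ (with the termwise exact structure) and its full subcategory of complexes with terms in $\sJ$, with the crux being the admissible embedding of an arbitrary complex into the contractible ``disk'' complex $J^i=Q^i\oplus Q^{i+1}$, is exactly the paper's argument, down to the reading of part~(c) as contractibility in the category of complexes over $\sC(\sA)$ and the use of the disk complexes (which are split exact, hence exact already in~$\sJ$) for part~(e). (Your appeal in part~(e) to the two-out-of-three property of exactness in a termwise short exact sequence of complexes is at the same level of detail as the paper's own one-line treatment of~(e); in a completely general exact category this property does require an argument, but the paper glosses it equally.)

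Part~(f), however, fails as you state it in the bounded below (hence also the two-sided bounded) case. You correctly record that the disk construction extends a complex one degree to the left: if $E^i=0$ for $i<N$, then $J^{N-1}=Q^N$ need not vanish. But this loss recurs at \emph{every} step of the step-by-step resolution: the cokernel of $E^\bu\rarrow J^{0,\bu}$ is only supported in degrees $i\ge N-1$, its disk embedding produces $J^{1,\bu}$ supported in degrees $i\ge N-2$, and so on, so that $J^{j,\bu}$ is supported in degrees $i\ge N-j-1$. Since the resolution is infinite in the direction of the grading~$j$, the resulting bicomplex $J^{\bu,\bu}$ is \emph{not} bounded below along the grading~$i$ --- which is precisely what part~(f) asserts, and what the derived functor construction needs for the symbols $\st=\b$, $+$, $\abs+$ (an unbounded-below total complex is not even an object of the relevant homotopy category). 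Your concluding claim that ``boundedness propagates through the resolutions'' is thus incompatible with your own accounting of the one-degree loss. This is why the paper's proof explicitly instructs one to \emph{modify} the embedding so as to keep $J^\bu$ within the boundaries of $E^\bu$: simply truncate the disk, setting $J^i=0$ for $i<N$ and $J^i=Q^i\oplus Q^{i+1}$ for $i\ge N$; the same formula $x\longmapsto(\epsilon^ix,\.\epsilon^{i+1}d_E^ix)$ still defines a termwise admissible monomorphism, so parts~(a)--(d) go through with no loss of support. For part~(e) the truncated disk is no longer exact at degree~$N$, but for an exact complex $E^\bu$ with $E^i=0$ for $i<N$ the differential $d^N\:E^N\rarrow E^{N+1}$ is an admissible monomorphism, so one may instead take $J^N=Q^{N+1}$ and $J^i=Q^i\oplus Q^{i+1}$ for $i>N$, with the embedding $\epsilon^{N+1}d^N$ in degree~$N$; this complex is again split exact and stays within the support of~$E^\bu$. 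With this repair your proof coincides with the paper's.
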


\begin{proof}
 Parts~(a\+d) are obtained from parts~(a\+d) of
Lemma~\ref{object-resolutions} by substituting the exact category
of complexes in $\sA$ and its full subcategory of complexes in
$\sJ$ in place of the exact category $\sA$ and its full
subcategory~$\sJ$.
 One only has to show that any complex $E^\bu$ in $\sA$ is the source
of an admissible monomorphism into a complex $J^\bu$ with the terms
from~$\sJ$.
 Here it suffices to pick admissible monomorphisms $E^i\rarrow K^i$
into some objects $K^i\in \sJ$ and set $J^\bu$ to be the split exact
complex with the terms $J^i=K^i\oplus K^{i+1}$.
 The same construction of an admissible monomorphism of a complex
in $\sA$ into an exact complex in $\sJ$ allows to prove part~(e).
 Part~(f) is clear from the above argument; one only has to
modify the construction of the admissible monomorphism
$E^\bu\rarrow J^\bu$ slightly in the case of complexes bounded below
or from both sides in order to keep the complex $J^\bu$ within
the boundaries of the complex~$E^\bu$.
\end{proof}

 Now we can construct the derived functor $\boR\Psi$.
 Let $\st$ be one of the symbols $\b$, $+$, $-$, $\varnothing$,
$\abs+$, $\abs-$, $\co$, $\ctr$, or~$\abs$; and let
$\sC^\st(\sA)$ denote the category of (respectively bounded or
unbounded) complexes (and closed morphisms of degree zero between
them) in the category~$\sA$.
 Given a complex $E^\bu\in\sC^\st(\sA)$, we resolve it by a bicomplex
$J^{\bu,\bu}$ with the terms belonging to the subcategory
$\sJ\subset\sA$ as in Lemma~\ref{complex-resolutions}(a), apply
the functor $\tau_{\le d}\Psi$ to every complex $J^{j,\bu}$ obtaining
a bicomplex $\tau_{\le d}\Psi(J^{\bu,\bu})$ concentrated in the segment
of degrees $[0,d]$ along the grading~$j$, and totalize this bicomplex
in order to obtain the complex $\boR\Psi(E^\bu)\in\sD^\st(\sB)$.
 When the symbol~$\st$ presumes bounded complexes, we choose
the bicomplex $J^{\bu,\bu}$ to be bounded on the respective side(s)
along the grading~$i$, as in Lemma~\ref{complex-resolutions}(f).
 This defines the action of the functor $\boR\Psi$ on the objects
$E^\bu$ of the category of complexes $\sC^\st(\sA)$.

 To construct the image of a morphism $E^\bu\rarrow F^\bu$ in
the category of complexes $\sC^\st(\sA)$ under the functor $\boR\Psi$,
one applies Lemma~\ref{complex-resolutions}(b) to the pair of morphisms
of complexes $E^\bu\rarrow F^\bu$ and $\id: F^\bu\rarrow F^\bu$ together
with the chosen resolutions $E^\bu\rarrow K^{\bu,\bu}$ and $F^\bu
\rarrow L^{\bu,\bu}$ of the complexes $E^\bu$ and $F^\bu$ by bicomplexes
$K^{\bu,\bu}$ and $L^{\bu,\bu}$ in the exact category $\sJ$.
 Then one uses the condition that the functor $\Psi$ takes
quasi-isomorphisms in $\sC_\sA^{\ge0}(\sJ)$ to quasi-isomorphisms in
$\sC^{\ge0}(\sB)$ in order to conclude that the induced morphism between
the totalizations of the bicomplexes $\tau_{\le d}\Psi(L^{\bu,\bu})$
and $\tau_{\le d}\Psi(J^{\bu,\bu})$ has an absolutely acyclic cone.

 Proving that the morphism $\boR\Psi(E^\bu)\rarrow\boR\Psi(F^\bu)$
in the derived category $\sD^\st(\sB)$ obtained in this way depends
only on the original morphism $E^\bu\rarrow F^\bu$ in the category
$\sC^\st(\sA)$ and not on any additional choices, and also that
compositions of morphisms in the category $\sD^\st(\sB)$ are assigned
to compositions of morphisms in the category of complexes
$\sC^\st(\sA)$, reduces to checking that a morphism of resolving
bicomplexes $K^{\bu,\bu}\rarrow L^{\bu,\bu}$ in the category $\sJ$ forming
a commutative diagram with a zero morphism of complexes
$E^\bu\rarrow F^\bu$ in the category $\sA$ induces a vanishing morphism
between the totalizations of bicomplexes
$\tau_{\le d}\Psi(K^{\bu,\bu})\rarrow\tau_{\le d}\Psi(L^{\bu,\bu})$
in the derived category $\sD^\st(\sB)$.
 Here one has to apply Lemma~\ref{complex-resolutions}(c) and
recall that the functor $\Psi\:\sC^+(\sJ)\rarrow\sC^+(\sB)$, having
a DG\+functor structure, transforms contracting homotopies to
contracting homotopies and preserves the equations they may satisfy.

 A morphism of complexes in the category $\sA$ is homotopic to zero
if and only if it factorizes through a cone of an identity morphism of
complexes, i.~e., through a complex of the form $\dotsb\rarrow E^{i-1}
\oplus E^i\rarrow E^i\oplus E^{i+1}\rarrow\dotsb$.
 Complexes of this form are easily seen to be taken to zero by
the functor $\boR\Psi$, which is therefore a well-defined functor
from the homotopy category $\Hot^\st(\sA)$ to the derived category
$\sD^\st(\sB)$.

 Checking that the functor $\boR\Psi\:\Hot^\st(\sA)\rarrow\sD^\st(\sB)$
so obtained is triangulated reduces to another application of
Lemma~\ref{complex-resolutions}(b), guaranteeing any closed morphism
of complexes $E^\bu\rarrow F^\bu$ in the category $\sA$ can be included
into a commutative diagram with resolution morphisms
$E^\bu\rarrow K^{\bu,\bu}$ and $F^\bu\rarrow L^{\bu,\bu}$, and
a morphism of resolving bicomplexes $K^{\bu,\bu}\rarrow L^{\bu,\bu}$
in the category~$\sJ$.
 Since the cone of the closed morphism of bicomplexes $K^{\bu,\bu}
\rarrow L^{\bu,\bu}$ (taken along the grading~$i$) is a resolution
of the cone of the closed morphism $E^\bu\rarrow F^\bu$, and
the functor $\Psi$, being a DG\+functor, preserves the cones of
closed morphisms, the functor $\boR\Psi$ takes cones to cones.

 It remains to show that the functor $\boR\Psi$ factorizes through
the derived category $\sD^\st(\sA)$.
 Here one has to consider derived categories of the first and of
the second kind separately.
 One uses Lemma~\ref{complex-resolutions}(e) in the former case
and Lemma~\ref{complex-resolutions}(d) in the latter one, together with
the assumption that $\Psi\:\sC_\sA^{\ge0}(\sJ)\rarrow\sC^{\ge0}(\sB)$ is
an exact functor between exact categories.

 Notice that the derived functor $\boR\Psi\:\sD^\st(\sA)\rarrow
\sD^\st(\sB)$ that we have constructed does not depend on the choice
of an integer~$d$ satisfying the condition that
$\Psi(\sC_\sA^{\ge0}(\sJ))$ is contained in $\sC^{\ge0}(\sB)^{\le d}$.
 Indeed, for any bicomplex $J^{\bu,\bu}$ with $J^{\bu,i}\in
\sC_\sA^{\ge0}(\sJ)$ for all $i\in\Z$ and any integer $d'\ge d$
a cone of the natural morphism between the totalizations of bicomplexes
$\tau_{\le d}\Psi(J^{\bu,\bu})\rarrow\tau_{\le d'}\Psi(J^{\bu,\bu})$ is
absolutely acyclic.

\bigskip

 In the rest of this appendix, our aim is to establish an adjunction
property of the left and right derived functors provided by the above
construction.
 For any additive category $\sE$, we denote by $\sC^-(\sE)$
the DG\+category of bounded above complexes in~$\sE$.
 When $\sE$ is an exact category, the subcategory $\sC^{\le0}(\sE)
\subset\sC^-(\sE)$ of all the complexes $\dotsb\rarrow E^{-2}\rarrow
E^{-1}\rarrow E^0\rarrow0$ and closed morphisms of degree zero between
them is endowed with a natural exact category structure where
a short sequence of complexes is exact if and only if it is
termwise exact in~$\sE$.

 Let $\sA$ and $\sB$ be two exact categories containing the images
of idempotent endomorphisms of their objects.
 Let $\sJ\subset\sA$ be a full subcategory, closed under extensions
and the passages to the cokernels of admissible monomorphisms in $\sA$
and such that any object of $\sA$ is the source of an admissible
monomorphism into an object of~$\sJ$.
 Similarly, let $\sP\subset\sB$ be a full subcategory, closed under
extensions and the passages to the kernels of admissible epimorphisms
and such that any object of $\sB$ is the target of an admissible
epimorphism from an object of~$\sP$.
 The full subcategories $\sJ$ and $\sP$ inherit the exact category
structures of the ambient categories $\sA$ and~$\sB$.

 Denote by $\sC_\sP^{\le 0}(\sB)$ the full subcategory in
$\sC^{\le0}(\sP)$ consisting of all the complexes $\dotsb\rarrow P^{-2}
\rarrow P^{-1}\rarrow P^0\rarrow0$ in $\sP$ for which there exits
an object $B\in\sB$ together with a morphism $P^0\rarrow B$ such that
the sequence $\dotsb\rarrow P^{-1}\rarrow P^0\rarrow B\rarrow0$ is
exact in~$\sB$.
 The full subcategory $\sC_\sB^{\le0}(\sP)$ is closed under extensions
and the passages to the kernels of admissible epimorphisms in
the exact category $\sC^{\le0}(\sP)$, so it inherits the exact category
structure.

 Let $\sC^{\le0}(\sA)^{\ge -d}\subset\sC^{\le0}(\sA)$ denote the full
subcategory consisting of all the complexes $\dotsb\rarrow A^{-1}
\rarrow A^0\rarrow 0$ in $\sA$ such that the sequence $\dotsb\rarrow
A^{-d-2}\rarrow A^{-d-1}\rarrow A^{-d}$ is exact in~$\sA$.
 Being closed under extensions and the passages to the kernels of
admissible epimorphisms, the full subcategory $\sC^{\le0}(\sA)^{\ge -d}$
inherits the exact category structure of the category $\sC^{\le0}(\sA)$.
 Denote by $\sC^{[-d,0]}(\sA)$ the exact category of finite complexes
$A^{-d}\rarrow\dotsb\rarrow A^0$ in~$\sA$.
 Then there is an exact functor of canonical truncation
$\tau_{\ge-d}\:\sC^{\le0}(\sA)^{\ge-d}\rarrow\sC^{[-d,0]}(\sA)$.

 As above, let $\Psi\:\sC^+(\sJ)\rarrow\sC^+(\sB)$ be a DG\+functor
taking quasi-isomorphisms of complexes belonging to
$\sC_\sA^{\ge0}(\sJ)$ to quasi-isomorphisms of complexes in the exact
category~$\sB$.
 Assume that the restriction of the functor $\Psi$ to the subcategory
$\sC_\sA^{\ge0}(\sJ)\subset\sC^+(\sJ)$ is an exact functor between
exact categories $\Psi\:\sC_\sA^{\ge0}(\sJ)\rarrow
\sC^{\ge0}(\sB)^{\le d}$.
 Then the above construction provides the right derived functors
$\boR\Psi\:\sD^\st(\sA)\rarrow\sD^\st(\sB)$.

 Similarly, let $\Phi\:\sC^-(\sP)\rarrow\sC^-(\sA)$ be a DG\+functor
taking quasi-isomorphisms of complexes belonging to
$\sC_\sB^{\le0}(\sP)$ to quasi-isomorphisms of complexes in the exact
category~$\sA$.
 Assume that the restriction of the functor $\Phi$ to the subcategory
$\sC_\sB^{\le0}(\sP)\subset\sC^-(\sP)$ is an exact functor between
exact categories $\Phi\:\sC_\sB^{\le0}(\sP)\rarrow
\sC^{\le0}(\sA)^{\ge-d}$.
 Composing the restriction of the functor $\Phi$ with the functor
of canonical truncation, we obtain an exact functor
$$
 \tau_{\ge-d}\Phi\:\sC^{\le0}_\sB(\sP)\lrarrow\sC^{[-d,0]}(\sA).
$$
 The construction dual to the above provides a left derived functor
$$
 \boL\Phi\:\sD^\st(\sB)\lrarrow\sD^\st(\sA)
$$
for any symbol $\st=\b$, $+$, $-$, $\varnothing$, $\abs+$, $\abs-$,
or~$\abs$.
 When the exact categories $\sA$ and $\sB$ have exact functors of
infinite direct sum (resp., product), the full subcategory
$\sP\subset\sB$ is closed under infinite direct sums (resp.,
products), and the functor $\Phi$ preserves infinite direct sums
(resp., products), there is also the derived functor $\boL\Phi$
acting between the coderived categories $\sD^\co$ (resp.,
the contraderived categories $\sD^\ctr$) of the exact categories
$\sB$ and~$\sA$.

 Denote by $\sC(\sA)$ and $\sC(\sB)$ the DG\+categories of unbounded
complexes in the additive categories $\sA$ and~$\sB$.
 Suppose that the DG\+functors $\Psi\:\sC^+(\sJ)\rarrow\sC^+(\sB)$
and $\Phi\:\sC^-(\sP)\rarrow\sC^-(\sA)$, viewed as partially defined
DG\+functors between the DG\+categories $\sC(\sA)$ and $\sC(\sB)$,
are partially adjoint to each other.
 In other words, this means that for any complexes $J^\bu\in\sC^+(\sJ)$
and $P^\bu\in\sC^-(\sP)$ there is an isomorphism
$$
 \Hom^\bu_{\sC(\sA)}(\Phi(P^\bu),J^\bu)\simeq
 \Hom^\bu_{\sC(\sB)}(P^\bu,\Psi(J^\bu))
$$
of complexes of morphisms in the DG\+categories $\sC(\sA)$ and
$\sC(\sB)$, functorial in the complexes $J^\bu$ and~$P^\bu$.
 Let us show that in this case the derived functor $\boL\Phi$ is
left adjoint to the derived functor $\boR\Psi$.

 Our approach consists in constructing the adjunction morphisms
$$
 \boL\Phi\circ \boR\Psi\rarrow\Id
 \quad\text{and}\quad
 \Id\rarrow\boR\Psi\circ\boL\Phi.
$$
 Since the functors $\Phi$ and $\Psi$, being DG\+functors, commute
with totalizations of finite complexes of complexes in $\sC^-(\sP)$
and $\sC^+(\sJ)$, the question essentially reduces to the case
of single objects of the categories $\sA$ and $\sB$ viewed as
one-term complexes along the grading~$i$.
 So in the reasoning below we ignore the (generally speaking, doubly
unbounded) grading~$i$ altogether and concentrate on the (bounded
on one side or finite) gradings created by the resolution
procedures.

 Let $0\rarrow J^0\rarrow J^1\rarrow\dotsb$ be a right resolution of
an object $A\in\sA$ by objects $J^j\in\sJ$.
 Denote by $B^1\rarrow\dotsb\rarrow B^d$ the finite complex
$\tau_{\le d}\Psi(J^\bu)$ in the category~$\sB$.
 Let $\dotsb\rarrow P^{-1,\bu}\rarrow P^{0,\bu}\rarrow B^\bu\rarrow0$
be an exact sequence of complexes in the category $\sB$ with
the terms $P^{k,j}\in\sP$ concentrated in the interval of gradings
$0\le j\le d$.
 Then the total complex of the bicomplex
$\tau_{\ge -d}\Phi(P^{\bu,\bu})$, where both the functor $\Phi$ and
the canonical truncation are applied to complexes along
the grading~$k$, represents the object $\boL\Phi(\boR\Psi(A))$
in the derived category $\sD^\st(\sA)$.

 The composition $P^{\bu,\bu}\rarrow B^\bu\rarrow\Psi(J^\bu)$ is
a closed morphism from the total complex of the bicomplex
$P^{\bu,\bu}$ to the complex $\Psi(J^\bu)$ in the category~$\sB$.
 Consider the corresponding closed morphism of complexes
$\Phi(P^{\bu,\bu})\rarrow J^\bu$ in the category~$\sA$.
 Since the bicomplex $P^{k,j}$ is concentrated in the cohomological
gradings $0\le j\le d$ and $k\le 0$, while the complex $J^\bu$
is concentrated in the cohomological gradings $j\ge0$, the latter
morphism factorizes through the admissible epimorphism between
the totalizations of bicomplexes $\tau_{\ge -d}\Phi(P^{\bu,\bu})\rarrow
\Phi(P^{\bu,\bu})$ and the admissible monomorphism of complexes
$\tau_{\le d}J^\bu\rarrow J^\bu$.
 We have obtained a closed morphism $\tau_{\ge -d}\Phi(P^{\bu,\bu})
\rarrow\tau_{\le d}J^\bu$ of complexes in the category $\sA$
representing the desired adjunction morphism
$\boL\Phi(\boR\Psi(A))\rarrow A$.

 The adjunction morphism $\id\rarrow\boR\Psi\circ\boL\Phi$ of
functors on the category $\sD^\st(\sB)$ is constructed similarly.
 Checking that the compositions $\boR\Psi\rarrow\boR\Psi\circ\boL\Phi
\circ\boR\Psi\rarrow\boR\Psi$ and $\boL\Phi\rarrow
\boL\Phi\circ\boR\Psi\circ\boL\Phi\rarrow\boL\Phi$
are the identity morphisms is straightforward.

\bigskip

\end{document}